\def\ot{\omega^\theta}
\def\ur{u^r}
\def\ut{u^\theta}
\def\uz{u^z}
\let\ve=\varepsilon
\let\pa=\partial
\let\wt=\widetilde
\let\f=\frac
\let\D=\Delta
\def\dive{\mathop{\rm div}\nolimits}
\def\curl{\mathop{\rm curl}\nolimits}
\newcommand{\andf}{\quad\text{and}\quad}
\newcommand{\with}{\quad\text{with}\quad}
\def\cA{{\mathcal A}}
\def\cB{{\mathcal B}}
\def\cD{{\mathcal D}}
\def\cF{{\mathcal F}}
\def\cG{{\mathcal G}}
\def\N{\mathop{\mathbb N\kern 0pt}\nolimits}
\def\R{\mathop{\mathbb R\kern 0pt}\nolimits}
\newcommand{\Rmnum}[1]{\uppercase\expandafter{\romannumeral #1} }
\def\eqdef{\buildrel\hbox{\footnotesize def}\over =}
\newtheorem{thm}{Theorem}[section]
\newtheorem{lem}{Lemma}[section]
\newtheorem{rmk}{Remark}[section]
\newtheorem{cor}{Corollary}[section]
\newtheorem{prop}{Proposition}[section]
\numberwithin{equation}{section}
\begin{document}
\title[Long-time asymptotics of axisymmetric NSE]
{Long-time asymptotics of axisymmetric Navier-Stokes equations
in critical spaces}

\author[Y. Liu]{Yanlin Liu}
\address[Y. Liu]{School of Mathematical Sciences,
Laboratory of Mathematics and Complex Systems,
MOE, Beijing Normal University, 100875 Beijing, China.}
\email{liuyanlin@bnu.edu.cn}

\date{\today}

\maketitle

\begin{abstract} We prove that any global strong solution
to axisymmetric Navier-Stokes equations must eventually become small.
In particular, the limits of $\|\omega^\theta(t)/r\|_{L^1}$ and $\|u^\theta(t)/\sqrt r\|_{L^2}$ are all $0$ as $t$ tends to infinity.
Then by using these, we can refine a series of decay estimates.
In particular, for the global axisymmetric solutions we know till now,
namely the axisymmetric without swirl or with small swirl ones,
these decay estimates hold.
But our result here do not require any smallness conditions beforehand,
thus more general.
\end{abstract}

Keywords: Axisymmetric Navier-Stokes equations, mild solutions, long-time asymptotics,
critical spaces, decay estimates.


\section{Introduction}\label{Secintro}
In this paper, we investigate the long-time  behavior
of global solutions to axisymmetric Navier-Stokes equations.
In general, Navier-Stokes equations (N-S for short) in $\R^3$ reads
\begin{equation}\label{1.1}
\left\{
\begin{array}{l}
\displaystyle \pa_t u+u\cdot\nabla u-\Delta u+\nabla P=0,\qquad(t,x)\in\R^+\times\R^3\\
\displaystyle \dive u=0,\\
\displaystyle u|_{t=0} =u_0,
\end{array}
\right.
\end{equation}
where $u(t,x)=(u^1,u^2,u^3)$ stands for the velocity field and $P$
the scalar pressure function of the fluid, which guarantees the
divergence-free condition of the velocity field.

We recall that except the initial data with some special structure,
it is still a big open problem whether or not the system \eqref{1.1} has a
unique global solution with large initial data.
In this paper, we restrict ourselves to the axisymmetric solutions:
$$u(t,x)=u^r(t,r,z)e_r+u^\theta(t,r,z)e_\theta+u^z(t,r,z)e_z,$$
where $(r,\theta,z)$ denotes the cylindrical coordinates in $\R^3$
so that $x=(r\cos\theta,r\sin\theta,z)$,  and
$$e_r=(\cos\theta,\sin\theta,0),\ e_\theta=(-\sin\theta,\cos\theta,0),\ e_z=(0,0,1),\ r=\sqrt{x_1^2+x_2^2}.$$
Then in this case, we can reformulate \eqref{1.1} in cylindrical coordinates as:
\begin{equation}\label{1.2}
\left\{\begin{split}
& \pa_t u^r+(u^r\pa_r+u^z\pa_z) u^r-(\pa_r^2+\pa_z^2+\frac 1r\pa_r-\frac{1}{r^2})u^r-\frac{(u^\theta)^2}{r}+\pa_r P=0,\\
& \pa_t \ut+(u^r\pa_r+u^z\pa_z) \ut-(\pa_r^2+\pa_z^2+\frac 1r\pa_r-\frac{1}{r^2})u^\theta+\frac{u^r u^\theta}{r}=0,\\
& \pa_t u^z+(u^r\pa_r+u^z\pa_z) u^z-(\pa_r^2+\pa_z^2+\frac 1r\pa_r)u^z+\pa_z P=0,\\
& \pa_r u^r+\frac 1r u^r+\pa_z u^z=0,\\
& u|_{t=0} =u_0.
\end{split}\right.
\end{equation}

It is a celebrated result that for the
axisymmetric without swirl case (which means $\ut=0$),
Ladyzhenskaya \cite{La} and independently Ukhovskii
and Yudovich \cite{UY} proved the global well-posedness of \eqref{1.2}.
And later Ne\v{c}as et al. \cite{LMNP} gave a simpler proof.

However, the global well-posedness of \eqref{1.2} with arbitrary
axisymmetric initial data whose swirl part is non-trivial is still open.
To the best of our knowledge, so far we can only establish global well-posedness
when $\ut_0$ is sufficiently small in some sense.
There are numerous works concerning this,
here we only list \cite{CL02, Yau1, Yau2, Chen, Lei, Liu, LZ, ZZT2} for example.

We mention that by denoting $\widetilde{u}\eqdef u^r e_r+u^z e_z$,
it is easy to check that
$$\dive \wt{u}=0 \andf \curl \wt{u} =\omega^\theta e_\theta.$$
Solving this elliptic system shows that $u^r$ and $u^z$ can be uniquely
determined by $\ot$. Hence sometimes it is convenient to rewrite \eqref{1.2} by using
$\ot$ and $\ut$ as:
\begin{equation}\label{otut}
\left\{\begin{split}
&\pa_t \ot+(u^r\pa_r+u^z\pa_z)\ot-(\pa_r^2+\pa_z^2
+\frac 1r\pa_r-\frac{1}{r^2})\omega^\theta
-\frac{u^r \omega^\theta}{r}-\frac{2u^\theta \pa_z u^\theta}{r}=0, \\
&\pa_t \ut+(u^r\pa_r+u^z\pa_z) \ut-(\pa_r^2+\pa_z^2
+\frac 1r\pa_r-\frac{1}{r^2})u^\theta+\frac{u^r u^\theta}{r}=0,\\
&(\ot,\ut)|_{t=0} =(\ot_0, \ut_0).
\end{split}\right.
\end{equation}

In the following, let us focus on the long-time behavior
for the axisymmetric solutions.
First, for the special case without swirl,
Gallay and \v{S}ver\'ak \cite{GS}
proved the global well-posedness of \eqref{otut} with $\ut_0=0$
and $\ot_0\in L^1(\Omega)$.
Moreover, this solution satisfies the long-time asymptotics
\begin{equation}\label{longtimeGS}
\|\ot(t)\|_{L^1(\Omega)}\rightarrow0\quad\text{as}\quad t\rightarrow\infty.
\end{equation}
Here they equip the half-plane $\Omega=\{(r,z)|r>0,z\in\R\}$
with measure $drdz$. Precisely, for any $p\in[1,\infty[$, they denote by $L^p(\Omega)$
the space of measurable function $f$ which verifies
$$
\|f\|_{L^p(\Omega)}\eqdef\bigl(\int_\Omega|f(r,z)|^p drdz\bigr)^{\frac 1p}<\infty.$$
This $L^p(\Omega)$ norm emphasize the similarity between
the axisymmetric case and the 2-D case.
As a comparison, let us recall that the standard 3-D Lebesgue measure is
$rdrdz$.

We remark that \eqref{longtimeGS} can not be derived
from a direct energy estimate or convolution inequalities,
see Remark $\ref{rmk1.2}$ below.
Its proof deeply relies on the fact that
$\omega^\theta/r$ satisfies
\begin{equation}\label{otr1.5}
\pa_t \f{\ot}{r}+(u^r\pa_r+u^z\pa_z) \f{\ot}{r}
-(\Delta+\frac 2r\pa_r)\f{\ot}{r}=0,
\end{equation}
so that we can use the strong maximum principle and Hopf's lemma to prove
that the map $t\mapsto\|\ot(t)\|_{L^1(\Omega)}$ is strictly decreasing,
and hence must have a non-negative limit as $t\rightarrow\infty$.
Then by using a blow-up analysis, they can deduce
that this limit must be $0$.

However, as long as $\ut$ is not identically zero, the elliptic structure
in \eqref{otr1.5} is destroyed. Thus the strategy in \cite{GS} fails
to prove similar asymptotics as \eqref{longtimeGS} for general
axisymmetric solutions. Even for small initial data,
in \cite{LZ} Zhang and I can only prove
\begin{equation}\label{LZdecay}
\sup_{t\in\R^+}\Bigl(t^{1-\frac 1p}\|\ot(t)\|_{L^p(\Omega)}
+t^{1-\frac 3{2\ell}}\|\ot(t)\|_{L^\ell}
+t^{\frac12-\frac 1q}\|\ut(t)\|_{L^q(\Omega)}\Bigr)<\infty,
\end{equation}
for any $p\in[1,\infty],~\ell\in\bigl[\f32,\infty\bigr]$
and $q\in[2,\infty]$. Clearly \eqref{LZdecay} only shows that
$\|\ot(t)\|_{L^1(\Omega)}$ is uniformly bounded in time,
but do not need to decay to $0$ as $t$ tends to infinity.

Our aim in this paper is to establish the same decay estimate
as \eqref{longtimeGS} for general axisymmetric global solutions,
which can be stated as follows:

\begin{thm}\label{thm1.1}
{\sl Consider \eqref{otut} with initial data
$\ot_0\in L^1(\Omega)\cap L^{\frac32},~\ut_0\in L^2(\Omega)$
with $r\ut_0\in L^\infty$. Suppose its associate strong solution
$(\ot,\ut)$ exists globally in time, i.e. there is no finite time singularity
\footnote{For N-S, a point $z_0=(t_0,x_0)$ is called regular
if $u$ is H\"older continuous in a neighborhood of $z_0$.
And the singular point is any point which is not regular.
We have many regularity criteria to rule out the formation of singularities,
such as the famous Ladyzhenskaya-Prodi-Serrin criteria.}.
Then this solution $(\ot,\ut)$ satisfies the following long-time asymptotics
\begin{equation}\label{thm1.1.3}
\lim_{t\rightarrow\infty}\|\ot(t)\|_{L^1(\Omega)\cap L^{\f32}}=0,\andf
\lim_{t\rightarrow\infty}\|\ut(t)\|_{L^2(\Omega)}=0.
\end{equation}
}\end{thm}

\begin{rmk}\label{rmk1.1}

Theorem \ref{thm1.1} can be seen as a weighted space version
of the decay results for general solutions to 3-D N-S
in Besov space proved in \cite{GDP}.
It means that all global
solutions to \eqref{1.2} will eventually become small,
and thus satisfy all the properties linked to small data theory.

In particular, the decay property
\eqref{thm1.1.3} holds for all the global solutions we have mentioned above,
including axisymmetric without swirl case or with small swirl case.
But Theorem \ref{thm1.1} does not require any smallness condition
for the initial data, thus more general.
\end{rmk}

\begin{rmk}\label{rmk1.2}

Let us temporarily focus on 2-D N-S,
which is expected to have similar
behavior as the axisymmetric ones.
The $L^1(\R^2)$ norm of the 2-D vorticity $\omega=\pa_1 u^2-\pa_2 u^1$
can only proved to be non-increasing in time,
but in general does not converge to zero.
Its long-time behavior is described by the self-similar
solutions called Oseen vortices, see \cite{GW1, GW2}.
In particular, if initially $\omega_0$ has a definite sign, then
$\|\omega(t)\|_{L^1(\R^2)}$ is a constant for all time.
Thus the decay for $\|\ot(t)\|_{L^1(\Omega)}$
in the axisymmetric case is unexpected in some sense.

On the other hand, when applying $L^1$ estimate,
the diffusion part only gives
\begin{equation}\label{diffL1}
-\int_{\R^3}\Delta f\cdot{\rm sgn}(f)\,dx\geq0,
\end{equation}
which does not benefit us as much as the $L^p$ estimate with
$1<p<\infty$, namely
$$-\int_{\R^3}\Delta f\cdot|f|^{p-2}\cdot f\,dx
\geq C_p\bigl\|\nabla|f|^{\f p2}\bigr\|_{L^2}^2.$$
One can see this more clearly if the system has elliptic structure to guarantee
the solution to have a definite sign. In this case, \eqref{diffL1} vanishes,
and the $L^1$ norm of the solution does not decay, but remains a constant.
This is the case for 2-D vorticity equation, heat equation, etc.

The decay for $\|\ot(t)\|_{L^1(\Omega)}$ here
deeply relies on the specific structure for axisymmetric N-S,
for instance the appearance of the damping term $r^{-2}\ot$ in the equation,
as well as the boundary conditions on $\{r=0\}$, so that
\eqref{diffL1} would give a boundary term on the artificial boundary
$\{r=0\}$. All these contribute to the decay for $\|\ot(t)\|_{L^1(\Omega)}$.

\end{rmk}

On the other hand, starting from the estimate \eqref{thm1.1.3},
a standard bootstrap argument gives:
\begin{cor}\label{cor1.2}
{\sl For any $p\in[1,\infty],~\ell\in\bigl[\f32,\infty\bigr]$
and $q\in[2,\infty]$,
the unique global solution in Theorem \ref{thm1.1} in addition satisfies
the following refined (comparing to \eqref{LZdecay}) decay estimates:
\begin{equation}
\lim_{t\rightarrow \infty}\Bigl(t^{1-\frac 1p}\|\ot(t)\|_{L^p(\Omega)}
+t^{1-\frac 3{2\ell}}\|\ot(t)\|_{L^\ell}
+t^{\frac12-\frac 1q}\|\ut(t)\|_{L^q(\Omega)}\Bigr)=0.
\end{equation}
}\end{cor}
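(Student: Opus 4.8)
The plan is to combine the eventual smallness furnished by Theorem~\ref{thm1.1} with the self-improving decay mechanism behind \eqref{LZdecay}, by restarting the solution at a large time and then exploiting scaling invariance. Fix a small $\ve>0$. By the asymptotics \eqref{thm1.1.2}, there exists a time $T_0=T_0(\ve)$ such that
\[
\|\ot(T_0)\|_{L^1(\Omega)\cap L^{\f32}}+\|\ut(T_0)\|_{L^2(\Omega)}<\ve .
\]
For $\ve$ smaller than the threshold of the small-data theory, this places $(\ot(T_0),\ut(T_0))$ squarely in the regime covered by \cite{LZ}.

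First I would regard $(\ot,\ut)$ on $[T_0,\infty[$ as the solution issued from the data $(\ot(T_0),\ut(T_0))$ at the initial time $T_0$; the uniqueness statement (Proposition~\ref{propunique}) guarantees that the solution produced by the small-data theory coincides with the original one. The essential feature I would extract from the analysis of \cite{LZ} is that the decay bound \eqref{LZdecay} is not merely finite but is controlled \emph{linearly} by the critical size of the data: running the contraction scheme in the small regime, each time-weighted norm is dominated by a universal constant times the initial smallness. Concretely, this yields for all $t>T_0$ and all $p\in[1,\infty]$, $\ell\in[\f32,\infty]$, $q\in[2,\infty]$,
\[
(t-T_0)^{1-\f1p}\|\ot(t)\|_{L^p(\Omega)}
+(t-T_0)^{1-\f3{2\ell}}\|\ot(t)\|_{L^\ell}
+(t-T_0)^{\f12-\f1q}\|\ut(t)\|_{L^q(\Omega)}\le C\ve ,
\]
with $C$ independent of both $\ve$ and $t$.

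Next I would transfer the decay measured from $T_0$ into decay measured from the origin. Since all the exponents $1-\f1p$, $1-\f3{2\ell}$, $\f12-\f1q$ are nonnegative on the stated ranges, for $t\ge 2T_0$ we have $t-T_0\ge t/2$, whence $t^{a}\le 2^{a}(t-T_0)^{a}$ for each such exponent $a\ge0$. Substituting into the previous bound gives, for all $t\ge 2T_0$,
\[
t^{1-\f1p}\|\ot(t)\|_{L^p(\Omega)}
+t^{1-\f3{2\ell}}\|\ot(t)\|_{L^\ell}
+t^{\f12-\f1q}\|\ut(t)\|_{L^q(\Omega)}\le 2\,C\,\ve .
\]
Consequently the $\limsup_{t\to\infty}$ of the left-hand side is at most $2C\ve$; since that left-hand side does not depend on $\ve$ and $\ve>0$ is arbitrary, the limit must be $0$, which is the asserted refinement of \eqref{LZdecay}.

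The main obstacle is the second step: one must be sure that the decay constant in \eqref{LZdecay} depends \emph{proportionally} on the smallness of the data rather than merely being finite. Since \cite{LZ} records the estimate as a uniform-in-time bound, I would revisit the contraction/iteration argument to confirm that each iterate carries a factor of the initial critical norm, so that the total is genuinely $O(\ve)$; this is typical of critical small-data theory but must be checked against the precise weighted Serrin-type norms in play here. A secondary technical point is the handling of the endpoints $p=\infty$, $\ell=\infty$, $q=\infty$, which I expect to reach by interpolating between the interior estimates already established rather than by estimating them directly.
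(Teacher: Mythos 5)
Your proposal is correct and follows essentially the same route the paper intends: the paper proves this corollary only by pointing to the ``standard bootstrap argument'' behind Theorem 1 of \cite{LZ}, namely restarting the solution at a large time $T_0$ where \eqref{thm1.1.2} makes the critical norms smaller than $\ve$, invoking the small-data decay theory with a constant proportional to the data, and shifting the time origin, exactly as you do. The one point you flag as needing verification --- that the decay constant is $O(\ve)$ rather than merely finite --- is in fact already supplied inside this paper by Proposition \ref{globalsmall}, whose bound $\|(\ot,\ut)\|_{X_\infty}\leq 2C_1\bigl(\|\ot_0\|_{L^{3/2}}+\|\ot_0\|_{L^1(\Omega)}+\|\ut_0\|_{L^2(\Omega)}\bigr)$ is linear in the data and whose endpoint components ($\|\ot\|_{L^1(\Omega)\cap L^{3/2}}$, $t\|\ot(t)\|_{L^\infty}$, $\|\ut\|_{L^2(\Omega)}$, $t^{1/2}\|\ut(t)\|_{L^\infty}$) give all the stated rates, endpoints included, by interpolation.
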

We refer the readers to the proof of Theorem $1$ in \cite{LZ}
for the details of this procedure. One can also find this argument
in \cite{FS15, GS} for the easier axisymmetric without swirl case.
Clearly Corollary \ref{cor1.2} refines the previously mentioned
estimate \eqref{LZdecay}.
\smallskip

Let us end this section with some notations we shall use throughout this paper.

\noindent{\bf Notations:}
We shall always denote $C$ to be a uniform constant
which may vary from line to line.
$a\lesssim b$ means that $a\leq Cb$, and $a\thicksim b$
means that both $a\leq Cb$ and $b\leq Ca$ hold.
For a Banach space $B$, we shall use the shorthand $L^p_T B$ for $\bigl\|\|\cdot\|_B\bigr\|_{L^p(0,T;dt)}$.
And we shall use $\widetilde{\nabla}=e_r\pa_r+e_z\pa_z$
and $\widetilde{u}=u^r e_r+u^z e_z $.

\section{Some Technical Lemmas}

\subsection{Axisymmetric Biot-Savart law}
The Biot-Savart law asserts that any divergence-free velocity field $u$
can be uniquely determined by its vorticity $\curl u$. Moreover, there holds
\begin{equation}\label{Biot}
\|\nabla u\|_{L^p}\leq C\frac{p^2}{p-1}\|\curl u\|_{L^p},\quad\forall\
1<p<\infty.
\end{equation}

For the special case when the velocity field is axisymmetric without swirl,
we can use $\omega^\theta$ to represent $\widetilde{u}=(u^r,u^z)$,
and the following estimates hold:

\begin{lem}[Proposition $2.4$ in \cite{GS}]\label{lemuruz}
{\sl Let $\alpha,~\beta\in[0,2]$
and $p,~q\in]1,\infty[$ satisfy
$$0\leq\beta-\alpha<1,\quad
\text{and}\quad \frac 1q=\frac 1p-\frac {1+\alpha-\beta}2.$$
If $r^\beta \ot\in L^p(\Omega)$, then $r^\alpha \widetilde{u}\in L^q(\Omega)$,
and there holds
$$\|r^\alpha \widetilde{u}\|_{L^q(\Omega)}\leq C\|r^\beta \ot\|_{L^p(\Omega)}.$$
}\end{lem}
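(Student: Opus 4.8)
The plan is to recast the statement as a weighted $L^p(\Omega)\to L^q(\Omega)$ bound for the linear integral operator given by the explicit formula \eqref{axiBiotSavartformula}, and then to establish that bound as a weighted Hardy--Littlewood--Sobolev inequality. Writing $x=(r,z)$, $y=(\bar r,\bar z)\in\Omega$ and inserting the weights, one has
$$r^\alpha\wt u(r,z)=\int_\Omega\mathcal{K}(x,y)\,\bigl(\bar r^{\,\beta}\ot\bigr)(\bar r,\bar z)\,d\bar r\,d\bar z,\qquad \mathcal{K}\eqdef r^\alpha\bar r^{-\beta}\bigl(G_r\,e_r+G_z\,e_z\bigr).$$
Since $\xi^2=|x-y|^2/(r\bar r)$ is scale invariant and $F,F'$ depend on $\xi$ only, the kernels $G_r,G_z$ are homogeneous of degree $-1$ under $(x,y)\mapsto(\lambda x,\lambda y)$; hence $\mathcal K$ is homogeneous of degree $-(1+\beta-\alpha)$ and depends on $z,\bar z$ only through $z-\bar z$. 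On the two--dimensional half--plane $\Omega$ this homogeneity is exactly the scaling identity $\f1q=\f1p-\f{1+\alpha-\beta}2$, so the inequality is scaling critical; since $\beta-\alpha<1$ forces $q>p$, the real content is a gain of integrability.

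First I would record the behaviour of $F$: as $s\to0^+$ one has $F(s)\sim\f12\log\f1s$ and $F'(s)\sim-\f1{2s}$, whereas as $s\to\infty$ the quantities $F(s)$, $F'(s)$ and $F(s)-2sF'(s)$ all decay like negative powers of $s$. Substituting these into the formulas for $G_r,G_z$ and using $|z-\bar z|,|r-\bar r|\le|x-y|=\sqrt{r\bar r}\,\xi$ gives a pointwise bound of the schematic form
$$|G_r|+|G_z|\le\f Cr\Bigl(\Phi(\xi)+\sqrt{\tfrac{\bar r}{r}}\,\Psi(\xi)\Bigr),$$
where $\Phi(\xi)\lesssim\xi^{-1}$ and $\Psi(\xi)\lesssim 1+\log\f1\xi$ for $\xi\le1$, while both decay like a negative power of $\xi$ for $\xi\ge1$. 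Thus $|\mathcal K(x,y)|$ is dominated by nonnegative kernels of the correct homogeneity that carry the power weight $r^{\alpha-\beta}$ (a power of the distance to the axis $\{r=0\}$) and reproduce the $2$-D Biot--Savart singularity $|x-y|^{-1}$ near the diagonal while decaying fast off it.

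I would then prove the core estimate by splitting the $y$--integration into the near--diagonal region $\{\xi\le1\}$ and the far region $\{\xi>1\}$. On $\{\xi>1\}$ the kernel is bounded and decays faster than $|x-y|^{-1}$; there, exploiting the convolution structure in $z$ (Minkowski's and Young's inequalities in the $z$--variable) reduces matters to a one--dimensional integral operator with a homogeneous kernel in $r$, which is bounded $L^p\to L^q$ by a standard homogeneous--kernel (Hardy--Littlewood--P\'olya type) estimate. On $\{\xi\le1\}$ the kernel is comparable to $r^{\alpha-\beta}|x-y|^{-1}$, that is, the $2$-D Biot--Savart potential multiplied by the axis weight $r^{-(\beta-\alpha)}$, and the required $L^p\to L^q$ bound for this piece is a Stein--Weiss type weighted Hardy--Littlewood--Sobolev inequality.

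The main obstacle is this near--diagonal piece. Because $q>p$, the estimate is a genuine gain of integrability and cannot be obtained from a bare Schur test (which would only yield $L^p\to L^p$); instead one must prove a weak--type $(p,q)$ bound and interpolate, which is the Stein--Weiss/Marcinkiewicz mechanism. Here the weight $r^{-(\beta-\alpha)}$ is singular on the axis, and it is exactly the hypothesis $\beta-\alpha<1$ that keeps it locally integrable in $r$ there, so that the weak--type bound survives; the remaining constraints $\alpha,\beta\in[0,2]$ and $p,q\in\,]1,\infty[$ are what guarantee convergence of the corresponding integrals in the far region and at spatial infinity. Summing the near-- and far--field contributions then yields
$$\|r^\alpha\wt u\|_{L^q(\Omega)}\le C\|r^\beta\ot\|_{L^p(\Omega)},$$
which is the desired inequality.
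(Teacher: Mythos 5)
There is no in-paper argument to compare against: the lemma is stated with the tag ``Proposition $2.4$ in \cite{GS}'' and no proof is given, so the benchmark is the proof in \cite{GS} itself. Your plan is essentially that argument --- the explicit kernels $G_r,G_z$, the asymptotics of $F$ at $0$ and at $\infty$ (which you state correctly), the homogeneity of degree $-(1+\beta-\alpha)$ of the weighted kernel, and a splitting of the $y$--integration at $\xi=1$, with a Hardy--Littlewood--Sobolev mechanism near the diagonal and decay estimates far away --- so as a strategy it is sound and matches the cited source.

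Two points deserve attention. First, the near-diagonal step should be made more elementary, and in fact must be, because the classical Stein--Weiss theorem you invoke concerns weights that are powers of $|x|$ (distance to a \emph{point}), whereas your weight is a power of the distance to the axis $\{r=0\}$, so it cannot be cited verbatim. The clean fix: on $\{\xi\le1\}$ one automatically has $r\thicksim\bar r$ (since $(r-\bar r)^2\le r\bar r$) and $|x-y|\le\sqrt{r\bar r}\thicksim r$, whence $r^{\alpha-\beta}\lesssim|x-y|^{\alpha-\beta}$ because $\beta\ge\alpha$; the kernel there is then dominated by $|x-y|^{-(1+\beta-\alpha)}$, and the \emph{unweighted} Hardy--Littlewood--Sobolev inequality gives exactly the stated scaling, the hypothesis $\beta-\alpha<1$ being precisely what keeps the exponent $1+\beta-\alpha$ below the dimension $2$. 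Second, the far region $\{\xi>1\}$ is where the remaining hypotheses ($\alpha,\beta\in[0,2]$ and $1<p,q<\infty$) are actually consumed, and there your proposal only names the mechanism: after Minkowski and Young in $z$ one obtains a one-dimensional kernel $\kappa(r,\bar r)$ homogeneous of degree $-(1+\beta-\alpha)/2$, and one must still verify the Hardy--Littlewood--P\'olya type integrability condition, i.e.\ that $\kappa(1,t)$, transferred to the multiplicative group $(\R^+,dt/t)$ and twisted by the appropriate power of $t$, lies in $L^m$ with $\frac1m=\frac{1+\beta-\alpha}2$. This does hold --- the transferred kernel decays exponentially at both ends exactly when the exponent inequalities are strict --- but that power counting is the bulk of the actual work, and in your write-up it is asserted rather than performed.
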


\begin{lem}\label{lemur}
{\sl Let $\wt{u}=u^r e_r+u^z e_z$ and $\ot=\curl\wt u$.
Then for any $1<p<\infty$, there holds
$$\|\wt\nabla\ur\|_{L^p}+\|\wt\nabla\uz\|_{L^p}+
\bigl\|\frac{\ur}r\bigr\|_{L^p}\lesssim\|\ot\|_{L^p}.$$
}\end{lem}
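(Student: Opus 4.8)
The plan is to reduce the whole statement to the general three–dimensional Biot–Savart estimate \eqref{Biot}, and then to carefully unpack what the \emph{full} gradient $\nabla\wt u$ of an axisymmetric vector field actually is. First I would observe that $\wt u=\ur e_r+\uz e_z$ is a genuine divergence-free field on $\R^3$: indeed $\dive\wt u=0$ is precisely the fourth line of \eqref{1.2}, and by hypothesis $\curl\wt u=\ot e_\theta$. Since $e_\theta$ is a unit vector, $\|\curl\wt u\|_{L^p}=\|\ot\|_{L^p}$, so \eqref{Biot} applies and yields, for each fixed $p\in\,]1,\infty[$,
$$\|\nabla\wt u\|_{L^p}\leq C\f{p^2}{p-1}\|\ot\|_{L^p}\lesssim\|\ot\|_{L^p}.$$
This is also where the restriction $1<p<\infty$ is forced on us: the constant in \eqref{Biot} degenerates at both endpoints, so one cannot expect the conclusion at $p=1$ or $p=\infty$.

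It then remains to dominate the three quantities on the left-hand side by $\|\nabla\wt u\|_{L^p}$, and this is exactly the delicate point warned about in the footnote, namely that $\wt\nabla\wt u\neq\nabla\wt u$ for a vector field. I expect this to be the main obstacle, so the key second step is to write out the gradient tensor in the orthonormal cylindrical frame $(e_r,e_\theta,e_z)$. Since $\ut=0$ and all $\theta$-derivatives vanish by axisymmetry, the matrix collapses to
$$\nabla\wt u=\begin{pmatrix}\pa_r\ur & 0 & \pa_z\ur\\[2pt] 0 & \f{\ur}{r} & 0\\[2pt] \pa_r\uz & 0 & \pa_z\uz\end{pmatrix},$$
where the nonzero $(2,2)$ entry $\ur/r$ comes from $\pa_\theta e_r=e_\theta$, i.e. from differentiating the moving frame rather than the scalar components. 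The upshot is that the only nonzero entries are precisely the components of $\wt\nabla\ur=(\pa_r\ur,\pa_z\ur)$, of $\wt\nabla\uz=(\pa_r\uz,\pa_z\uz)$, and the single term $\ur/r$.

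With this explicit form in hand, each of $|\wt\nabla\ur|$, $|\wt\nabla\uz|$ and $|\ur/r|$ is pointwise bounded by the Frobenius norm $|\nabla\wt u|$, since all of them appear as entries (or pairs of entries) of the matrix above. Taking $L^p$ norms and combining with the Biot–Savart bound of the first step gives
$$\|\wt\nabla\ur\|_{L^p}+\|\wt\nabla\uz\|_{L^p}+\bigl\|\f{\ur}{r}\bigr\|_{L^p}\lesssim\|\nabla\wt u\|_{L^p}\lesssim\|\ot\|_{L^p},$$
which is the desired inequality. The only genuinely non-routine ingredient is recognizing in the second step that $\ur/r$ is a bona fide component of $\nabla\wt u$, and hence is automatically controlled by the Calder\'on--Zygmund-type bound \eqref{Biot}, rather than demanding a separate Hardy-type estimate near the axis $\{r=0\}$.
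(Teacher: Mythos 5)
Your proposal is correct and takes essentially the same approach as the paper: the paper's proof likewise observes that $\ur/r$ is a genuine component of the full gradient, writing $\bigl\|\f{\ur}r\bigr\|_{L^p}=\bigl\|\f{\pa_\theta}r(\ur e_r)\bigr\|_{L^p}\leq\|\nabla\wt u\|_{L^p}$, and then concludes via the Biot--Savart estimate \eqref{Biot} using $\dive\wt u=0$. Your explicit cylindrical-frame matrix is simply a more detailed rendering of that same two-step argument.
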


\begin{proof}
Noting that $\ur$ is independent of $\theta$, we have
$$\bigl\|\frac{\ur}r\bigr\|_{L^p}=\bigl\|\frac{\pa_\theta}r(\ur e_r)\bigr\|_{L^p}
\leq\|\nabla\wt u\|_{L^p}.$$
Then this lemma follows directly from the Biot-Savart law \eqref{Biot} since
$\dive\wt u=0$.
\end{proof}

\subsection{The linearized system}

Next we investigate the solution operator $S(t)$ to the linearized system of \eqref{otut}, namely
$\omega^\theta(t)=S(t)\omega_0$ verifies
\begin{equation}\label{linear}
\left\{
\begin{split}
& \pa_t\ot-\bigl(\pa_r^2+\pa_z^2+\frac 1r \pa_r-\frac{1}{r^2}\bigr)\ot=0,   \quad(t,r,z)\in\R^+\times\Omega\\
& \ot|_{r=0}=0,\quad \ot|_{t=0} =\ot_0.
\end{split}
\right.
\end{equation}

\begin{lem}[Lemmas $3.1$ and $3.2$ in \cite{GS}]
{\sl For any $t>0$, one has
\begin{equation}\label{lemexpresion1}
\bigl(S(t)\omega_0\bigr)(r,z)=\frac{1}{4\pi t}\int_{\Omega}\frac{\bar{r}^{1/2}}{r^{1/2}}
{H}\Bigl(\frac{t}{r\bar{r}}\Bigr)\exp\Bigl(-\frac{(r-\bar{r})^2+(z-\bar{z})^2}{4t}\Bigr)
\omega_0(\bar{r},\bar{z})d\bar{r}d\bar{z},
\end{equation}
where the function ${H}:\R^+\rightarrow\R$ is defined by
$$
{H}(t)=\frac{1}{\sqrt{\pi t}}\int_{-\pi/2}^{\pi/2}e^{-\frac{\sin^2 \phi}{t}}\cos(2\phi) d\phi,\quad\forall\ t>0.$$
The function $H(t)$ is smooth on $\R^+$, and have the asymptotic expansions:
\begin{itemize}
\item[(i)] ${H}(t)=\frac{\pi^{1/2}}{4t^{3/2}}+\mathcal{O}\bigl(\frac{1}{t^{5/2}}\bigr),\
{H}'(t)=-\frac{3\pi^{1/2}}{8t^{5/2}}+\mathcal{O}\bigl(\frac{1}{t^{7/2}}\bigr),\ \text{as $t\rightarrow\infty$;}$

\item[(ii)] ${H}(t)=1-\frac{3t}{4}+\mathcal{O}(t^2),\
{H}'(t)=-\frac{3}{4}+\mathcal{O}(t),\ \text{as $t\rightarrow0$.}$
\end{itemize}
}\end{lem}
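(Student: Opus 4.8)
The plan is to recognize the scalar equation \eqref{linear} as the Cartesian heat equation for an auxiliary vector field and then to reduce the resulting three-dimensional Gaussian convolution to the one-dimensional integral defining $H$. First I would observe that the operator $\pa_r^2+\pa_z^2+\f1r\pa_r-\f1{r^2}$ is precisely the $e_\theta$-component of the vector Laplacian acting on an axisymmetric purely azimuthal field: if $W(t,x)\eqdef\ot(t,r,z)\,e_\theta$, then $\ot$ solves \eqref{linear} if and only if $W$ solves the vector heat equation $\pa_t W-\D W=0$ in $\R^3$, the Dirichlet condition $\ot|_{r=0}=0$ being exactly the requirement that $W$ extend continuously across the axis $\{r=0\}$, where $e_\theta$ is undefined. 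Since the heat semigroup acts componentwise in Cartesian coordinates, $W(t)=e^{t\D}W(0)$ is the convolution of $W(0)=\omega_0(\bar r,\bar z)\,e_\theta$ with the Gaussian $(4\pi t)^{-3/2}\exp(-|\cdot|^2/4t)$.

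Next I would evaluate this convolution at the base point $x=(r,0,z)$, which is harmless by axial symmetry, and extract its $e_\theta$-component; at $\theta=0$ this is simply the second Cartesian component, and the first Cartesian component vanishes because it carries an odd factor $\sin\bar\theta$ against a kernel even in $\bar\theta$, so no spurious $e_r$-part survives. Writing $\bar x=(\bar r\cos\bar\theta,\bar r\sin\bar\theta,\bar z)$ and using $|x-\bar x|^2=(r-\bar r)^2+(z-\bar z)^2+2r\bar r(1-\cos\bar\theta)$, the $(\bar r,\bar z)$-dependence factors out as $\exp(-((r-\bar r)^2+(z-\bar z)^2)/4t)$, and the entire $\bar\theta$-integral reduces to
\[
\int_{-\pi}^{\pi}\exp\Bigl(-\f{r\bar r(1-\cos\bar\theta)}{2t}\Bigr)\cos\bar\theta\,d\bar\theta .
\]
The substitution $\bar\theta=2\phi$ together with $1-\cos\bar\theta=2\sin^2\phi$ and $\cos\bar\theta=\cos(2\phi)$ turns this into $2\sqrt{\pi t/(r\bar r)}\,H(t/(r\bar r))$, and collecting the prefactor $\bar r\,(4\pi t)^{-3/2}$ with this factor gives exactly $\f1{4\pi t}\f{\bar r^{1/2}}{r^{1/2}}H(t/(r\bar r))$, which is \eqref{lemexpresion1}.

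For the asymptotics I would analyse $I(t)\eqdef\int_{-\pi/2}^{\pi/2}e^{-\sin^2\phi/t}\cos(2\phi)\,d\phi=\sqrt{\pi t}\,H(t)$ directly. As $t\to0$, the substitution $u=\sin\phi$ gives $I(t)=\int_{-1}^1 e^{-u^2/t}(1-2u^2)(1-u^2)^{-1/2}\,du$; rescaling $u=\sqrt t\,v$ and expanding $(1-2tv^2)(1-tv^2)^{-1/2}=1-\f32 tv^2+\mathcal{O}(t^2)$ then yields, via the Gaussian moments $\int e^{-v^2}dv=\sqrt\pi$ and $\int v^2 e^{-v^2}dv=\sqrt\pi/2$, the expansion $I(t)=\sqrt{\pi t}\,(1-\f34 t+\mathcal{O}(t^2))$, hence $H(t)=1-\f34 t+\mathcal{O}(t^2)$. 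As $t\to\infty$ I would instead Taylor-expand $e^{-\sin^2\phi/t}=1-\f{\sin^2\phi}{t}+\mathcal{O}(t^{-2})$; the leading term integrates to $0$ because $\int_{-\pi/2}^{\pi/2}\cos(2\phi)\,d\phi=0$, while $\int_{-\pi/2}^{\pi/2}\sin^2\phi\cos(2\phi)\,d\phi=-\f\pi4$, giving $I(t)=\f{\pi}{4t}+\mathcal{O}(t^{-2})$ and thus $H(t)=\f{\pi^{1/2}}{4t^{3/2}}+\mathcal{O}(t^{-5/2})$. The expansions for $H'$ follow by differentiating under the integral sign, which (justified by dominated convergence on compact subsets of $\R^+$) also gives smoothness of $H$ on $\R^+$.

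The computations are each elementary, so the main care goes into the first reduction: correctly identifying $-(\pa_r^2+\pa_z^2+\f1r\pa_r-\f1{r^2})$ with the azimuthal component of $-\D$ on swirl-type fields, verifying that the convolution produces a field tangent to $e_\theta$ at the base point, and tracking the constants when passing from the factor $(4\pi t)^{-3/2}$ and the Bessel-type angular integral to the clean $(4\pi t)^{-1}$ prefactor in \eqref{lemexpresion1}. Everything after that is a routine Laplace-method expansion for small $t$ and a Taylor expansion for large $t$.
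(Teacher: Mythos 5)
Your proof is correct: the identification of $\pa_r^2+\pa_z^2+\f1r\pa_r-\f1{r^2}$ with the $e_\theta$-component of the Cartesian vector Laplacian acting on $W=\omega e_\theta$, the reduction of the 3-D Gaussian convolution to the angular integral defining $H$ (with the constants collapsing to $\f{1}{4\pi t}\f{\bar r^{1/2}}{r^{1/2}}$), and the elementary Laplace/Taylor expansions of $H$ and $H'$ are all sound. The paper itself gives no proof of this lemma — it is quoted from \cite{GS} — and your derivation is essentially the standard one used there, so the approach is the same as the source the paper relies on.
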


\begin{cor}\label{corH}
$t^\alpha H(t)$ and $t^\beta H'(t)$
are bounded on $\R^+$ provided
$0\leq\alpha\leq\frac 32$ and $0\leq\beta\leq\frac 52$.
\end{cor}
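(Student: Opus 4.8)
The statement to prove is Corollary \ref{corH}: the functions $t^\alpha H(t)$ and $t^\beta H'(t)$ are bounded on $\R^+$ for $0\leq\alpha\leq\frac32$ and $0\leq\beta\leq\frac52$.

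The plan is to exploit the two asymptotic expansions for $H$ and $H'$ provided in the preceding lemma, which control the behavior at the two ends of the half-line $\R^+$, and then handle the compact middle range by continuity. The key observation is that boundedness on all of $\R^+$ for a continuous function reduces to separately checking the limits as $t\rightarrow 0^+$ and as $t\rightarrow\infty$, since any continuous function is automatically bounded on a compact interval $[\delta,M]$ away from the two endpoints.

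First I would treat $t^\alpha H(t)$. As $t\rightarrow 0^+$, expansion (ii) gives $H(t)=1+\mathcal{O}(t)$, so $H(t)$ stays bounded (indeed tends to $1$), and multiplying by $t^\alpha$ with $\alpha\geq 0$ keeps $t^\alpha H(t)$ bounded near $0$. As $t\rightarrow\infty$, expansion (i) gives $H(t)=\frac{\pi^{1/2}}{4}t^{-3/2}+\mathcal{O}(t^{-5/2})$, so the dominant term of $t^\alpha H(t)$ is of order $t^{\alpha-3/2}$; this is bounded precisely when $\alpha\leq\frac32$, which is exactly the stated hypothesis. On the compact part $[\delta,M]$, smoothness of $H$ on $\R^+$ (asserted in the lemma) together with continuity of $t^\alpha$ gives boundedness. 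Combining the three regimes yields boundedness on all of $\R^+$.

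The argument for $t^\beta H'(t)$ is entirely parallel. Near $t=0$, expansion (ii) gives $H'(t)=-\frac34+\mathcal{O}(t)$, hence $H'$ is bounded and $t^\beta H'(t)$ is bounded for $\beta\geq0$. Near infinity, expansion (i) gives $H'(t)=-\frac{3\pi^{1/2}}{8}t^{-5/2}+\mathcal{O}(t^{-7/2})$, so $t^\beta H'(t)$ behaves like $t^{\beta-5/2}$, which is bounded exactly when $\beta\leq\frac52$; again this matches the hypothesis. The middle range is handled by smoothness as before. I do not expect any genuine obstacle here: the corollary is essentially a bookkeeping exercise reading off the exponent thresholds $\frac32$ and $\frac52$ directly from the leading terms of the two expansions, and the only mild care needed is to phrase the reduction to the two endpoints cleanly (for instance, noting that the $\mathcal{O}$ terms decay faster than the leading terms at infinity, so they do not affect which exponents are admissible).
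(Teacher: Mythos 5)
Your proof is correct and is exactly the argument the paper has in mind: the corollary is stated without proof as an immediate consequence of the lemma's asymptotic expansions of $H$ and $H'$ at $0$ and at infinity, with smoothness of $H$ on $\R^+$ covering the compact middle range, precisely as you wrote.
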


Now we can use this bound to derive the following weighted estimate:

\begin{prop}\label{lemsemi}
{\sl For any $1\leq p\leq q\leq \infty$ and any $f(r,z)\in L^p(\Omega)$,
the semigroup $\bigl(S(t)\bigr)_{t\geq0}$ is bounded in $L^p(\Omega)$,
and satisfy the following estimates.
\begin{itemize}
\item[(i)] For any $\alpha,~\beta$ satisfying $\alpha+\beta\leq0,
~\alpha\geq -1,~\beta\geq -1$, there holds:
\begin{equation}\label{semi1}
\bigl\|r^\alpha S(t)\widetilde{\nabla} (r^{\beta}f)\bigr\|_{L^q(\Omega)}
\leq\f{C}{t^{\f12-\f{\alpha+\beta}{2}+\f1p-\f1q}}\|f\|_{L^p(\Omega)},
\end{equation}
where $\widetilde{\nabla}=(\pa_r,\pa_z)$.
In particular, taking $\alpha=\beta=0$, we have:
\begin{equation}\label{semi2}
\|S(t)\widetilde{\nabla} f\|_{L^q(\Omega)}
\leq\frac{C}{t^{\frac 12+\frac 1p-\frac 1q}}\|f\|_{L^p(\Omega)}.
\end{equation}
\item[(ii)] For any $\alpha,~\beta$ satisfying $\alpha+\beta\leq1,
~\alpha\geq -1,~\beta\geq -1$, there holds:
\begin{equation}\label{semi3}
\|r^\alpha S(t) (r^{\beta-1} f)\|_{L^q(\Omega)}
\leq\f{C}{t^{\f12-\f{\alpha+\beta}{2}+\f1p-\f1q}}\|f\|_{L^p(\Omega)}.
\end{equation}
In particular, taking $\alpha=0,~\beta=1$, or $\alpha=\beta=0$, we have:
\begin{equation}\label{semi4}
\|S(t)f\|_{L^q(\Omega)}
\leq\frac{C}{t^{\frac 1p-\frac 1q}}\|f\|_{L^p(\Omega)},\quad
\|S(t) \bigl(\frac{f}{r}\bigr)\|_{L^q(\Omega)}
\leq\frac{C}{t^{\frac12+\frac 1p-\frac 1q}}\|f\|_{L^p(\Omega)}.
\end{equation}
\item[(iii)]  For any $\gamma,~\epsilon$ satisfying $\gamma+\epsilon\leq 0,
~\gamma\geq 0,~\epsilon\geq-1$, there holds:
\begin{equation}\label{semi5}
\bigl\|r^\gamma \widetilde{\nabla} S(t)(r^{\epsilon}f)\bigr\|_{L^q(\Omega)}
\leq\f{C}{t^{\f12-\f{\gamma+\epsilon}{2}+\f1p-\f1q}}\|f\|_{L^p(\Omega)}.
\end{equation}
\end{itemize}}
\end{prop}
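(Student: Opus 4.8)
The plan is to derive all four estimates directly from the single integral representation \eqref{lemexpresion1}, writing $S(t)f(r,z)=\int_\Omega K_t(r,z,\bar r,\bar z)f(\bar r,\bar z)\,d\bar r\,d\bar z$ with $K_t$ the kernel displayed there, and then reducing everything to a kernel bound at $t=1$. First I would put each operator into kernel form. For \eqref{semi3} nothing is needed beyond multiplying $K_t$ by the weights $r^\alpha\bar r^{\beta-1}$. For \eqref{semi1} I would integrate by parts in $(\bar r,\bar z)$ to move $\widetilde{\nabla}$ off $r^\beta f$ and onto $K_t$; the boundary contributions vanish because the factor $\bar r^{1/2}$ in $K_t$ kills the term at $\bar r=0$ while the Gaussian kills the term at infinity. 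For \eqref{semi5} I would instead differentiate $K_t$ in the output variables $(r,z)$. In all three cases one is left with an explicit kernel $\mathcal K_t$ of the same structural type: the Gaussian $\exp(-((r-\bar r)^2+(z-\bar z)^2)/4t)$ times polynomial factors in $r,\bar r,(r-\bar r),(z-\bar z)$ times $H(t/(r\bar r))$ or $H'(t/(r\bar r))$.

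Next I would exploit scaling. The substitution $r=\sqrt t\,\rho,\ z=\sqrt t\,\zeta,\ \bar r=\sqrt t\,\bar\rho,\ \bar z=\sqrt t\,\bar\zeta$ leaves both the argument $t/(r\bar r)=1/(\rho\bar\rho)$ of $H$ and the Gaussian exponent invariant, so $K_t(\sqrt t\,\rho,\sqrt t\,\zeta,\sqrt t\,\bar\rho,\sqrt t\,\bar\zeta)=t^{-1}K_1(\rho,\zeta,\bar\rho,\bar\zeta)$, while each weight $r^\alpha$, $\bar r^\beta$ and each derivative contributes a clean power of $\sqrt t$. Collecting these powers together with the Jacobians of the $L^p(\Omega)$ and $L^q(\Omega)$ norms reproduces exactly the prefactor $t^{-(\frac12-\frac{\alpha+\beta}2+\frac1p-\frac1q)}$ and its analogues, so it suffices to prove each estimate at $t=1$.

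At $t=1$ I would close the argument with the generalized Young (Schur-type) inequality: for $1\le p\le q\le\infty$ and $s$ defined by $\frac1s=1+\frac1q-\frac1p$, the operator with kernel $\mathcal K_1$ maps $L^p(\Omega)$ into $L^q(\Omega)$ as soon as $\sup_{\bar\rho,\bar\zeta}\|\mathcal K_1(\cdot\,;\bar\rho,\bar\zeta)\|_{L^s(\Omega)}$ and the symmetric supremum are finite. The Gaussian performs the $\zeta$-integration and supplies the decay in $|\rho-\bar\rho|$; the essential mechanism is to use Corollary \ref{corH} to trade $H(1/(\rho\bar\rho))$ for any admissible power $(\rho\bar\rho)^{a}$ with $a\in[0,\tfrac32]$ (and $H'(1/(\rho\bar\rho))$ for $(\rho\bar\rho)^{a}$ with $a\in[0,\tfrac52]$), choosing $a$ so as to absorb the weights $\rho^\alpha,\bar\rho^\beta$ and the negative powers ($\bar\rho^{-1/2}$, $\bar\rho^{-1}$) created by the differentiation.

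The main obstacle is precisely this power-counting. One must check that the hypotheses $\alpha+\beta\le0$ for \eqref{semi1}, $\alpha+\beta\le1$ for \eqref{semi3}, and $\gamma+\epsilon\le0$ for \eqref{semi5}, together with the one-sided bounds $\alpha,\beta\ge-1$ (respectively $\gamma\ge0,\ \epsilon\ge-1$), leave exactly enough room within the admissible exponent ranges of $H$ and $H'$ to make both sup-integrals converge uniformly in the frozen variables. The delicate regime is $\rho\to0$, where the singular factor $\rho^{-1/2}$ coming from $K_1$ must be compensated by the decay $H(1/(\rho\bar\rho))\lesssim(\rho\bar\rho)^{3/2}$ furnished by the large-argument asymptotics; this is the step where Corollary \ref{corH} is indispensable. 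A little extra care is needed at the endpoints $p=q$ (where $s=1$) and $q=\infty$, for which the two sup-conditions degenerate and are most cleanly replaced by a direct $L^1_{\bar r,\bar z}$, respectively pointwise, bound on $\mathcal K_1$.
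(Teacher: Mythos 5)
Your proposal is correct in outline, shares the paper's first step (the kernel representation from \eqref{lemexpresion1} plus integration by parts to generate the kernels for (i) and (iii)), but then follows a genuinely different route. The paper never rescales: it proves, for every $t>0$, \emph{pointwise} kernel bounds (Assertions a and b, i.e. \eqref{semi6}--\eqref{semi7}, and later \eqref{semi9}--\eqref{semi10}) dominating each weighted kernel by $t^{-(\frac12-\frac{\alpha+\beta}{2})}\exp\bigl(-\frac{(r-\bar r)^2+(z-\bar z)^2}{5t}\bigr)$, and then concludes in one line by Young's convolution inequality; your parabolic rescaling to $t=1$ followed by the two-sided Schur test is a legitimate replacement, and the scaling step does buy a real simplification by eliminating all the $t$-bookkeeping (your verification that the kernel satisfies $K_t(\sqrt t\,\cdot)=t^{-1}K_1(\cdot)$ and that the weights, derivatives and Jacobians reproduce the stated prefactor is sound). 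Two caveats, one mathematical and one structural. Mathematically, your power-counting sketch locates the delicate regime only at $\rho\to0$, but that is not where the real difficulty sits: no single admissible power $a$ in $H(1/(\rho\bar\rho))\lesssim(\rho\bar\rho)^a$ works uniformly, and in the unbalanced regions (say $\bar\rho<\rho/2$ or $\bar\rho>2\rho$), after choosing $a$ to tame the small variable one is left with a \emph{positive} power of the large variable (e.g. $\rho^{\alpha-\beta}$ when $\alpha>\beta$), which can only be absorbed by noting that there the difference variable is comparable to the large variable and letting the Gaussian eat the polynomial growth --- precisely the paper's inequality $e^{-x/4}\lesssim x^{-s}e^{-x/5}$ and its four-case decomposition; your remark that "the Gaussian supplies the decay in $|\rho-\bar\rho|$" contains this idea, but that case analysis is where essentially all of the work lies, so your claim that the hypotheses "leave exactly enough room" still has to be checked region by region exactly as in the paper. (A smaller quibble: for the vanishing of the boundary term at $\bar r=0$ in (i), the factor $\bar r^{1/2}$ alone is not enough when $\beta\in[-1,-\frac12)$; you also need the decay $H(t/(r\bar r))\lesssim(r\bar r/t)^{3/2}$ there.) Structurally, the paper's pointwise bounds \eqref{semi6}, \eqref{semi9}, \eqref{semi10} are not disposable scaffolding: they are invoked again in Section \ref{seclocal}, where the linear estimates for the $X_T$ norm combine them with Minkowski's inequality to produce space-time bounds, so a Schur-only proof of this proposition would prove the statement as quoted but would leave those later arguments without the pointwise inequalities they actually cite.
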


\begin{proof}
For the proof of $({\rm i}),~({\rm ii})$, we first write
by using the expression \eqref{lemexpresion1} that
$$
r^\alpha\bigl( S(t)(r^{\beta-1}f)\bigr)(r,z)=\frac{1}{4\pi t}
\int_\Omega \frac{1}{r^{\frac 12-\alpha}\bar{r}^{\frac 12-\beta}}H\bigl(\frac{t}{r\bar{r}}\bigr)
\exp\Bigl(-\frac{(r-\bar{r})^2+(z-\bar{z})^2}{4t}\Bigr)
\cdot f(\bar{r},\bar{z})\,d\bar{r}\,d\bar{z},
$$
and by using integration by parts that
$$
r^\alpha\bigl( S(t)\widetilde{\nabla}(r^{\beta}f)\bigr)(r,z)=\frac{1}{4\pi t}
\int_\Omega \frac{\bar{r}^{\frac 12+\beta}}{r^{\frac 12-\alpha}}\exp\Bigl(-\frac{(r-\bar{r})^2+(z-\bar{z})^2}{4t}\Bigr)
\cdot\bigl(A_r,~A_z\bigr)\cdot f(\bar{r},\bar{z})\,d\bar{r}\,d\bar{z},
$$
where
$$A_r(\bar{r},\bar{z})=\frac{t}{r\bar{r}^2}{H}'\bigl(\frac{t}{r\bar{r}}\bigr)
-\bigl(\frac{1}{2\bar{r}}+\frac{r-\bar{r}}{2t}\bigr) H\bigl(\frac{t}{r\bar{r}}\bigr),\quad
A_z(\bar{r},\bar{z})=-\frac{z-\bar{z}}{2t}{H}\bigl(\frac{t}{r\bar{r}}\bigr).$$
Then $({\rm i}),~({\rm ii})$ can be proved by using Young's inequality
and the following two facts:

\textbf{Assertion a:} for any $\alpha,~\beta$ satisfying $\alpha+\beta\leq1$, $\alpha\geq -1$ and $\beta\geq -1$, there holds:
\begin{equation}\begin{split}\label{semi6}
\cA&\eqdef\Bigl(\frac{t}{r^{\frac 32-\alpha}\bar{r}^{\frac 32-\beta}}
\bigl|H'\bigl(\frac{t}{r\bar{r}}\bigr)\bigr|
+\frac{1}{r^{\frac 12-\alpha}\bar{r}^{\frac 12-\beta}}
\bigl|H\bigl(\frac{t}{r\bar{r}}\bigr)\bigr|\Bigr)
\cdot\exp\Bigl(-\frac{(r-\bar{r})^2+(z-\bar{z})^2}{4t}\Bigr)\\
&\lesssim \frac{1}{t^{\frac12-\frac{\alpha+\beta}{2}}}
\cdot\exp\Bigl(-\frac{(r-\bar{r})^2+(z-\bar{z})^2}{5t}\Bigr).
\end{split}\end{equation}

\textbf{Assertion b:} for any $\alpha,~\beta$ satisfying $\alpha+\beta\leq0$, $\alpha\geq -1$ and $\beta\geq -1$, there holds:
\begin{equation}\begin{split}\label{semi7}
\cB&\eqdef\frac{\bar{r}^{\frac 12+\beta}}{r^{\frac 12-\alpha}}
\Bigl(\bigl|\frac{r-\bar{r}}{2t}H\bigl(\frac{t}{r\bar{r}}\bigr)\bigr|
+\bigl|\frac{z-\bar{z}}{2t}H\bigl(\frac{t}{r\bar{r}}\bigr)\bigr|\Bigr)
\cdot\exp\Bigl(-\frac{(r-\bar{r})^2+(z-\bar{z})^2}{4t}\Bigr)\\
&\lesssim \frac{1}{t^{\frac12-\frac{\alpha+\beta}{2}}}
\cdot\exp\Bigl(-\frac{(r-\bar{r})^2+(z-\bar{z})^2}{5t}\Bigr).
\end{split}\end{equation}

\noindent$\bullet$ \textbf{The proof of Assertion a.}
The condition in Assertion a guarantees that
$0\leq\frac12-\frac{\alpha+\beta}{2}\leq\frac32$.
Let us first consider the case when $\alpha\geq\beta$ and
$\bar{r}\geq \frac r2$, we can deduce from Corollary \ref{corH} that
\begin{align*}
\cA
&\lesssim \Bigl(\frac{t}{r^{\frac 32-\alpha}\bar{r}^{\frac 32-\beta}}\bigl|\frac{r\bar{r}}{t}\bigr|^{\frac32-\frac{\alpha+\beta}{2}}
+\frac{1}{r^{\frac 12-\alpha}\bar{r}^{\frac 12-\beta}}\bigl|\frac{r\bar{r}}{t}\bigr|^{\frac12-\frac{\alpha+\beta}{2}}\Bigr)
\cdot\exp\Bigl(-\frac{(r-\bar{r})^2+(z-\bar{z})^2}{4t}\Bigr)\\
&\leq2^{1+\f{\alpha-\beta}2}\cdot \f{1}{t^{\frac12-\frac{\alpha+\beta}{2}}}\cdot
\exp\Bigl(-\frac{(r-\bar{r})^2+(z-\bar{z})^2}{4t}\Bigr).
\end{align*}
And when $\alpha\geq\beta$ and $\bar{r}<\frac r2$,
there holds $r<2|\bar{r}-r|$, then we can deduce from Corollary \ref{corH}
and the elementary inequality $e^{-\f{x}4}\lesssim x^{-s}e^{-\f{x}5}$,
which holds for any $x>0,~s>0$, that
\begin{align*}
\cA&\lesssim \Bigl(\frac{t}{r^{\frac 32-\alpha}\bar{r}^{\frac 32-\beta}}\bigl|\frac{r\bar{r}}{t}\bigr|^{\frac32-\beta}
+\frac{1}{r^{\frac 12-\alpha}\bar{r}^{\frac 12-\beta}}\bigl|\frac{r\bar{r}}{t}\bigr|^{\frac12-\beta}\Bigr)
\exp\Bigl(-\frac{(r-\bar{r})^2+(z-\bar{z})^2}{4t}\Bigr)\\
&\lesssim \frac{{r}^{\alpha-\beta}}{t^{\frac12-\beta}}\cdot
\bigl(\frac{t}{(r-\bar{r})^2+(z-\bar{z})^2}\bigr)^{\frac{\alpha-\beta}2}
\cdot\exp\Bigl(-\frac{(r-\bar{r})^2+(z-\bar{z})^2}{5t}\Bigr)\\
&\leq2^{\alpha-\beta}\cdot\f{1}{t^{\frac12-\frac{\alpha+\beta}{2}}}
\cdot\exp\Bigl(-\frac{(r-\bar{r})^2+(z-\bar{z})^2}{5t}\Bigr).
\end{align*}
On the other hand, when $\alpha\leq \beta$ and $\bar{r}\leq 2r$, we have
\begin{align*}
\cA
&\lesssim \Bigl(\frac{t}{r^{\frac 32-\alpha}\bar{r}^{\frac 32-\beta}}\bigl|\frac{r\bar{r}}{t}\bigr|^{\frac32-\frac{\alpha+\beta}{2}}
+\frac{1}{r^{\frac 12-\alpha}\bar{r}^{\frac 12-\beta}}\bigl|\frac{r\bar{r}}{t}\bigr|^{\frac12-\frac{\alpha+\beta}{2}}\Bigr)
\cdot\exp\Bigl(-\frac{(r-\bar{r})^2+(z-\bar{z})^2}{4t}\Bigr)\\
&\leq2^{1+\f{\beta-\alpha}2}\cdot \f{1}{t^{\frac12-\frac{\alpha+\beta}{2}}}
\cdot\exp\Bigl(-\frac{(r-\bar{r})^2+(z-\bar{z})^2}{4t}\Bigr).
\end{align*}
And when $\alpha\leq \beta$ and $\bar{r}>2r$,
there holds $\bar{r}<2|\bar{r}-r|$, hence
\begin{align*}
\cA
&\lesssim \Bigl(\frac{t}{r^{\frac 32-\alpha}\bar{r}^{\frac 32-\beta}}\bigl|\frac{r\bar{r}}{t}\bigr|^{\frac32-\alpha}
+\frac{1}{r^{\frac 12-\alpha}\bar{r}^{\frac 12-\beta}}\bigl|\frac{r\bar{r}}{t}\bigr|^{\frac12-\alpha}\Bigr)
\exp\Bigl(-\frac{(r-\bar{r})^2+(z-\bar{z})^2}{4t}\Bigr)\\
&\lesssim \frac{\bar{r}^{\beta-\alpha}}{t^{\frac12-\alpha}}\cdot
\bigl(\f{t}{(r-\bar{r})^2+(z-\bar{z})^2}\bigr)^{\frac{\beta-\alpha}2}
\cdot\exp\Bigl(-\frac{(r-\bar{r})^2+(z-\bar{z})^2}{5t}\Bigr)\\
&\leq2^{\beta-\alpha}\cdot\f{1}{t^{\frac12-\frac{\alpha+\beta}{2}}}
\cdot\exp\Bigl(-\frac{(r-\bar{r})^2+(z-\bar{z})^2}{5t}\Bigr).
\end{align*}
Combining the above four estimates completes the proof of Assertion a.

\noindent$\bullet$ \textbf{The proof of Assertion b.}
When $\frac r2\leq \bar{r}\leq 2r$, we have
\begin{align*}
\cB
&\lesssim \frac{\bar{r}^{\frac 12+\beta}}{r^{\frac 12-\alpha}}
\frac{|r-\bar{r}|+|z-\bar{z}|}{t}\bigl|\frac{r\bar{r}}{t}\bigr|^{-\frac{\alpha+\beta}{2}}
\bigl(\f{t}{(r-\bar{r})^2+(z-\bar{z})^2}\bigr)^{\frac12}
\exp\Bigl(-\frac{(r-\bar{r})^2+(z-\bar{z})^2}{5t}\Bigr)\\
&\lesssim \f{1}{t^{\frac12-\frac{\alpha+\beta}{2}}}
\cdot\exp\Bigl(-\frac{(r-\bar{r})^2+(z-\bar{z})^2}{5t}\Bigr).
\end{align*}
And when $\bar{r}>2r$ or $\bar{r}\leq \frac r2$,
there holds $\bar{r}+r<3|\bar{r}-r|$. The condition in Assertion b guarantees that
there exists a positive constant $\eta$ so that
$\max\bigl\{0,\frac12-\alpha,-\frac12-\beta,-\frac{1+\alpha+\beta}{2}\bigr\}
\leq \eta\leq\frac32$. Then we can deduce from Corollary \ref{corH} that
\begin{align*}
\cB
&\lesssim \frac{\bar{r}^{\frac 12+\beta}}{r^{\frac 12-\alpha}}\frac{|r-\bar{r}|+|z-\bar{z}|}{t}\bigl|\frac{r\bar{r}}{t}\bigr|^{\eta}
\bigl(\f{t}{(r-\bar{r})^2+(z-\bar{z})^2}\bigr)^{\frac{1+2\eta+\alpha+\beta}2}
\exp\Bigl(-\frac{(r-\bar{r})^2+(z-\bar{z})^2}{5t}\Bigr)\\
&\lesssim \frac{1}{t^{\frac12-\frac{\alpha+\beta}{2}}}
\exp\Bigl(-\frac{(r-\bar{r})^2+(z-\bar{z})^2}{5t}\Bigr).
\end{align*}
Combining the above two estimates completes the proof of Assertion b.
\smallskip

Now let us turn to the proof of $({\rm iii})$. We first write the
following explicit formula:
\begin{equation}\begin{split}\label{semi8}
r^\gamma\bigl(\pa_r,~\pa_z\bigr) \bigl(S(t)(r^\epsilon & f)\bigr)=\frac{1}{4\pi t}
\int_\Omega \frac{\bar{r}^{\frac 12+\epsilon}}{r^{\frac 12-\gamma}}\exp\Bigl(-\frac{(r-\bar{r})^2+(z-\bar{z})^2}{4t}\Bigr)
\cdot f(\bar{r},\bar{z})\\
&\cdot\Bigl(-\frac{t}{r^2\bar{r}}{H}'\bigl(\frac{t}{r\bar{r}}\bigr)
-\bigl(\frac{1}{2r}+\frac{r-\bar{r}}{2t}\bigr) H\bigl(\frac{t}{r\bar{r}}\bigr),
~-\frac{z-\bar{z}}{2t}{H}\bigl(\frac{t}{r\bar{r}}\bigr)\Bigr)\,d\bar{r}\,d\bar{z}.
\end{split}\end{equation}
Let us denote
$$B_1(r,z,\bar{r},\bar{z})\eqdef
\Bigl(\frac{t}{r^{\frac52-\gamma}\bar{r}^{\frac12-\epsilon}}\bigl|{H}'\bigl(\frac{t}{r\bar{r}}\bigr)\bigr|
+\frac{\bar{r}^{\frac12+\epsilon}}{r^{\frac32-\gamma}}\bigl|H\bigl(\frac{t}{r\bar{r}}\bigr)\bigr|\Bigr)
\exp\Bigl(-\frac{(r-\bar{r})^2+(z-\bar{z})^2}{4t}\Bigr),$$
$$B_2(r,z,\bar{r},\bar{z})\eqdef
\frac{\bar{r}^{\frac 12+\epsilon}}{r^{\frac 12-\gamma}}
\Bigl(\bigl|\frac{r-\bar{r}}{2t} H\bigl(\frac{t}{r\bar{r}}\bigr)\bigr|
+\bigl|\frac{z-\bar{z}}{2t}{H}\bigl(\frac{t}{r\bar{r}}\bigr)\bigr|\Bigr)
\exp\Bigl(-\frac{(r-\bar{r})^2+(z-\bar{z})^2}{4t}\Bigr).$$
$B_2$ can be bounded directly by using \eqref{semi7} with
$\alpha=\gamma,~\beta=\epsilon$ that
\begin{equation}\label{semi9}
B_2(r,z,\bar{r},\bar{z})\lesssim \frac{1}{t^{\frac12-\frac{\gamma+\epsilon}{2}}}
\cdot\exp\Bigl(-\frac{(r-\bar{r})^2+(z-\bar{z})^2}{5t}\Bigr).
\end{equation}

For $B_1$, when $\bar{r}\leq 2r$, we deduce from Corollary \ref{corH} that
\begin{align*}
B_1
&\lesssim \Bigl(\frac{t}{r^{\frac 52-\gamma}\bar{r}^{\frac 12-\epsilon}} \bigl|\frac{r\bar{r}}{t}\bigr|
^{\frac32-\frac{\gamma+\epsilon}2}
+\frac{\bar{r}^{\frac 12+\epsilon}}{r^{\frac 32-\gamma}} \bigl|\frac{r\bar{r}}{t}\bigr|
^{\frac12-\frac{\gamma+\epsilon}2}\Bigr)\cdot
\exp\Bigl(-\frac{(r-\bar{r})^2+(z-\bar{z})^2}{4t}\Bigr)\\
&\lesssim \frac{1}{t^{\frac12-\frac{\gamma+\epsilon}2}}\cdot\exp\Bigl(-\frac{(r-\bar{r})^2+(z-\bar{z})^2}{4t}\Bigr).
\end{align*}
When $\bar{r}>2r$, there holds $\bar{r}<2|\bar{r}-r|$, and thus we can get
\begin{align*}
B_1
&\lesssim \Bigl(\frac{t}{r^{\frac 52-\gamma}\bar{r}^{\frac 12-\epsilon}}\bigl|\frac{r\bar{r}}{t}\bigr|^{\frac52-\gamma}
+\frac{\bar{r}^{\frac 12+\epsilon}}{r^{\frac 32-\gamma}}\bigl|\frac{r\bar{r}}{t}\bigr|^{\frac32-\gamma}\Bigr)
\cdot\exp\Bigl(-\frac{(r-\bar{r})^2+(z-\bar{z})^2}{4t}\Bigr)\\
&\lesssim \frac{\bar{r}^{2-\gamma+\epsilon}}{t^{\frac32-\gamma}}\cdot
\bigl(\frac{t}{(r-\bar{r})^2+(z-\bar{z})^2}\bigr)^{1-\frac{\gamma-\epsilon}{2}}\cdot\exp\Bigl(-\frac{(r-\bar{r})^2+(z-\bar{z})^2}{5t}\Bigr)\\
&\lesssim \frac{1}{t^{\frac12-\frac{\gamma+\epsilon}2}}\cdot\exp\Bigl(-\frac{(r-\bar{r})^2+(z-\bar{z})^2}{5t}\Bigr).
\end{align*}
The above two estimates guarantee that we always have
\begin{equation}\label{semi10}
B_1(r,z,\bar{r},\bar{z})\lesssim \frac{1}{t^{\frac12-\frac{\gamma+\epsilon}2}}
\cdot\exp\Bigl(-\frac{(r-\bar{r})^2+(z-\bar{z})^2}{5t}\Bigr).
\end{equation}
Then \eqref{semi5} follows from \eqref{semi8}-\eqref{semi10}
and Young's inequality.
\end{proof}

\subsection{Some basic facts concerning axisymmetric N-S}

Our proof of Therem \ref{thm1.1} deeply relies on the strategy to decompose
the solution into two parts:
one is sufficiently small in the scaling invariant spaces,
while the other lies in some supercritical spaces.

Thus it is natural to study the axisymmetric
N-S with small initial data.
We shall prove that in this case, there would exist a global mild solution
which keeps sufficiently small in some well-designed space.
To state this precisely, let us introduce
\begin{equation}\begin{split}\label{defxt}
\|(&\ot,\ut)\|_{X_T}\eqdef
\sup_{0<t< T}\Bigl\{\|\ot(t)\|_{L^1(\Omega)\bigcap L^{\frac32}}
+t\|\ot(t)\|_{L^\infty}+\|\ut(t)\|_{L^2(\Omega)}\\
&+t^{\frac12}\bigl(\|\ut(t)\|_{L^{\infty}}
+\|r^{-1}\ut(t)\|_{L^2(\Omega)}+\|\widetilde{\nabla}\ut(t)\|_{L^2(\Omega)}\bigr)
+t^{\frac34}\|r^{-1}\ut(t)\|_{L^4(\Omega)}\Bigr\}\\
&+\|\ot\|_{L^2_T L^2(\Omega)\bigcap L_T^4L^2}+\|\ut\|_{L^4_T L^{4}(\Omega)}
+\|r^{-\frac35}\ut\|_{L^{\frac52}_T L^{\frac52}(\Omega)}
+\bigl\|r^{\frac23}\ot\bigr\|_{L^3_T L^3(\Omega)}\\
&+\bigl\|t^{\frac13}\ot\bigr\|_{L^3_T L^3(\Omega)}
+\bigl\|t^{\frac12}r^{-1}\ot\bigr\|_{L^2_T L^2(\Omega)}
+\bigl\|t^{\frac16}r^{\frac16}\widetilde{\nabla}\ot\bigr\|_{L^2_T L^2}
+\bigl\|t^{\frac16}\widetilde{\nabla}\ut\bigr\|_{L^{\frac{12}5}_T L^{\frac{12}5}(\Omega)}.
\end{split}\end{equation}
Then we can use a fixed-point argument to prove the following well-posedness
result in $X_T$. This proof is quite complicated, so we decide to put it
in Appendix \ref{appfix}.
\begin{prop}\label{globalsmall}
{\sl For any initial data $\ot_0\in L^1(\Omega)\cap L^{\frac32}$
and $\ut_0\in L^2(\Omega)$
with $\|\ot_0\|_{L^1(\Omega)\cap L^{\frac32}}$ and
$\|\ut_0\|_{L^2(\Omega)}$ sufficiently small,
the system \eqref{otut} would have a unique global mild solution $(\ot,\ut)$ in $X_\infty$.
Moreover, there exists some universal constant $C_1$ such that
\begin{equation}
\|(\ot,\ut)\|_{X_\infty}\leq 2C_1\bigl(\|\ot_0\|_{L^{\frac32}}+\|\ot_0\|_{L^1(\Omega)}
+\|\ut_0\|_{L^2(\Omega)}\bigr).
\end{equation}
}
\end{prop}

\begin{rmk}
The space $X_T$ consists of weighted norms of the form
$\|t^\alpha r^\beta\Box\|_{L_T^p L^q}$, where $\Box$ can be either $\ut,~\ot$
or their derivatives. These norms are chosen in considerations of:
$({\rm i})$ We should take $p\geq q$ so that we can apply Minkowski's inequality.
$({\rm ii})$ We should take $\alpha$ as small as possible,
so that the potential growth in time is still under control.
$({\rm iii})$ All these norms must be scaling invariant.
In this sense, the choice of $X_T$ seems more natural and sharp.
\end{rmk}

Let us end this section with another two useful inequalities.

\begin{lem}\label{lemrut}
{\sl For any regular enough solution of \eqref{1.2} on $[0,T]$, there holds
$$\|r\ut(t)\|_{L^p}\leq\|r\ut_0\|_{L^p}\quad
\forall \ p\in[2,\infty],~t\in [0,T].$$
}\end{lem}
\begin{proof}
Noting that $r\ut$ satisfies the following equation
$$\pa_t(r\ut)+\wt u\cdot\wt\nabla(r\ut)-(\pa_r^2+\pa_z^2+\frac 1r\pa_r)
(r\ut)+\f2r\pa_r(r\ut)=0.$$
Then when $p<\infty$, the $L^p$ estimate directly gives
$$\|r\ut(t)\|_{L^p}+\f{4(p-1)}{p}\bigl\|\wt\nabla|r\ut|^{\f p2}
\bigr\|_{L^2_t(L^2)}^2\leq\|r\ut_0\|_{L^p}.$$
And the case for $p=\infty$ follows by taking limit $p\rightarrow\infty$
in the above estimate.
\end{proof}

\begin{lem}[A special case of Theorem $2.1$ in \cite{BT02}]\label{lemBT02}
{\sl For any regular enough vector field $u$, the following
Sobolev-Hardy type inequality holds:
$$\bigl\|r^{-\f14}u\bigr\|_{L^4}\lesssim\|\nabla u\|_{L^2}.$$
}\end{lem}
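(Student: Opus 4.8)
The plan is to deduce the vector inequality from a scalar weighted Sobolev inequality. Setting $x'=(x_1,x_2)$ so that $r=|x'|$ is the distance to the axis, the diamagnetic inequality $|\nabla|u||\le|\nabla u|$ (valid a.e.) together with the obvious identity $\|r^{-1/4}u\|_{L^4}=\|r^{-1/4}|u|\|_{L^4}$ reduces the claim to proving, for every nonnegative scalar function $f$ (which we may take in $C_c^\infty(\R^3)$, the general case following by density),
\begin{equation*}
\int_{\R^3}\frac{|f|^4}{r}\,dx\lesssim\|\nabla f\|_{L^2}^4,
\end{equation*}
whose left-hand side is precisely $\|r^{-1/4}f\|_{L^4}^4$.

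The key point is the planar identity $\dive_{x'}\bigl(\frac{x'}{|x'|}\bigr)=\frac1{|x'|}=\frac1r$ on $\R^2\setminus\{0\}$, where $\dive_{x'}$ and $\nabla_{x'}=(\pa_{x_1},\pa_{x_2})$ denote the horizontal divergence and gradient. Using this and integrating by parts in the horizontal variables (for each fixed $z$) I would write
\begin{equation*}
\int_{\R^3}\frac{|f|^4}{r}\,dx=\int_{\R^3}|f|^4\,\dive_{x'}\Bigl(\frac{x'}{|x'|}\Bigr)\,dx=-\int_{\R^3}\frac{x'}{|x'|}\cdot\nabla_{x'}\bigl(|f|^4\bigr)\,dx.
\end{equation*}
Since $\bigl|\frac{x'}{|x'|}\bigr|=1$ and $|\nabla_{x'}(|f|^4)|\le4|f|^3|\nabla_{x'}f|$, this yields
\begin{equation*}
\int_{\R^3}\frac{|f|^4}{r}\,dx\le4\int_{\R^3}|f|^3\,|\nabla_{x'}f|\,dx.
\end{equation*}

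To close the estimate I apply the Cauchy--Schwarz inequality in $x$ and then the three-dimensional Sobolev embedding $\dot H^1(\R^3)\hookrightarrow L^6(\R^3)$:
\begin{equation*}
\int_{\R^3}|f|^3|\nabla_{x'}f|\,dx\le\Bigl(\int_{\R^3}|f|^6\,dx\Bigr)^{1/2}\|\nabla_{x'}f\|_{L^2}=\|f\|_{L^6}^3\,\|\nabla_{x'}f\|_{L^2}\lesssim\|\nabla f\|_{L^2}^4,
\end{equation*}
where in the last step I used $\|\nabla_{x'}f\|_{L^2}\le\|\nabla f\|_{L^2}$ and $\|f\|_{L^6}\lesssim\|\nabla f\|_{L^2}$. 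Combining the last three displays and taking fourth roots gives exactly $\|r^{-1/4}f\|_{L^4}\lesssim\|\nabla f\|_{L^2}$, and hence the stated inequality for $u$. The matching powers are consistent with the scaling $x\mapsto\lambda x$, which is a useful check; note also that the argument makes no use of axisymmetry, so it applies to a general vector field.

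The only delicate point is the integration by parts across the singularity of the field $x'/|x'|$ on the axis $\{x'=0\}$. This is harmless: the field is bounded (of modulus $1$) and the weight $1/r$ is locally integrable in the plane $\R^2$, so excising a thin cylinder $\{r<\ve\}$ produces a boundary flux of size $O(\ve)$ that vanishes as $\ve\to0$; equivalently, the distributional divergence of $x'/|x'|$ in $\R^2$ equals the function $1/|x'|$ with no concentration at the origin. Apart from verifying this, every step is a routine application of Cauchy--Schwarz and the classical Sobolev embedding, so I expect no further obstacle.
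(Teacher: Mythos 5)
Your proof is correct, and it takes a genuinely different route from the paper, because the paper offers no proof at all: Lemma \ref{lemBT02} is justified there purely by citation, as the special case $n=3$, $k=2$ (cylindrical weight $r=|x'|$), $q=4$, $s=1$ of Theorem 2.1 in \cite{BT02}. Your argument is instead a short self-contained derivation: the pointwise inequality $|\nabla|u||\le|\nabla u|$ reduces matters to scalar $f$, then the planar identity $\dive_{x'}\bigl(x'/|x'|\bigr)=1/r$, integration by parts, Cauchy--Schwarz, and the embedding $\dot H^1(\R^3)\hookrightarrow L^6(\R^3)$ close the estimate. Each step checks out: the distributional divergence of $x'/|x'|$ in $\R^2$ is indeed the locally integrable function $1/r$ with no singular part at the axis (your excision argument, with boundary flux of size $O(\ve)$, is exactly the right justification), the exponents in Cauchy--Schwarz match since $|f|^3\in L^2$ precisely when $f\in L^6$, and the scaling of both sides agrees. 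What your route buys is transparency and self-containedness; in particular it neatly sidesteps the failure of the two-dimensional Hardy inequality (there is no bound $\|u/r\|_{L^2}\lesssim\|\nabla u\|_{L^2}$ for the codimension-two distance, so the naive H\"older-plus-Hardy interpolation is unavailable) by pairing the weight $1/r$ with the bounded vector field $x'/|x'|$ whose divergence it is. What the paper's citation buys is generality: \cite{BT02} covers a whole family of cylindrical Sobolev--Hardy inequalities, whereas your argument as written is tuned to the exponent pair $(4,2)$ --- though the same trick adapts to other exponents by replacing the pure Sobolev embedding with a Gagliardo--Nirenberg interpolation.
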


\section{The Proof of the uniform in time estimate}\label{secprop2}

The aim of this section is to show that under the assumptions of Theorem \ref{thm1.1},
all the global strong solutions to \eqref{otut}
must satisfy the following uniform in time estimate:
\begin{equation}\begin{split}\label{thm1.1.2}
\ot\in L^\infty\bigl(\R^+;L^1(\Omega)\cap L^{\frac32}\bigr)
\cap L^4\bigl(\R^+;L^2\bigr),\quad
\ut\in L^\infty\bigl(\R^+;L^2(\Omega)\bigr).
\end{split}\end{equation}
This is the first step to prove Theorem \ref{thm1.1}.
And we shall split its proof into 3 steps: (i) First we prove
$\ot\in L^4_{\rm loc}\bigl(\R^+;L^2\bigr)$, which means that for any finite $T$,
$\|\ot\|_{L^4_T(L^2)}$ is finite. (ii) Then we prove that this estimate
is uniform in $T$, i.e. $\ot$ actually lies in $L^4\bigl(\R^+;L^2\bigr)$.
(iii) At last, with the $\ot\in L^4\bigl(\R^+;L^2\bigr)$ at hand,
we shall give the other estimates in \eqref{thm1.1.2}.

\subsection{The proof of $\ot\in L^4_{\rm loc}\bigl(\R^+;L^2\bigr)$}
Let us first focus on the short-time behavior.
For any given initial data $\ot_0\in L^{\frac32},~\ut_0\in L^2(\Omega)$,
the standard local well-posedness theory guarantees the existence of some
$T_1>0$, such that \eqref{otut} has a unique global solution on $[0,T_1]$ with
\begin{equation}\begin{split}\label{3.2}
&\|\ot\|_{L^\infty_{T_1}(L^{\f32})}^{\f32}
+\bigl\|\nabla|\ot|^{\f34}\bigr\|_{L^2_{T_1}(L^2)}^2
+\bigl\|r^{-1}|\ot|^{\f34}\bigr\|_{L^2_{T_1}(L^2)}^2
\leq C_0,\\
&\|\ut\|_{L^\infty_{T_1}(L^2(\Omega))}^2
+\|\nabla\ut\|_{L^2_{T_1}(L^2(\Omega))}^2
+\bigl\|r^{-1}\ut\bigr\|_{L^2_{T_1}(L^2(\Omega))}^2
\leq C_0.
\end{split}\end{equation}
where $C_0$ denotes some positive constant depending
only on the initial data, which may be different in each appearance.
In particular, this gives
\begin{equation}\label{3.3}
\|\ot\|_{L^4_{T_1}(L^2)}
\leq\|\ot\|_{L^\infty_{T_1}(L^{\f32})}^{\f58}
\|\ot\|_{L^{\f32}_{T_1}(L^{\f92})}^{\f38}
\leq\|\ot\|_{L^\infty_{T_1}(L^{\f32})}^{\f58}
\bigl\|\nabla|\ot|^{\f34}\bigr\|_{L^2_{T_1}(L^2)}^{\f12}\leq C_0.
\end{equation}

Then we only need to show that, for any finite $T_2>T_1$:
$\|\ot\|_{L^4([T_1,T_2];L^2)}$ is finite.
To do this, let us decompose $\bigl(\ot(0),\ut(0)\bigr)$ into
$\bigl(\ot_1(0),\ut_1(0)\bigr)+\bigl(\ot_2(0),\ut_2(0)\bigr)$ with
\begin{equation}\label{3.4}
\bigl(\ot_2(0),\,\ut_2(0)\bigr)=\bigl(\ot(0)
\cdot\chi_{\{r\geq A_1\}}(r),\,\ut(0)\cdot\chi_{\{r\geq A_1\}}(r)\bigr),
\end{equation}
where $\chi$ is the characteristic funtion, and $A_1$ is so
large that for some small positive constant $\ve$ to be chosen later,
and for the constants $C_1,~C_2$ appearing in \eqref{fixsmallcondi},
there holds
$$C_1\bigl(\|\ot_2(0)\|_{L^{\frac32}}+\|\ot_2(0)\|_{L^1(\Omega)}
+\|\ut_2(0)\|_{L^2(\Omega)}\bigr)< \min\bigl\{\f1{4C_2},\f{\ve}2\bigr\}.$$
Then by Proposition \ref{globalsmall}, we know that the following Cauchy problem
\begin{equation*}
\left\{\begin{split}
& \pa_t \ot_2+(u^r_2\pa_r+u^z_2\pa_z) \ot_2-(\pa_r^2+\pa_z^2+\frac 1r\pa_r-\frac{1}{r^2})\omega^\theta_2-\f{u^r_2 \omega^\theta_2}{r}
-\f{2u^\theta_2 \pa_z u^\theta_2}{r}=0,\\
& \pa_t \ut_2+(u^r_2\pa_r+u^z_2\pa_z)\ut_2
-(\pa_r^2+\pa_z^2+\frac 1r\pa_r-\f{1}{r^2})u^\theta_2+\f{u^r_2 u^\theta_2}{r}=0,\\
& (\ot_2,\ut_2)|_{t=0}=\bigl(\ot_2(0), \ut_2(0)\bigr)
\end{split}\right.
\end{equation*}
has a unique global solution $(\ot_2,\ut_2)$ in $X_\infty$,
where $\ur_2,\uz_2$ are induced by $\ot_2$ via the Biot-Savart law.
Moreover, this solution remains small all the time satisfying
\begin{equation}\label{spaceot2ut2}
\|(\ot_2,\ut_2)\|_{X_\infty}<\ve.
\end{equation}

Now let us temporarily go back to Euclidean coordinates.
We first get, by using \eqref{spaceot2ut2}, the Biot-Savart law
and Lemma \ref{lemrut} that for any $t>0$, there holds
\begin{equation}\label{u2L3}
\|u_2(t)\|_{L^3}
\leq C\|\ot_2(t)\|_{L^{\frac32}}+\|\ut_2(t)\|_{L^2(\Omega)}^{\f23}
\|r\ut_2(t)\|_{L^\infty}^{\f13}
\leq C\ve+\ve^{\f23}\|r\ut(0)\|_{L^\infty}^{\f13},
\end{equation}
which is sufficiently small. And the remainder $(\ot_1,\ut_1)=(\ot,\ut)-(\ot_2,\ut_2)$
satisfies
\begin{equation}\label{5.2}
\pa_t u_1+u\cdot\nabla u_1+u_1\cdot\nabla u_2
-\D u_1+\nabla P_1=0,\quad\dive u_1=0.
\end{equation}
In view of Lemma \ref{lemuruz} and the assumptions on $(\ot(0),\ut(0))$, we have
\begin{equation}\begin{split}\label{initialu1}
&\|\ut_1(0)\|_{L^2}=\bigl\|r^{\f12}\ut(0)\cdot\chi_{\{r< A_1\}}\bigr\|_{L^2(\Omega)}
\leq A_1^{\f12}\|\ut(0)\|_{L^2(\Omega)}<\infty,\\
\|\wt u_1(0)&\|_{L^2}\lesssim\bigl\|r^{\f12}\ot_1(0)\bigr\|_{L^{\f32}}
=\bigl\|r^{\f12}\ot(0)\cdot\chi_{\{r< A_1\}}\bigr\|_{L^{\f32}}
\leq A_1^{\f12}\|\ot(0)\|_{L^{\f32}}<\infty.
\end{split}\end{equation}
Hence we can apply $L^2$ energy estimates to \eqref{5.2} to get
$$\f12\f{d}{dt}\|u_1\|_{L^2}^2+\|\nabla u_1\|_{L^2}^2
=-\int_{\R^3}(u_1\cdot\nabla u_2)\cdot u_1\,dx.$$
In view of $\dive u_1=0$ and the bound \eqref{u2L3},
the right-hand side can be estimated as
$$-\int_{\R^3}(u_1\cdot\nabla u_2)\cdot u_1\,dx
=\int_{\R^3}(u_1\otimes u_2):\nabla u_1\,dx
\leq C\|u_2\|_{L^3}\|\nabla u_1\|_{L^2}^2
<\f12\|\nabla u_1\|_{L^2}^2.$$
As a result, we achieve
\begin{equation}\label{6.4}
\|u_1\|_{L^\infty_t(L^2)}^2+\|\nabla u_1\|_{L^2_t(L^2)}^2
\leq \|u_1(0)\|_{L^2}^2,\quad\forall\ t>0.
\end{equation}

Now let us focus on the time interval $[T_1, T_2]$.
Using \eqref{6.4}, we have
$$\|u_1\|_{L^4_t(L^3)}
\leq\|u_1\|_{L^\infty_t(L^2)}^{\f12}
\|\nabla u_1\|_{L^2_t(L^2)}^{\f12}
\leq C_0.$$
Then for any $t_0\in[T_1,T_2]$ and
$\mathbf{r}^2=\min\bigl\{1,\f12 T_1\bigr\}$,
H\"older's inequality gives
$$\|u_1\|_{L^3_{t,x}((t_0-\mathbf{r}^2,t_0]\times\R^3)}
\leq\|u_1\|_{L^4((t_0-\mathbf{r}^2,t_0];L^3(\R^3))}
\leq C_0.$$
On the other hand, in view of \eqref{u2L3}, the similar bound
also holds for $u_2$, namely
$$\|u_2\|_{L^3_{t,x}((t_0-\mathbf{r}^2,t_0]\times\R^3)}
\leq\|u_2\|_{L^\infty((t_0-\mathbf{r}^2,t_0];L^3)}
\leq C\ve+\ve^{\f23}\|r\ut(0)\|_{L^\infty}^{\f13}.$$
As a result, for $u=u_1+u_2$, there holds
$$\|u\|_{L^3_{t,x}((t_0-\mathbf{r}^2,t_0]\times\R^3)}
\leq C_0.$$
Then we can appropriately
choose some pressure $P=-\D^{-1}\dive\dive(u\otimes u)$ so that
\begin{align*}
\|P\|_{L^{\f32}_{t,x}((t_0-\mathbf{r}^2]\times\R^3)}
\lesssim\|u\|_{L^3_{t,x}((t_0-\mathbf{r}^2]\times\R^3)}^2\leq C_0.
\end{align*}
Thus for any small constant $\ve>0$, we can find some
compact set $K_1\subset\R^3$ such that
\begin{equation}\begin{split}\label{ep1}
\|u&\|_{L^3_{t,x}((t_0-\mathbf{r}^2,t_0]\times(\R^3\setminus K_1))}
+\|P\|_{L^{\f32}_{t,x}((t_0-\mathbf{r}^2]\times(\R^3\setminus K_1))}\\
&=\|u\|_{L^3(\R^3\setminus K_1;L^3(t_0-\mathbf{r}^2,t_0))}
+\|P\|_{L^{\f32}(\R^3\setminus K_1;L^{\f32}(t_0-\mathbf{r}^2,t_0))}
<\ve.
\end{split}\end{equation}
Then we find a little larger subset $K_2\subset\R^3$ such that
$$z_0=(t_0,x_0)\in[T_1,T_2]\times(\R^3\setminus K_2)\Rightarrow
Q_{\mathbf{r}}(z_0)\subset(t_0-\mathbf{r}^2,t_0]\times(\R^3\setminus K_1),$$
where $Q_{\mathbf{r}}(z_0)=B_r(x_0)\times(t_0-{\mathbf{r}}^2,t_0]$.
Then for such $z_0$, \eqref{ep1} implies
\begin{equation}\label{ep2}
\|u\|_{L^3_{t,x}(Q_{\mathbf{r}}(z_0))}
+\|P\|_{L^{\f32}_{t,x}(Q_{\mathbf{r}}(z_0))}
<\ve.
\end{equation}

Now in view of the estimate \eqref{ep2},
we can apply $\ve$-regularity criterion (see \cite{CKN, Lin})
to get some constant $C_0(T_1,T_2,K_2)$ which
depends on the initial data, $T_1,~T_2$ and $K_2$ such that
\begin{equation}\label{4.121}
\|u\|_{L^\infty_{t,x}([T_1,T_2]\times(\R^3\setminus K_2))}
+\|\nabla u\|_{L^\infty_{t,x}([T_1,T_2]\times(\R^3\setminus K_2))}
\leq C_0(T_1,T_2,K_2).
\end{equation}
While by assumption, $u$ is H\"older continuous at any point in $\R^+\times\R^3$.
Thus $|u|$ and $|\nabla u|$ are uniformly bounded
on the compact set $[T_1,T_2]\times K_2$,
which together with \eqref{4.121} implies
\begin{equation}\label{4.12}
\|u\|_{L^\infty_{t,x}([T_1,T_2]\times\R^3)}
+\|\nabla u\|_{L^\infty_{t,x}([T_1,T_2]\times\R^3)}
\leq C_0(T_1,T_2,K_2).
\end{equation}

Next, by applying $L^{\f32}$ energy estimate to $\ot$
in \eqref{otut}, we get
\begin{equation}\begin{split}\label{3.11}
&\f{d}{dt}\|\ot\|_{L^{\f32}}^{\f32}
+\bigl\|\nabla|\ot|^{\f34}\bigr\|_{L^2}^2
+\bigl\|r^{-1}|\ot|^{\f34}\bigr\|_{L^2}^2
\leq C\int_{\R^3}\bigl(\f{|u^r\ot|}{r}+\f{|\ut\pa_z\ut|}{r}\bigr)
\cdot|\ot|^{\f12}\,dx\\
&\leq C\|\ur\|_{L^\infty}\bigl\|r^{-1}|\ot|^{\f34}\bigr\|_{L^2}
\bigl\||\ot|^{\f34}\bigr\|_{L^2}
+C\|\ut\|_{L^2(\Omega)}\|\pa_z\ut\|_{L^2(\Omega)}
\|\ot\|_{L^\infty}^{\f12}\\
&\leq \f14\bigl(\bigl\|r^{-1}|\ot|^{\f34}\bigr\|_{L^2}^2
+\|\pa_z\ut\|_{L^2(\Omega)}^2\bigr)
+C\bigl(\|\ot\|_{L^{\f32}}^{\f32}
+\|\ut\|_{L^2(\Omega)}^2\bigr)\bigl(\|\ur\|_{L^\infty}^2
+\|\ot\|_{L^\infty}\bigr).
\end{split}\end{equation}
And by applying $L^2(\Omega)$ energy estimate to $\ut$
in \eqref{otut} that
\begin{align*}
\frac12\f{d}{dt}\|\ut\|_{L^2(\Omega)}^2
+\|\nabla\ut\|_{L^2(\Omega)}^2+\bigl\|\f{\ut}r\bigr\|_{L^2(\Omega)}^2
&=\int_{\Omega}\Bigl(\f{\pa_r\ut}{r}-\f{u^r u^\theta}{r}
-(u^r\pa_r+u^z\pa_z)\ut\Bigr)\cdot\ut\,drdz\\
&=\int_{\Omega}\Bigl(\f{\pa_r\ut}{r}-\f32\f{u^r u^\theta}{r}\Bigr)\cdot\ut\,drdz,
\end{align*}
where in the last step we have used integration by parts
and the divergence-free condition.
Furthermore, by using the fact that $\ut|_{r=\infty}=0$,
integration by parts gives
$$\int_{\Omega}\f{\pa_r\ut}{r}\cdot\ut\,drdz
=-\int_{\R}\f{|\ut|^2}r\Big|_{r=0}\,dz
+\f12\bigl\|\f{\ut}r\bigr\|_{L^2(\Omega)}^2
\leq\f12\bigl\|\f{\ut}r\bigr\|_{L^2(\Omega)}.$$
As a result, we achieve
\begin{equation}\begin{split}\label{3.12}
\f{d}{dt}\|\ut\|_{L^2(\Omega)}^2
+\|\nabla\ut\|_{L^2(\Omega)}^2+\bigl\|\f{\ut}r\bigr\|_{L^2(\Omega)}^2
&\leq C\Bigl|\int_{\Omega}\f{u^r u^\theta}{r}\cdot\ut\,drdz\Bigr|\\
&\leq C\|\ur\|_{L^\infty}\|\ut\|_{L^2(\Omega)}
\bigl\|\f{\ut}r\bigr\|_{L^2(\Omega)}^2\\
&\leq \f14\bigl\|\f{\ut}r\bigr\|_{L^2(\Omega)}^2
+C\|\ur\|_{L^\infty}^2\|\ut\|_{L^2(\Omega)}^2.
\end{split}\end{equation}
By putting the estimates \eqref{3.11} and \eqref{3.12} together,
and then applying Gronwall's inequality,
using the estimates \eqref{3.2} and \eqref{4.12}, we can obtain
\begin{align*}
&\|\ot(t)\|_{L^{\f32}}^{\f32}
+\|\ut(t)\|_{L^2(\Omega)}^2
+\int_{T_1}^t\Bigl(\bigl\|\nabla|\ot|^{\f34}\bigr\|_{L^2}^2
+\bigl\|\nabla \ut\bigr\|_{L^2(\Omega)}^2\Bigr)(t')\,dt'\\
&\leq\bigl(\|\ot(T_1)\|_{L^{\f32}}^{\f32}
+\|\ut(T_1)\|_{L^2(\Omega)}^2\bigr)
\exp\Bigl((t-T_1)\bigl(\|\ur\|_{L^\infty([T_1,t];L^\infty)}^2
+\|\ot\|_{L^\infty([T_1,t];L^\infty)}\bigr)\Bigr)\\
&\leq C_0(T_1,T_2,K_2),
\quad\forall\ t\in[T_1,T_2].
\end{align*}
In particular, this implies
\begin{equation}\label{3.14}
\|\ot\|_{L^4([T_1,T_2];L^2)}
\leq\|\ot\|_{L^\infty([T_1,T_2];L^{\f32})}^{\f58}
\bigl\|\nabla|\ot|^{\f34}\bigr\|_{L^2([T_1,T_2];L^2)}^{\f12}
\leq C_0(T_1,T_2,K_2).
\end{equation}

Now by combining the estimates \eqref{3.3} and \eqref{3.14} together, we deduce
$\ot\in L^4([0,T_2];L^2)$, which is exactly the desired
$\ot\in L^4_{\rm loc}\bigl(\R^+;L^2\bigr)$ as $T_2$ can be arbitrary.

\subsection{The proof of $\ot\in L^4\bigl(\R^+;L^2\bigr)$}
The energy inequality \eqref{6.4} implies $\|\nabla u_1(t)\|_{L^2}^2$
is small for almost every large time $t$,
which ensures the existence of some $t_1>0$ such that
\begin{equation}\label{6.5}
\|\ot_1(t_1)\|_{L^2}<A_1^{-\f12}\ve\andf
\bigl\|\f{\ut_1}{r}(t_1)\bigr\|_{L^2}<A_1^{-\f12}\ve,
\end{equation}
where $A_1$ is the cut-off point appearing in \eqref{3.4}.

In the following, we shall resolve $(\ot_1,\ut_1)$ from $t_1$, which satisfies
\begin{equation}\label{ot1ut1}
\left\{\begin{split}
& \pa_t \ot_1+\wt u\cdot\wt\nabla\ot_1+\wt u_1\cdot\wt\nabla\ot_2
-(\D-\f{1}{r^2})\omega^\theta_1
-\frac{u^r_1\ot+u^r_2 \ot_1}{r}
-\frac{2u^\theta_2 \pa_z u^\theta_1+2u^\theta_1 \pa_z\ut}{r}=0, \\
& \pa_t \ut_1+\wt u\cdot\wt\nabla\ut_1+\wt u_1\cdot\wt\nabla\ut_2
-(\D-\f{1}{r^2})\ut_1
+\frac{u^r_1 u^\theta_1+\ur_1\ut_2+\ur_2\ut_1}{r}=0.
\end{split}\right.
\end{equation}
For any $t>t_1$, let us introduce
$$E_1^2(t)\eqdef\|\ot_1\|_{L^\infty([t_1,t);L^2)}^2
+\|\nabla\ot_1\|_{L^2([t_1,t);L^2)}^2
+\bigl\|\f{\ot_1}{r}\bigr\|_{L^2([t_1,t);L^2)}^2,$$
and
$$E_2^2(t)\eqdef\bigl\|\f{\ut_1}r\bigr\|_{L^\infty([t_1,t);L^2)}^2
+\bigl\|\nabla\f{\ut_1}r\bigr\|_{L^2([t_1,t);L^2)}^2.$$

The $L^2$ estimate of $\ot_1$ in \eqref{ot1ut1} gives
\begin{equation}\begin{split}\label{6.6}
\f12\f{d}{dt}\|\ot_1\|_{L^2}^2+\|\nabla\ot_1\|_{L^2}^2
+\bigl\|\f{\ot_1}{r}&\bigr\|_{L^2}^2
=\int_{\R^3}\bigl(-\wt u_1\cdot\wt\nabla\ot_2+\f{\ur_1(\ot_1+\ot_2)}{r}
+\f{\ur_2\ot_1}{r}\bigr)\ot_1\,dx\\
&+\int_{\R^3}\f{2\ut_2\pa_z\ut_1+2\ut_1(\pa_z\ut_1+\pa_z\ut_2)}{r}\ot_1\,dx
\eqdef\cD_1+\cD_2.
\end{split}\end{equation}
By using H\"older's inequality, Sobolev's embedding theorem
and Lemma \ref{lemuruz}, we have
\begin{align*}
|\cD_1|&\leq\|\wt u_1\|_{L^3}\|\nabla\ot_2\|_{L^2}\|\ot_1\|_{L^6}
+\|\ur_1\|_{L^3}\bigl(\|\ot_1\|_{L^6}+\|\ot_2\|_{L^6}\bigr)
\bigl\|\f{\ot_1}{r}\bigr\|_{L^2}
+\|\ur_2\|_{L^3}\|\ot_1\|_{L^6}\bigl\|\f{\ot_1}{r}\bigr\|_{L^2}\\
&\lesssim\|\wt u_1\|_{L^2}^{\f12}\|\ot_1\|_{L^2}^{\f12}
\bigl(\|\nabla\ot_1\|_{L^2}+\|\nabla\ot_2\|_{L^2}\bigr)
\bigl(\|\nabla\ot_1\|_{L^2}+\bigl\|\f{\ot_1}{r}\bigr\|_{L^2}\bigr)
+\|\ot_2\|_{L^{\f32}}\|\nabla\ot_1\|_{L^2}\bigl\|\f{\ot_1}{r}\bigr\|_{L^2}.
\end{align*}
For the second term $\cD_2$, integration by parts gives
$$\int_{\R^3}\f{2\ut_2\pa_z\ut_1}{r}\ot_1\,dx
+\int_{\R^3}\f{2\ut_1\pa_z\ut_2}{r}\ot_1\,dx
=-\int_{\R^3}\f{2\ut_1\ut_2}{r}\pa_z\ot_1\,dx.$$
In view of this, we can obtain
\begin{align*}
|\cD_2|&\leq \|\ut_1\|_{L^3}
\bigl\|\pa_z\f{\ut_1}{r}\bigr\|_{L^2}\|\ot_1\|_{L^6}
+\|\ut_2\|_{L^3}\bigl\|\f{\ut_1}{r}\bigr\|_{L^6}\|\pa_z\ot_1\|_{L^2}\\
&\lesssim\bigl(\|\ut_1\|_{L^2}^{\f13}
\bigl\|\f{\ut_1}{r}\bigr\|_{L^2}^{\f13}\|r\ut_1\|_{L^\infty}^{\f13}
+\|\ut_2\|_{L^2(\Omega)}^{\f23}\|r\ut_2\|_{L^\infty}^{\f13}\bigr)
\bigl\|\nabla\f{\ut_1}{r}\bigr\|_{L^2}\|\nabla\ot_1\|_{L^2}.
\end{align*}
Now by substituting the above two estimates into \eqref{6.6},
integrating in time,
and then using the bounds \eqref{spaceot2ut2},~\eqref{6.4},~\eqref{6.5}
and Lemma \ref{lemrut}, we deduce for any $t>t_1$, there holds
\begin{equation}\begin{split}\label{6.7}
E_1^2\lesssim A_1^{-1}\ve^2+\|u_1(0)\|_{L^2}^{\f12}E_1^{\f32}(E_1+\ve)
+\ve E_1^2+\bigl(\|u_1(0)\|_{L^2}^{\f13}E_2^{\f13}
+\ve^{\f23}\bigr)\|r\ut(0)\|_{L^\infty}^{\f13}E_1E_2.
\end{split}\end{equation}

On the other hand, we can derive from \eqref{ot1ut1} that
${\ut_1}/{r}$ satisfies
\begin{equation}\label{eqtut1r}
\pa_t\bigl(\f{\ut_1}{r}\bigr)+\wt u\cdot\wt\nabla\bigl(\f{\ut_1}{r}\bigr)
+\wt u_1\cdot\wt\nabla\bigl(\f{\ut_2}{r}\bigr)
-\D\bigl(\f{\ut_1}{r}\bigr)
-\f2r\pa_r\bigl(\f{\ut_1}{r}\bigr)
+2\f{u^r u^\theta_1+\ur_1\ut_2}{r^2}=0.
\end{equation}
In view of the fact that $\ut_1|_{r=\infty}=0$, integration by parts gives
$$-\int_{\R^3}\f2r\pa_r\f{\ut_1}{r}\cdot\f{\ut_1}{r}\,dx
=-2\pi\int_{\Omega}\pa_r\bigl|\f{\ut_1}{r}\bigr|^2\,drdz
=2\pi\int_{\R}\bigl|\f{\ut_1}{r}\bigr|^2\,dz\Big|_{r=0}\geq0.$$
By using this, taking $L^2$ inner product of \eqref{eqtut1r}
with $\ut_1/r$ gives
\begin{equation}\begin{split}\label{6.8}
\f12\f{d}{dt}\bigl\|\f{\ut_1}{r}\bigr\|_{L^2}^2
+\bigl\|\nabla\f{\ut_1}{r}\bigr\|_{L^2}^2
\leq \int_{\R^3}\bigl(\wt u_1\otimes\f{\ut_2}{r}\bigr)
:\wt\nabla\f{\ut_1}{r}\,dx
-2\int_{\R^3}\f{\ur \ut_1+\ur_1\ut_2}{r^2}
\cdot\f{\ut_1}{r}\,dx.
\end{split}\end{equation}

For the terms on the right-hand side of \eqref{6.8}, we first get
\begin{align*}
\Bigl|\int_{\R^3}\f{\ur u^\theta_1}{r^2}\cdot\f{\ut_1}{r}\,dx\Bigr|
&\leq\bigl(\|\ur_1\|_{L^2}^{\f12}\bigl\|\f{\ur_1}{r}\bigr\|_{L^2}^{\f12}
+\bigl\|\f{\ur_2}{\sqrt r}\bigr\|_{L^2}\bigr)
\bigl\|\f{\ut_1}{r\cdot r^{\f14}}\bigr\|_{L^4}^2\\
&\leq\bigl(\|u_1(0)\|_{L^2}^{\f12}\|\ot_1\|_{L^2}^{\f12}
+\|\ot_2\|_{L^{\f32}}\bigr)
\bigl\|\nabla\f{\ut_1}{r}\bigr\|_{L^2}^2,
\end{align*}
where we have used \eqref{6.4},
Lemmas \ref{lemuruz},~\ref{lemur} and \ref{lemBT02} in the last step.
Similarly, we have
\begin{align*}
\int_{\R^3}\bigl|\f{\ur_1 u^\theta_2}{r^2}\cdot\f{\ut_1}{r}\bigr|
+\bigl|\bigl(\wt u_1\otimes\f{\ut_2}{r}\bigr)
:\wt\nabla\f{\ut_1}{r}\bigr|\,dx
&\leq\bigl\|\f{\ur_1}{r}\bigr\|_{L^{\f{30}{13}}}
\bigl\|\f{\ut_2}{r}\bigr\|_{L^{\f52}}\bigl\|\f{\ut_1}{r}\bigr\|_{L^6}
+\|\wt u_1\|_{L^{10}}
\bigl\|\f{\ut_2}{r}\bigr\|_{L^{\f52}}\bigl\|\wt\nabla\f{\ut_1}{r}\bigr\|_{L^2}\\
&\lesssim\|\ot_1\|_{L^{\f{30}{13}}}\bigl\|r^{-\f35}\ut_2\bigr\|_{L^{\f52}(\Omega)}
\bigl\|\nabla\f{\ut_1}{r}\bigr\|_{L^2}\\
&\lesssim\|\ot_1\|_{L^2}^{\f45}\|\nabla\ot_1\|_{L^2}^{\f15}
\bigl\|r^{-\f35}\ut_2\bigr\|_{L^{\f52}(\Omega)}
\bigl\|\nabla\f{\ut_1}{r}\bigr\|_{L^2}.
\end{align*}
Substituting the above estimates into \eqref{6.8},
and then integrating the resulting inequality in time,
we deduce for any $t>t_1$, there holds
\begin{equation}\begin{split}\label{6.9}
E_2^2\lesssim A_1^{-1}\ve^2+\|u_1(0)\|_{L^2}^{\f12}E_1^{\f12}E_2^2
+\ve E_1E_2+\ve E_2^2.
\end{split}\end{equation}

Now by combining the estimates \eqref{6.8} and \eqref{6.9} together,
as well as the bound \eqref{initialu1} for $\|u_1(0)\|_{L^2}$,
a standard continuity argument guarantees the existence of some
small positive constant $\ve_0$, which only depends on
$\|\ut(0)\|_{L^2(\Omega)},~
\|\ot(0)\|_{L^{\f32}}$ and $\|r\ut(0)\|_{L^\infty}$,
such that whenever $\ve<\ve_0$, then
there exists a universal constant $C_3$ such that
$$E_1^2(t)+E_2^2(t)\leq C_3 A_1^{-1}\ve^2\cdot\max\bigl\{1,\|\ut(0)\|_{L^2(\Omega)},
\|\ot(0)\|_{L^{\f32}},\|r\ut(0)\|_{L^\infty}\bigr\}.$$
In particular, this gives the following uniform in time estimate:
$$\|\ot_1(t)\|_{L^2}^2 \leq C_3 A_1^{-1}\ve^2\cdot\max\bigl\{1,\|\ut(0)\|_{L^2(\Omega)},
\|\ot(0)\|_{L^{\f32}},\|r\ut(0)\|_{L^\infty}\bigr\},\quad\forall\ t>t_1.$$
Combining this with the energy inequality \eqref{6.4} which says
$$\|\ot_1\|_{L^2([t_1,\infty);L^2)}^2\leq\|\nabla u_1\|_{L^2([t_1,\infty);L^2)}^2
\leq C\|u_1(0)\|_{L^2}^2,$$
the fact $\ot\in L^4_{{\rm loc}}(\R^+;L^2)$,
and the bound \eqref{spaceot2ut2} for $\ot_2$, we obtain
\begin{equation}\begin{split}\label{uniot}
\|\ot\|_{L^4(\R^+;L^2)}
&\leq\|\ot\|_{L^4([0,t_1);L^2)}+\|\ot_1\|_{L^4([t_1,\infty);L^2)}
+\|\ot_2\|_{L^4([t_1,\infty);L^2)}\\
&\leq\|\ot\|_{L^4([0,t_1);L^2)}
+\|\ot_1\|_{L^2([t_1,\infty);L^2)}^{\f12}\|\ot_1\|_{L^\infty([t_1,\infty);L^2)}^{\f12}
+\ve<\infty.
\end{split}\end{equation}

\subsection{The proof of \eqref{thm1.1.2}}
With the previously proved $\ot\in L^4\bigl(\R^+;L^2\bigr)$ at hand,
the remaining estimates in \eqref{thm1.1.2} can be proved by standard energy method.
So we just give a sketch.

We first get, by using the first line of \eqref{3.12} that
$$\f{d}{dt}\|\ut\|_{L^2(\Omega)}^2
+\|\nabla\ut\|_{L^2(\Omega)}^2+\bigl\|\f{\ut}r\bigr\|_{L^2(\Omega)}^2
\leq C\Bigl|\int_{\Omega}\f{u^r u^\theta}{r}\cdot\ut\,drdz\Bigr|.$$
Then Lemma \ref{lemuruz} and Sobolev embedding theorem on the upper half plane give
\begin{align*}
C\Bigl|\int_{\Omega}\f{u^r\ut}r\cdot\ut\,drdz\Bigr|
&\leq C\|\ur\|_{L^4(\Omega)}
\bigl\|\f{\ut}r\bigr\|_{L^2(\Omega)}\|\ut\|_{L^4(\Omega)}\\
&\leq C\bigl\|r^{\f12}\ot\bigr\|_{L^2(\Omega)}
\bigl\|\f{\ut}r\bigr\|_{L^2(\Omega)}
\|\ut\|_{L^2(\Omega)}^{\f12}\|\nabla\ut\|_{L^2(\Omega)}^{\f12}\\
&\leq \f14\bigl(\|\nabla\ut\|_{L^2(\Omega)}^2
+\bigl\|\f{\ut}r\bigr\|_{L^2(\Omega)}\bigr)
+C\|\ut\|_{L^2(\Omega)}^2\|\ot\|_{L^2}^4.
\end{align*}
As a result, we achieve
$$\f{d}{dt}\|\ut\|_{L^2(\Omega)}^2
+\|\nabla\ut\|_{L^2(\Omega)}^2+\bigl\|\f{\ut}r\bigr\|_{L^2(\Omega)}^2
\lesssim\|\ut\|_{L^2(\Omega)}^2\|\ot\|_{L^2}^4.$$
Then Gronwall's inequality together with the bound \eqref{uniot} implies
that for any $t>0$, there holds uniformly
\begin{equation}\label{ut2L2Omega3}
\|\ut(t)\|_{L^2(\Omega)}^2
+\|\nabla\ut\|_{L^2_tL^2(\Omega)}^2+\bigl\|\f{\ut}r\bigr\|_{L^2_tL^2(\Omega)}^2
\leq\|\ut(0)\|_{L^2(\Omega)}^2\exp\bigl(C\|\ot\|_{L^4(\R^+;L^2)}\bigr).
\end{equation}

Next, by applying $L^{\f32}$ energy estimate to $\ot$
in \eqref{otut} gives
\begin{align*}
\f{d}{dt}\|\ot&\|_{L^{\f32}}^{\f32}
+\bigl\|\nabla|\ot|^{\f34}\bigr\|_{L^2}^2
+\bigl\|r^{-1}|\ot|^{\f34}\bigr\|_{L^2}^2
\leq C\int_{\R^3}\bigl(\f{|\ut\pa_z\ut|}{r}+\f{|u^r\ot|}{r}\bigr)
\cdot|\ot|^{\f12}\,drdz\\
&\leq C\bigl\||\ot|^{\f12}\bigr\|_{L^3}
\bigl\|r^{-\f12}\pa_z\ut\bigr\|_{L^2}\bigl\|r^{-\f12}\ut\bigr\|_{L^6}
+C\bigl\||\ot|^{\f34}\bigr\|_{L^3}\bigl\||\ot|^{\f34}\bigr\|_{L^6}
\bigl\|\f{\ur}r\bigr\|_{L^2}\eqdef\cG_1+\cG_2.
\end{align*}
Noticing that $\ut$ is independent of $\theta$, we have
$$\bigl\|r^{-\f12}\ut\bigr\|_{L^6}
\lesssim\bigl\|\nabla\bigl(r^{-\f12}\ut\bigr)\bigr\|_{L^2}
=\bigl\|\wt\nabla\bigl(r^{-\f12}\ut\bigr)\bigr\|_{L^2}
\thicksim\|\nabla\ut\|_{L^2(\Omega)}+\|r^{-1}\ut\|_{L^2(\Omega)}.$$
Thus we can obtain
$$\cG_1\leq C\bigl(1+\|\ot\|_{L^{\f32}}^{\f32}\bigr)
\bigl(\|\nabla\ut\|_{L^2(\Omega)}^2+\|r^{-1}\ut\|_{L^2(\Omega)}^2\bigr).$$
While by using Lemma \ref{lemur} and Sobolev embedding theorem, we deduce
\begin{align*}
\cG_2\leq C\bigl\||\ot|^{\f34}\bigr\|_{L^2}^{\f12}
\bigl\|\nabla|\ot|^{\f34}\bigr\|_{L^2}^{\f32}\|\ot\|_{L^2}
\leq \f12\bigl\|\nabla|\ot|^{\f34}\bigr\|_{L^2}^2
+C\bigl\||\ot|^{\f34}\bigr\|_{L^2}^2\|\ot\|_{L^2}^4.
\end{align*}
As a result, we can achieve
$$\f{d}{dt}\|\ot\|_{L^{\f32}}^{\f32}
+\bigl\|\nabla|\ot|^{\f34}\bigr\|_{L^2}^2
+\bigl\|r^{-1}|\ot|^{\f34}\bigr\|_{L^2}^2
\lesssim\bigl(1+\|\ot\|_{L^{\f32}}^{\f32}\bigr)
\bigl(\|\nabla\ut\|_{L^2(\Omega)}^2+\bigl\|\f{\ut}r\bigr\|_{L^2(\Omega)}^2
+\|\ot\|_{L^2}^4\bigr).$$
Then Gronwall's inequality together with the bounds \eqref{uniot}
and \eqref{ut2L2Omega3} leads to
\begin{equation}\begin{split}\label{otL32}
\|\ot(t)\|_{L^{\f32}}^{\f32}
+&\bigl\|\nabla|\ot|^{\f34}\bigr\|_{L^2_t(L^2)}^2
+\bigl\|r^{-1}|\ot|^{\f34}\bigr\|_{L^2_t(L^2)}^2\\
&\leq\|\ot(0)\|_{L^{\f32}}^{\f32}\exp\bigl(C\|\ut(0)\|_{L^2(\Omega)}^2
\exp(C\|\ot\|_{L^4(\R^+;L^2)})\bigr),\quad\forall\ t>0.
\end{split}\end{equation}

The only remaining thing is the estimate for $\|\ot\|_{L^\infty_t(L^1(\Omega))}$.
Let us write the equation
\begin{equation}\label{otr}
\pa_t \f{\ot}{r}+(u^r\pa_r+u^z\pa_z) \f{\ot}{r}
-(\Delta+\frac 2r\pa_r)\f{\ot}{r}-\f{2\ut\pa_z\ut}{r^2}=0,
\end{equation}
Then by multiplying ${\rm sgn}(\ot)$ on both sides of \eqref{otr},
integrating the resulting equality on $\R^3$, and using the divergence-free condition
so that the convection term vanishes, we obtain
\begin{align*}
\f{d}{dt}\bigl\|\f{\ot}r\bigr\|_{L^1}
-\int_{\R^3}\Delta\f{\ot}r\cdot{\rm sgn}(\ot)\,dx
-\int_{\R^3}\f{2\pa_r}r\bigl(\f{\ot}r\bigr)\cdot{\rm sgn}(\ot)\,dx
-\int_{\R^3}\f{2\ut\pa_z\ut}{r^2}{\rm sgn}(\ot)\,dx=0.
\end{align*}
In which, by using the fact that $\ot$ vanishes at infinity, we have
$$-\int_{\R^3}\f{2\pa_r}r\bigl(\f{\ot}r\bigr)\cdot{\rm sgn}(\ot)\,dx
=4\pi\int_{\R}\f{|\ot|}r\Big|_{r=0}\,dz\geq0.$$
On the other hand, it is well-known that
$$-\int_{\R^3}\Delta f\cdot{\rm sgn}(f)\,dx\geq0$$
for any regular enough function $f$.
As a result, we can deduce
\begin{align*}
\f{d}{dt}\bigl\|\f{\ot}r\bigr\|_{L^1}
\leq\int_{\R^3}\f{2\ut\pa_z\ut}{r^2}{\rm sgn}(\ot)\,dx
\leq C\|\nabla\ut\|_{L^2(\Omega)}\bigl\|\f{\ut}r\bigr\|_{L^2(\Omega)}.
\end{align*}
Integrating in time, together with the estimate \eqref{ut2L2Omega3} gives
$$\|\ot(t)\|_{L^1(\Omega)}\leq\|\ot(0)\|_{L^1(\Omega)}
+C\|\ut(0)\|_{L^2(\Omega)}^2\exp\bigl(C\|\ot\|_{L^4(\R^+;L^2)}\bigr),
\quad\forall\ t>0.$$
Combining this and the estimates \eqref{uniot}-\eqref{otL32}
completes the proof of \eqref{thm1.1.2}.

\section{The Proof of the decay estimate \eqref{thm1.1.3}}\label{secprop1}

This section is devoted to the proof of the decay estimate \eqref{thm1.1.3}.
As we have proved in the previous section, the global solution $(\ot,\ut)$ satisfies
the uniform in time estimate \eqref{thm1.1.2}.
Hence for any $\varepsilon>0$, we can find some
positive time $t_2$ such that
\begin{equation}\label{5.1}
\ot(t_2)\in L^1(\Omega)\cap L^{\frac32}
\cap L^2,\quad\ut(t_2)\in L^2(\Omega),\andf
\|\ot\|_{L^4([t_2,\infty);L^2)}<\ve.
\end{equation}
Then we decompose $\bigl(\ot(t_2),\ut(t_2)\bigr)$ as
$\bigl(\ot_3(0),\ut_3(0)\bigr)+\bigl(\ot_4(0),\ut_4(0)\bigr)$ with
$$\bigl(\ot_4(0),\,\ut_4(0)\bigr)=\bigl(\ot(t_2)
\cdot\chi_{\{r\geq A_2\}}(r),\,\ut(t_2)\cdot\chi_{\{r\geq A_2\}}(r)\bigr),$$
where $\chi$ is the characteristic funtion,
and $A_2$ is some sufficiently large constant such that
 for the constants $C_1,~C_2$ appearing in \eqref{fixsmallcondi},
$\bigl(\ot_4(0),\ut_4(0)\bigr)$ are small enough satisfying
$$C_1\bigl(\|\ot_4(0)\|_{L^{\frac32}}+\|\ot_4(0)\|_{L^1(\Omega)}
+\|\ut_4(0)\|_{L^2(\Omega)}\bigr)< \min\bigl\{\f1{4C_2},\frac{\ve}2\bigr\}.$$
Then by Proposition \ref{globalsmall}, we know that the following Cauchy problem
\begin{equation*}
\left\{\begin{split}
& \pa_t \ot_4+(u^r_4\pa_r+u^z_4\pa_z) \ot_4-(\pa_r^2+\pa_z^2+\frac 1r\pa_r-\frac{1}{r^2})\omega^\theta_4-\f{u^r_4 \omega^\theta_4}{r}
-\f{2u^\theta_4 \pa_z u^\theta_4}{r}=0,\\
& \pa_t \ut_4+(u^r_4\pa_r+u^z_4\pa_z)\ut_4
-(\pa_r^2+\pa_z^2+\frac 1r\pa_r-\f{1}{r^2})u^\theta_4+\f{u^r_4 u^\theta_4}{r}=0,\\
& (\ot_4,\ut_4)|_{t=0}=\bigl(\ot_4(0), \ut_4(0)\bigr),
\end{split}\right.
\end{equation*}
has a unique global solution $(\ot_4,\ut_4)$ in $X_\infty$,
which remains small all the time satisfying
\begin{equation}\label{spaceot4ut4}
\|(\ot_4,\ut_4)\|_{X_\infty}<\ve.
\end{equation}
Furthermore, exactly along the same line to the proof of \eqref{ut2L2Omega3},
we can obtain
\begin{equation}\label{addiut4}
\|\nabla\ut_4\|_{L^2_tL^2(\Omega)}+\|r^{-1}\ut_4\|_{L^2_tL^2(\Omega)}
\leq\ve.
\end{equation}

On the other hand, the remainder $(\ot_3,\ut_3)(t)=(\ot,\ut)(t+t_2)-(\ot_4,\ut_4)(t)$
satisfies
\begin{equation}\label{ot3ut3}
\left\{\begin{split}
& \pa_t \ot_3+\wt u\cdot\wt\nabla\ot_3+\wt u_3\cdot\wt\nabla\ot_4
-(\D-\f{1}{r^2})\omega^\theta_3
-\frac{u^r \ot_3 +u^r_3\ot_4}{r}
-2\frac{u^\theta \pa_z u^\theta_3+u^\theta_3 \pa_z\ut_4}{r}=0, \\
& \pa_t \ut_3+\wt u\cdot\wt\nabla\ut_3+\wt u_3\cdot\wt\nabla\ut_4
-(\D-\f{1}{r^2})\ut_3
+\frac{u^r_3 u^\theta_3+\ur_3\ut_4+\ur_4\ut_3}{r}=0,\\
& (\ot_3,\ut_3)|_{t=0}
=(\ot(t_2)\cdot\chi_{\{r< A_2\}},\,\ut(t_2)\cdot\chi_{\{r< A_2\}}).
\end{split}\right.
\end{equation}
And in view of \eqref{5.1} and \eqref{spaceot4ut4},
we know that this solution $(\ot_3,\ut_3)$ satisfies
\begin{equation}\begin{split}\label{ot1ut1space}
\|\ot_3\|_{L^4(\R^+;L^2)}\leq
\|\ot\|_{L^4([t_2,\infty);L^2)}+\|\ot_4\|_{X_\infty}<2\ve.
\end{split}\end{equation}
Moreover, by definition, we know that
the supports of both $\ot_3(0)$ and $\ut_3(0)$ belong to $\{r\leq A_2\}$,
which together with \eqref{5.1} implies
\begin{equation}\label{ot1ut1finiteenergy}
\ot_3(0)\in L^1,\quad
r\ot_3(0)\in L^2,\quad \ut_3(0)\in L^2.
\end{equation}
We mention that these norms depend on $A_2$,
which may be large but still finite.
\smallskip

Now by multiplying ${\rm sgn}(\ot_3)$ on both sides of the equation
of $\ot_3$ in \eqref{ot3ut3}, and then integrating the resulting equality on $\R^3$,
we obtain
\begin{align*}
\f{d}{dt} \|\ot_3\|_{L^1}
-\int_{\R^3}\Delta\ot_3\cdot& {\rm sgn}(\ot_3)\,dx
+\int_{\R^3} r^{-2}|\ot_3|\,dx
+\int_{\R^3}(\wt u_3\cdot\wt\nabla\ot_4)\cdot {\rm sgn}(\ot_3)\,dx\\
&-\int_{\R^3}r^{-1}{\rm sgn}(\ot_3)\cdot\bigl(u^r \ot_3 +u^r_3\ot_4+2u^\theta \pa_z u^\theta_3+2u^\theta_3 \pa_z\ut_4\bigr)\,dx=0.
\end{align*}
In which, it is well-known that
$$-\int_{\R^3}\Delta\ot_3\cdot{\rm sgn}(\ot_3)\,dx\geq0.$$
As a result, we achieve
\begin{equation}\begin{split}\label{ot1L12}
\f{d}{dt}\|\ot_3\|_{L^1}+ \|r^{-2}\ot_3\|_{L^1}
\leq& \bigl\|r^{-\frac16}\widetilde{u}_3\bigr\|_{L^2}
\Bigl(\bigl\|r^{\frac16}\widetilde{\nabla}\ot_4\bigr\|_{L^2}
+\bigl\|r^{-\frac56}\ot_4\bigr\|_{L^2}\Bigr)
+\|\ot_3\|_{L^2}\|r^{-1}\ur\|_{L^{2}}\\
&+2\|\pa_z\ut_3\|_{L^2}\|r^{-1}\ut\|_{L^2}
+2\bigl\|r^{-\frac12}\ut_3\bigr\|_{L^2}\bigl\|r^{-\frac12}\pa_z\ut_4\bigr\|_{L^2}.
\end{split}\end{equation}
In which, by using Lemma \ref{lemuruz} and interpolation inequality, we obtain
$$\bigl\|r^{-\frac16}\widetilde{u}_3\bigr\|_{L^2}
=\bigl\|r^{\frac13}\widetilde{u}_3\bigr\|_{L^2(\Omega)}
\lesssim\bigl\|r^{\frac23}\ot_3\bigr\|_{L^{\frac65}(\Omega)}
=\bigl\|r^{-\frac16}\ot_3\bigr\|_{L^{\frac65}}
\leq \|r^{-2}\ot_3\|_{L^1}^{\frac1{12}}\|\ot_3\|_{L^1}^{\frac7{12}}
\|\ot_3\|_{L^2}^{\frac13}.$$
By using this together with Young's inequality, we can achieve
\begin{equation}\begin{split}\label{5.11}
\bigl\|r^{-\frac16}&\widetilde{u}_3\bigr\|_{L^2}
\Bigl(\bigl\|r^{\frac16}\widetilde{\nabla}\ot_4\bigr\|_{L^2}
+\bigl\|r^{-\frac56}\ot_4\bigr\|_{L^2}\Bigr)\\
&\leq C\|r^{-2}\ot_3\|_{L^1}^{\frac1{12}}
\|\ot_3\|_{L^1}^{\frac7{12}}\|\ot_3\|_{L^2}^{\frac13}
\cdot t^{-\frac16}
\Bigl(\bigl\|t^{\frac16}r^{\frac16}\widetilde{\nabla}\ot_4\bigr\|_{L^2}
+\bigl\|t^{\frac16}r^{-\frac13}\ot_4\bigr\|_{L^2(\Omega)}\Bigr)\\
&\leq \frac12\|r^{-2}\ot_3\|_{L^1}+\ve t^{-\frac12}\\
&\quad+C\ve^{-\frac47}\|\ot_3\|_{L^1}\|\ot_3\|_{L^2}^{\frac47}
\Bigl(\bigl\|t^{\frac16}r^{\frac16}\widetilde{\nabla}\ot_4\bigr\|_{L^2}^{\frac{12}7}
+\|\ot_4\|_{L^2(\Omega)}^{\frac87}
\bigl\|t^{\frac12}r^{-1}\ot_4\bigr\|_{L^2(\Omega)}^{\frac47}\Bigr).
\end{split}\end{equation}
While by using Lemma \ref{lemur}, there holds
\begin{equation}
\|\ot_3\|_{L^2}\|r^{-1}\ur\|_{L^2}\lesssim\|\ot_3\|_{L^2}\|\ot\|_{L^2}
\lesssim\|\ot_3\|_{L^2}^2+\|\ot_4\|_{L^2}^2.
\end{equation}
On the other hand, by using interpolation and Young's inequality, we obtain
\begin{equation}\begin{split}\label{5.13}
&\|\pa_z\ut_3\|_{L^2}\|r^{-1}\ut\|_{L^2}
+\bigl\|r^{-\frac12}\ut_3\bigr\|_{L^2}\bigl\|r^{-\frac12}\pa_z\ut_4\bigr\|_{L^2}\\
&\leq\|\pa_z\ut_3\|_{L^2}\Bigl(\|r^{-1}\ut_3\|_{L^2}
+\|r^{-1}\ut_4\|_{L^2(\Omega)}^{\frac12}\|\ut_4\|_{L^2(\Omega)}^{\frac12}\Bigr)
+\|r^{-1}\ut_3\|_{L^2}^{\frac12}\|\ut_3\|_{L^2}^{\frac12}
\|\pa_z\ut_4\|_{L^2(\Omega)}\\
&\leq\|\pa_z\ut_3\|_{L^2}^2+\|r^{-1}\ut_3\|_{L^2}^2\\
&\qquad\qquad
+\|r^{-1}\ut_4\|_{L^2(\Omega)}\|\ut_4\|_{L^2(\Omega)}
+\|\ut_3\|_{L^2}^{2}\|\pa_z\ut_4\|_{L^2(\Omega)}^{2}
+\|\pa_z\ut_4\|_{L^2(\Omega)}.
\end{split}\end{equation}
Substituting the estimates \eqref{5.11}-\eqref{5.13} into \eqref{ot1L12}, we achieve
\begin{equation}\begin{split}\label{estimateot1L1}
&\f{d}{dt}\|\ot_3\|_{L^1}+\frac12 \|r^{-2}\ot_3\|_{L^1}
-2\bigl(\|\pa_z\ut_3\|_{L^2}^2+\|r^{-1}\ut_3\|_{L^2}^2\bigr)\\
&\lesssim\ve t^{-\frac12}+\ve^{-\frac47}\|\ot_3\|_{L^1}\|\ot_3\|_{L^2}^{\frac47}
\bigl(\bigl\|t^{\frac16}r^{\frac16}\widetilde{\nabla}\ot_4\bigr\|_{L^2}^{\frac{12}7}
+\|\ot_4\|_{L^2(\Omega)}^{\frac87}
\bigl\|t^{\frac12}r^{-1}\ot_4\bigr\|_{L^2(\Omega)}^{\frac47}\bigr)\\
&\quad+\|\ot_3\|_{L^2}^2+\|\ot_4\|_{L^2}^2
+\|\ut_3\|_{L^2}^{2}\|\pa_z\ut_4\|_{L^2(\Omega)}^{2}+\|\pa_z\ut_4\|_{L^2(\Omega)}
+\|r^{-1}\ut_4\|_{L^2(\Omega)}\|\ut_4\|_{L^2(\Omega)}.
\end{split}\end{equation}

Next, by using the equation for $\ot_3$ in \eqref{ot3ut3}, we can derive the
equation for $r\ot_3$ that
\begin{equation}\label{eqtrot1}
\begin{split}
& \pa_t (r \ot_3)+\wt u\cdot\wt\nabla(r\ot_3)
+\wt u_3\cdot\wt\nabla(r \ot_4)-\Delta(r\ot_3)
+2\frac {\pa_r(r\ot_3)}r\\
& \qquad\qquad\qquad\qquad
-2\ur\ot_3-2\ur_3\ot_4-2u^\theta_3 (\pa_z\ut_3+\pa_z\ut_4)
-2u^\theta_4 \pa_z u^\theta_3=0.
\end{split}\end{equation}
Thanks to the homogeneous Dirichlet boundary condition for $\ot_3$
on $\{r=0\}$, there holds
$$\int_{\R^3} 2\frac{\pa_r(r\ot_3)}r\cdot r\ot_3\,dx
=2\pi\int_{\R}|r\ot_3(r,z)|^2\Big|_{r=\infty}\, dz\geq 0.$$
And by using integration by parts, we have
\begin{align*}
\int_{\R^3}\wt u_3\cdot\wt\nabla(r\ot_4)\cdot r\ot_3 \,dx
=-\int_{\R^3}\widetilde{u}_3\otimes(r\ot_4):\widetilde{\nabla}(r\ot_3)\,dx.
\end{align*}
In view of these, the $L^2$ energy estimate
of $r\ot_3$ in \eqref{eqtrot1} gives
\begin{equation}\begin{split}\label{rot1L2}
&\frac12\f{d}{dt}\|r\ot_3\|_{L^2}^2
+\|\nabla(r\ot_3)\|_{L^2}^2
\leq \int_{\R^3}\bigl|\widetilde{u}_3\otimes
(r\ot_4):\widetilde{\nabla}(r\ot_3)\bigr|\,dx\\
&+2\int_{\R^3}\bigl|\ur \ot_3+\ur_3\ot_4\bigr|\cdot r |\ot_3|\,dx
+2\Bigl|\int_{\R^3}\bigl(\ut_3\pa_z\ut_3+\pa_z(\ut_3\ut_4)\bigr) r\ot_3\, dx\Bigr|
\eqdef\cG_1+\cG_2+\cG_3.
\end{split}\end{equation}
For the terms on the right-hand side of \eqref{rot1L2},
we first get by using Lemma \ref{lemuruz} that
$$\|r^{\f56}\widetilde{u}_3\|_{L^{12}}
=\bigl\|r^{\f{11}{12}}\widetilde{u}_3\bigr\|_{L^{12}(\Omega)}
\lesssim\bigl\|r^{\f{13}{12}}\ot_3\bigr\|_{L^2(\Omega)}
\leq\|\ot_3\|_{L^2}^{\f5{12}} \|r\ot_3\|_{L^2}^{\f7{12}}.$$
As a result, there holds
\begin{equation}\begin{split}\label{5.17}
\cG_1&\leq\|\nabla(r\ot_3)\|_{L^2}\|\ot_4\|_{L^2}^{\frac12}
\bigl\|r^{\frac13}\ot_4\bigr\|_{L^3}^{\frac12}
\bigl\|r^{\frac56}\widetilde{u}_3\bigr\|_{L^{12}}\\
&\leq\|\nabla(r\ot_3)\|_{L^2}\|\ot_4\|_{L^2}^{\frac12}
\bigl\|r^{\frac23}\ot_4\bigr\|_{L^3(\Omega)}^{\frac12}
\|\ot_3\|_{L^2}^{\frac5{12}}\|r\ot_3\|_{L^2}^{\frac{7}{12}}\\
&\leq\frac14\|\nabla(r\ot_3)\|_{L^2}^2
+C\|r\ot_3\|_{L^2}^{2}\bigl(\|\ot_3\|_{L^2}^4+\|\ot_4\|_{L^2}^4
+\bigl\|r^{\frac23}\ot_4\bigr\|_{L^3(\Omega)}^3\bigr)
+C\|\ot_4\|_{L^2}^2.
\end{split}\end{equation}
Thanks to Sobolev embedding theorem as well as Lemma \ref{lemuruz},
we can get
\begin{equation}\begin{split}
\cG_2&
\leq \|r\ot_3\|_{L^6}\cdot\bigl(\|\ur_3\|_{L^3}+\|\ur_4\|_{L^3}\bigr)
\cdot\bigl(\|\ot_3\|_{L^2}+\|\ot_4\|_{L^2}\bigr)\\
&\leq\|\nabla(r\ot_3)\|_{L^2}
\cdot\bigl(\bigl\|r^{\frac12}\ot_3\bigr\|_{L^2}
+\|\ot_4\|_{L^{\frac32}}\bigr)\cdot\bigl(\|\ot_3\|_{L^2}+\|\ot_4\|_{L^2}\bigr)\\
&\leq\frac1{16}\|\nabla(r\ot_3)\|_{L^2}^2
+\bigl(\bigl\|r^{\frac12}\ot_3\bigr\|_{L^2}^2+\|\ot_4\|_{L^{\frac32}}^2\bigr)
\cdot\bigl(\|\ot_3\|_{L^2}^2+\|\ot_4\|_{L^2}^2\bigr)\\
&\leq\frac1{16}\|\nabla(r\ot_3)\|_{L^2}^2
+C\bigl(1+\|r\ot_3\|_{L^2}^2\|\ot_3\|_{L^2}^2+\|\ot_4\|_{L^{\frac32}}^2\bigr)
\cdot\bigl(\|\ot_3\|_{L^2}^2+\|\ot_4\|_{L^2}^2\bigr).
\end{split}\end{equation}
While by using integration by parts, we have
\begin{equation}\begin{split}\label{5.19}
\cG_3&\leq \|\pa_z\ut_3\|_{L^2}\|r\ut_3\|_{L^\infty}\|\ot_3\|_{L^2}
+\|\ut_3\ut_4\|_{L^2}\bigl\|\pa_z(r\ot_3)\bigr\|_{L^2}\\
&\leq\frac1{16}\|\pa_z\ut_3\|_{L^2}^2+\frac1{16}\bigl\|\pa_z(r\ot_3)\bigr\|_{L^2}^2
+C\|r\ut_3\|_{L^\infty}^2\Bigl(\|\ot_3\|_{L^2}^2+
\|\ut_4\|_{L^2(\Omega)}\|r^{-1}\ut_4\|_{L^2(\Omega)}\Bigr).
\end{split}\end{equation}
Now by substituting the estimates \eqref{5.17}-\eqref{5.19} into \eqref{rot1L2}, we achieve
\begin{equation}\begin{split}\label{estimaterot1L2}
\frac12\f{d}{dt}\|r&\ot_3\|_{L^2}^2
+\frac12\|\nabla(r\ot_3)\|_{L^2}^2-\frac18\|\pa_z\ut_3\|_{L^2}^2\\
&\lesssim\|r\ot_3\|_{L^2}^{2}\bigl(\|\ot_3\|_{L^2}^4+\|\ot_4\|_{L^2}^4
+\bigl\|r^{\frac23}\ot_4\bigr\|_{L^3(\Omega)}^3\bigr)\\
&\quad+\bigl(1+\|\ot_4\|_{L^{\frac32}}^2+\|r\ut_3\|_{L^\infty}^2\bigr)
\bigl(\|\ot_3\|_{L^2}^2+\|\ot_4\|_{L^2}^2
+\|\ut_4\|_{L^2(\Omega)}\|r^{-1}\ut_4\|_{L^2(\Omega)}\bigr).
\end{split}\end{equation}

Finally, applying $L^2$ estimat of $\ut_3$ in \eqref{ot3ut3} yields
\begin{equation}\label{5.20}
\frac12\f{d}{dt}\|\ut_3\|_{L^2}^2+\|\nabla\ut_3\|_{L^2}^2
+\|r^{-1}\ut_3\|_{L^2}^2=-\int_{\R^3}\Bigl(\widetilde{u}_3\cdot\widetilde{\nabla}\ut_4
+\frac{u^r_3 u^\theta_3+\ur_3\ut_4+\ur_4\ut_3}{r}\Bigr)\cdot \ut_3\,dx.
\end{equation}
By using divergence-free condition again, we can write
\begin{align*}
\Bigl|\int_{\R^3}\bigl(\widetilde{u}_3\cdot\widetilde{\nabla}\ut_4
\bigr)\cdot \ut_3\,dx\Bigr|
&=\Bigl|\int_\Omega r\wt u_3\otimes\ut_4:\wt\nabla\ut_3\,drdz\Bigr|\\
&\leq\bigl\|r^{\f12}{\nabla}\ut_3\bigr\|_{L^2(\Omega)}
\bigl\|r^{\f12}\wt u_3\bigr\|_{L^4(\Omega)}
\|\ut_4\|_{L^4(\Omega)}.
\end{align*}
In which, by using Lemma \ref{lemuruz} and Sobolev embedding theorem, we have
$$\bigl\|r^{\f12}\wt u_3\bigr\|_{L^4(\Omega)}
\lesssim\bigl\|r^{\f43}\ot_3\bigr\|_{L^3(\Omega)}
\leq\bigl\|r^{\f76}\ot_3\bigr\|_{L^6(\Omega)}^{\f12}
\bigl\|r^{\f32}\ot_3\bigr\|_{L^2(\Omega)}^{\f12}
\lesssim\|\nabla(r\ot_3)\|_{L^2}^{\f12}
\bigl\|r^{\f32}\ot_3\bigr\|_{L^2(\Omega)}^{\f12}.$$
As a result, we achieve
\begin{equation}\begin{split}\label{5.21}
\Bigl|\int_{\R^3}\bigl(\widetilde{u}_3\cdot\widetilde{\nabla}\ut_4
\bigr)\cdot \ut_3\,dx\Bigr|
&\leq C\|{\nabla}\ut_3\|_{L^2}
\|\nabla(r\ot_3)\|_{L^2}^{\f12}\|r\ot_3\|_{L^2}^{\f12}\|\ut_4\|_{L^4(\Omega)}\\
&\leq\frac18\|\nabla\ut_3\|_{L^2}^2+\f18\|\nabla(r\ot_3)\|_{L^2}^2
+C\|r\ot_3\|_{L^2}^2\|\ut_4\|_{L^4(\Omega)}^4.
\end{split}\end{equation}
Similarly, we can get
\begin{equation}\begin{split}
\Bigl|\int_{\R^3}\frac{\ur_3\ut_4}{r}\cdot \ut_3\,dx\Bigr|
&\leq\|r^{-1}\ut_3\|_{L^2}\bigl\|r^{\f12}\ur_3\bigr\|_{L^4(\Omega)}
\|\ut_4\|_{L^4(\Omega)}\\
&\leq\frac14\|r^{-1}\ut_3\|_{L^2}^2+\f18\|\nabla(r\ot_3)\|_{L^2}^2
+C\|r\ot_3\|_{L^2}^2\|\ut_4\|_{L^4(\Omega)}^4.
\end{split}\end{equation}
On the other side, thanks to Lemma \ref{lemur}, we can obtain
\begin{equation}\begin{split}\label{5.22}
\Bigl|\int_{\R^3}\Bigl(\frac{u^r_3 u^\theta_3+\ur_4\ut_3}{r}\Bigr)\cdot\ut_3\,dx\Bigr|
& \leq\|\ut_3\|_{L^6}^{\frac32}\|\ut_3\|_{L^2}^{\frac12}
\bigl(\bigl\|\frac{\ur_3}{r}\bigr\|_{L^2}+\bigl\|\frac{\ur_4}{r}\bigr\|_{L^2}\bigr)\\
&\leq\|\nabla\ut_3\|_{L^2}^{\frac32}\|\ut_3\|_{L^2}^{\frac12}
\bigl(\|\ot_3\|_{L^2}+\|\ot_4\|_{L^2}\bigr)\\
&\leq\frac18\|\nabla\ut_3\|_{L^2}^2+\|\ut_3\|_{L^2}^2
\bigl(\|\ot_3\|_{L^2}^4+\|\ot_4\|_{L^2}^4\bigr).
\end{split}\end{equation}
Substituting the estimates \eqref{5.21}-\eqref{5.22} into \eqref{5.20}, we achieve
\begin{equation}\begin{split}\label{estimateut1L2}
\f12\f{d}{dt}\|\ut_3\|_{L^2}^2+\f34\|\nabla\ut_3\|_{L^2}^2
&+\f34\|r^{-1}\ut_3\|_{L^2}^2-\f14\|\nabla(r\ot_3)\|_{L^2}^2\\
&\lesssim \|r\ot_3\|_{L^2}^2
\|\ut_4\|_{L^4(\Omega)}^4+\|\ut_3\|_{L^2}^2
\bigl(\|\ot_3\|_{L^2}^4+\|\ot_4\|_{L^2}^4\bigr).
\end{split}\end{equation}

Now by summing up $\eqref{estimateot1L1}+4\times\eqref{estimaterot1L2}
+4\times\eqref{estimateut1L2}$, we get
\begin{align*}
&\f{d}{dt}\bigl(\|\ot_3\|_{L^1}+\|r\ot_3\|_{L^2}^2+\|\ut_3\|_{L^2}^2\bigr)
+\|r^{-2}\ot_3\|_{L^1}+\|\nabla(r\ot_3)\|_{L^2}^2
+\|\nabla\ut_3\|_{L^2}^2+\|r^{-1}\ut_3\|_{L^2}^2\\
&\lesssim\ve^{-\frac47}\bigl(\|\ot_3\|_{L^1}+\|r\ot_3\|_{L^2}^2+\|\ut_3\|_{L^2}^2\bigr)
\Bigl(\|\ot_3\|_{L^2}^4+\|\ot_4\|_{L^2}^4
+\|\ot_4\|_{L^2(\Omega)}^2+\bigl\|t^{\frac12}r^{-1}\ot_4\bigr\|_{L^2(\Omega)}^2\\
&\quad+\|\pa_z\ut_4\|_{L^2(\Omega)}^{2}+\|\ut_4\|_{L^4(\Omega)}^4
+\bigl\|t^{\frac16}r^{\frac16}\widetilde{\nabla}\ot_4\bigr\|_{L^2}^{2}
+\bigl\|r^{\frac23}\ot_4\bigr\|_{L^3(\Omega)}^3\Bigr)
+\ve t^{-\frac12}+\|\pa_z\ut_4\|_{L^2(\Omega)}\\
&\quad+\bigl(1+\|\ot_4\|_{L^{\frac32}}^2+\|r\ut_3\|_{L^\infty}^2\bigr)
\bigl(\|\ot_3\|_{L^2}^2+\|\ot_4\|_{L^2}^2
+\|\ut_4\|_{L^2(\Omega)}\|r^{-1}\ut_4\|_{L^2(\Omega)}\bigr).
\end{align*}
In view of \eqref{spaceot4ut4},~\eqref{addiut4},~\eqref{ot1ut1space}
and the fact that $\ve\ll1$, we have
\begin{align*}
\ve^{-\frac47}\int_0^t\Bigl(\|\ot_3&\|_{L^2}^4+\|\ot_4\|_{L^2}^4
+\|\ot_4\|_{L^2(\Omega)}^2
+\bigl\|s^{\frac12}r^{-1}\ot_4\bigr\|_{L^2(\Omega)}^2\\
&+\|\pa_z\ut_4\|_{L^2(\Omega)}^{2}+\|\ut_4\|_{L^4(\Omega)}^4
+\bigl\|s^{\frac16}r^{\frac16}\widetilde{\nabla}\ot_4\bigr\|_{L^2}^{2}
+\bigl\|r^{\frac23}\ot_4\bigr\|_{L^3(\Omega)}^3\Bigr)\,ds
\leq C\ve^{\frac{10}7},
\end{align*}
and
\begin{align*}
\int_0^t\Bigl(\ve s^{-\frac12}&+\|\pa_z\ut_4\|_{L^2(\Omega)}
+\bigl\|r^{\frac23}\ot_4\bigr\|_{L^3(\Omega)}^{\frac32}
+\bigl(1+\|\ot_4\|_{L^{\frac32}}^2+\|r\ut_3\|_{L^\infty}^2\bigr)\\
&\times\bigl(\|\ot_3\|_{L^2}^2+\|\ot_4\|_{L^2}^2
+\|\ut_4\|_{L^2(\Omega)}\|r^{-1}\ut_4\|_{L^2(\Omega)}\bigr)\Bigr)\,ds
\leq C\ve\bigl(1+\|r\ut(0)\|_{L^\infty}^2\bigr)\cdot t^{\f12},
\end{align*}
where in the last step we have used Lemma \ref{lemrut} that
$$\|r\ut_3\|_{L^\infty_t(L^\infty)}
\leq\|r\ut\|_{L^\infty_t(L^\infty)}+\|r\ut_4\|_{L^\infty_t(L^\infty)}
\leq\|r\ut(0)\|_{L^\infty}+\|r\ut_4(0)\|_{L^\infty}\leq2\|r\ut(0)\|_{L^\infty}.$$
Then by using Gronwall's inequality, we finally achieve
\begin{align*}
\|\ot_3&(t)\|_{L^1}+\|r\ot_3(t)\|_{L^2}^2+\|\ut_3(t)\|_{L^2}^2
+\int_0^t\Bigl(\|r^{-2}\ot_3\|_{L^1}+\|\nabla(r\ot_3)\|_{L^2}^2
+\|\nabla\ut_3\|_{L^2}^2\\
&+\|r^{-1}\ut_3\|_{L^2}^2\Bigr)\,ds
\leq 2\Bigl(\|\ot_3(0)\|_{L^1}+\|r\ot_3(0)\|_{L^2}^2+\|\ut_3(0)\|_{L^2}^2
+ C\ve\bigl(1+\|r\ut(0)\|_{L^\infty}^2\bigr)\cdot t^{\f12}\Bigr).
\end{align*}
This implies in particular that, there must exist some large time $t_3$ such that
\begin{align*}
&\|\ot_3(t_3)\|_{L^1}+\|r\ot_3(t_3)\|_{L^2}^2+\|\ut_3(t_3)\|_{L^2}^2
\leq C_0 \ve t_3^{\f12},\\
\|r^{-2}\ot_3(t_3)\|_{L^1}&+\|\nabla(r\ot_3)(t_3)\|_{L^2}^2
+\|\nabla\ut_3(t_3)\|_{L^2}^2+\|r^{-1}\ut_3(t_3)\|_{L^2}^2
\leq C_0\ve t_3^{-\f12},
\end{align*}
where $C_0=4C\bigl(1+\|r\ut(0)\|_{L^\infty}^2\bigr)$.
Then interpolation gives
\begin{align*}
\|\ot_3(t_3)\|_{L^1(\Omega)\cap L^{\f32}}\leq&\|\ot_3(t_3)\|_{L^1}^{\f12}
\|r^{-2}\ot_3(t_3)\|_{L^1}^{\f12}+\|r\ot_3(t_3)\|_{L^2}^{\f23}
\|r^{-2}\ot_3(t_3)\|_{L^1}^{\f13}\leq( C_0\ve)^{\f23}+ C_0\ve,\\
&\|\ut_3(t_3)\|_{L^2(\Omega)}\leq\|\ut_3(t_3)\|_{L^2}^{\f12}
\|r^{-1}\ut_3(t_3)\|_{L^2}^{\f12}\leq( C_0\ve)^{\f12}.
\end{align*}
Combining the above two estimates together with \eqref{spaceot4ut4},
we achieve
$$\|\ot(t_2+t_3)\|_{L^1(\Omega)\cap L^{\f32}}+\|\ut(t_2+t_3)\|_{L^2(\Omega)}
\leq(C_0\ve)^{\f12}+(C_0\ve)^{\f23}+\ve(1+C_0).$$
Now by choosing $\ve$ to be sufficiently small, we can use
Proposition \ref{globalsmall} to obtain
\begin{equation}\label{otutt3}
\|\ot(t)\|_{L^1(\Omega)\cap L^{\f32}}+\|\ut(t)\|_{L^2(\Omega)}
\leq2C_1\bigl((C_0\ve)^{\f12}+(C_0\ve)^{\f23}+\ve(1+C_0)\bigr),
\quad\forall\ t>t_2+t_3.
\end{equation}
Since $\ve$ can be taken arbitrarily small, by definition,
\eqref{otutt3} means exactly that
$$\lim_{t\rightarrow\infty}\|\ot(t)\|_{L^1(\Omega)\cap L^{\f32}}=0,\andf
\lim_{t\rightarrow\infty}\|\ut(t)\|_{L^2(\Omega)}=0.$$

\appendix

\section{Global well-posedness of \eqref{otut} with small initial data}\label{appfix}

The purpose of this appendix is to present the proof of Proposition \ref{globalsmall}.
By using the semi-group \eqref{lemexpresion1},
we can reformulate the systems \eqref{otut} as
\begin{equation}\label{inteeqt}
\left\{
\begin{split}
& \ot(t)=S(t)\ot_0-\int_0^t S(t-s)\Bigl(\dive_*\bigl(\widetilde{u}(s)\ot(s)\bigr)-\frac{\pa_z|\ut(s)|^2}{r}\Bigr)\,ds,\quad t>0, \\
& \ut(t)=S(t)\ut_0-\int_0^t S(t-s)\Bigl(\dive_*\bigl(\widetilde{u}(s)\ut(s)\bigr)+\frac{2\ut(s)\ur(s)}{r}\Bigr)ds,\quad t>0.
\end{split}
\right.
\end{equation}
Here and all in that follows, we always denote $\dive_*(f^r e_r+f^z e_z)
\eqdef\pa_r f^r+\pa_z f^z$.

The main idea is to apply the following fixed-point argument
to the integral formula \eqref{inteeqt}.
\begin{lem}[Lemma $5.5$ in \cite{BCD}]\label{fixpointtheorem}
{\sl Let $E$ be a Banach space, $\frak{B}(\cdot,\cdot)$ a continuous bilinear map from $E\times E$ to $E,$ and $\frak{\alpha}$ a positive
real number such that
$$\frak{\alpha}<\f1{4\|\frak{B}\|}\with \|\frak{B}\|\eqdef\sup_{\|f\|,\|g\|\leq 1}\|\frak{B}(f,g)\|.$$
Then for any $a$ in the ball $B(0,\frak{\alpha})$ in $E,$  there exists a unique $x$ in $B(0,2\frak{\alpha})$ such that
$$x=a+\frak{B}(x,x).$$
}\end{lem}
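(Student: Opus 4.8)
The plan is to recast the equation $x=a+\frak{B}(x,x)$ as a fixed-point problem and to solve it by the contraction mapping principle. Concretely, I would introduce the map $\Phi(x)\eqdef a+\frak{B}(x,x)$ and show that it is a contraction of the closed ball $\overline{B}(0,2\frak{\alpha})$ into itself. Since $E$ is complete and $\overline{B}(0,2\frak{\alpha})$ is closed, this ball is itself a complete metric space for the distance induced by the norm of $E$, so the Banach fixed-point theorem will apply and produce the desired unique fixed point.

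First I would verify the stability (self-map) property. For $x\in\overline{B}(0,2\frak{\alpha})$, the bilinearity and continuity of $\frak{B}$ give
\[
\|\Phi(x)\|\leq\|a\|+\|\frak{B}\|\,\|x\|^2\leq \frak{\alpha}+4\frak{\alpha}^2\|\frak{B}\|
=\frak{\alpha}\bigl(1+4\frak{\alpha}\|\frak{B}\|\bigr).
\]
Since $\frak{\alpha}<\f1{4\|\frak{B}\|}$ forces $4\frak{\alpha}\|\frak{B}\|<1$, the right-hand side is strictly less than $2\frak{\alpha}$, whence $\Phi$ indeed maps $\overline{B}(0,2\frak{\alpha})$ into itself, in fact into the open ball.

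Next I would establish the contraction estimate. The key algebraic point is the telescoping identity, valid for any $x,y$ by bilinearity,
\[
\Phi(x)-\Phi(y)=\frak{B}(x,x)-\frak{B}(y,y)=\frak{B}(x,x-y)+\frak{B}(x-y,y),
\]
which yields, for $x,y\in\overline{B}(0,2\frak{\alpha})$,
\[
\|\Phi(x)-\Phi(y)\|\leq\|\frak{B}\|\bigl(\|x\|+\|y\|\bigr)\|x-y\|\leq 4\frak{\alpha}\|\frak{B}\|\,\|x-y\|.
\]
As $4\frak{\alpha}\|\frak{B}\|<1$, the map $\Phi$ is a contraction with ratio $4\frak{\alpha}\|\frak{B}\|$. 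The Banach fixed-point theorem then furnishes a unique $x\in\overline{B}(0,2\frak{\alpha})$ with $\Phi(x)=x$, i.e. $x=a+\frak{B}(x,x)$; and since the self-map estimate placed the image in the open ball, this $x$ lies in $B(0,2\frak{\alpha})$, while its uniqueness there is immediate, because any solution lying in $B(0,2\frak{\alpha})$ belongs a fortiori to $\overline{B}(0,2\frak{\alpha})$.

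There is no genuine obstacle here: the statement is the abstract Picard fixed-point lemma underlying all the mild-solution constructions in the paper, and the whole argument reduces to the two elementary estimates above together with completeness of a closed ball. The only points requiring a little care are the bookkeeping of the strict inequality $4\frak{\alpha}\|\frak{B}\|<1$, so that both the self-map and the contraction properties hold, and the use of bilinearity in the telescoping identity; one could equally phrase the proof through the explicit Picard iteration $x_{0}=a$, $x_{n+1}=a+\frak{B}(x_{n},x_{n})$ and check directly that $(x_n)$ is Cauchy, but invoking the contraction mapping principle is cleaner.
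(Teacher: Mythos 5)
Your proof is correct: the self-map bound, the telescoping identity $\frak{B}(x,x)-\frak{B}(y,y)=\frak{B}(x,x-y)+\frak{B}(x-y,y)$, and the contraction ratio $4\frak{\alpha}\|\frak{B}\|<1$ are all handled properly, including the point that the image lies in the open ball so the fixed point of the closed-ball contraction is the desired element of $B(0,2\frak{\alpha})$. The paper itself gives no proof, quoting the result as Lemma $5.5$ of \cite{BCD}, and your argument is exactly the standard contraction-mapping (Picard) proof given there, so there is nothing further to compare.
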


\begin{proof}[The proof of Proposition \ref{globalsmall}]
 $\bullet$ \textbf{The estimate for the linear part in \eqref{inteeqt}.}
First, thanks to \eqref{semi3} with $(\alpha,\beta)=(-1,1)$, \eqref{semi4} and
\eqref{semi5} with $\gamma=\epsilon=0$, there holds
\begin{equation}\begin{split}\label{linear1}
\|S(t)\ot_0&\|_{L^1(\Omega)}
+t\|S(t)\ot_0\|_{L^{\infty}}
+\|S(t)\ut_0\|_{L^2(\Omega)}+t^{\frac12}\|S(t)\ut_0\|_{L^{\infty}}
+t^{\frac12}\|\widetilde{\nabla}S(t)\ut_0\|_{L^2(\Omega)}\\
&\qquad+t^{\frac12}\|r^{-1}S(t)\ut_0\|_{L^2(\Omega)}
+t^{\frac34}\|r^{-1}S(t)\ut_0\|_{L^4(\Omega)}
\lesssim\|\ot_0\|_{L^1(\Omega)}+\|\ut_0\|_{L^2(\Omega)}.
\end{split}\end{equation}
Next, by using \eqref{semi3} with $(\alpha,\beta)=(\frac12,\frac13)$
and $(\frac23,\frac13)$, we obtain
\begin{equation}\begin{split}
t^{\frac14}\|S(t)&\ot_0\|_{L^2}+\|S(t)\ot_0\|_{L^{\frac32}}
+t^{\frac13}\bigl\|r^{\frac23}S(t)\ot_0\bigr\|_{L^3(\Omega)}\\
&=t^{\frac14}\|r^{\frac12}S(t)\ot_0\|_{L^2(\Omega)}+\|r^{\frac23}S(t)\ot_0\|_{L^{\frac32}(\Omega)}
+t^{\frac13}\bigl\|r^{\frac23}S(t)\ot_0\bigr\|_{L^3(\Omega)}
\lesssim \|\ot_0\|_{L^{\frac32}}.
\end{split}\end{equation}

For the space-time integral terms,
the key idea is to use Minkowski's inequality.
Precisely, by using \eqref{semi6} with
$\alpha=0,~\beta=\frac13$ and change of variables, we have
\begin{equation}\begin{split}
\bigl\| S(t)\ot_0\bigr\|_{L_T^2 L^2(\Omega)}
&=\Bigl\|\frac{1}{4\pi t}
\int_\Omega \frac{1}{r^{\frac 12}\bar{r}^{\frac 12-\frac13}}H\bigl(\frac{t}{r\bar{r}}\bigr)
\exp\Bigl(-\frac{|\zeta|^2}{4t}\Bigr)
\cdot \bar{r}^{\frac23}\ot_0(\bar{r},\bar{z})\,d\bar{r}\,d\bar{z}\Bigr\|_{L_T^2 L^2(\Omega)}\\
&\lesssim\Bigl\|\int_\Omega \bigl\|\frac{1}{4\pi t}\cdot\frac{1}{t^{\frac13}}
\exp\bigl(-\frac{|\zeta|^2}{5t}\bigr)\bigr\|_{L_T^2}
\cdot\bar{r}^{\frac23}\ot_0(\bar{r},\bar{z})\,d\bar{r}\,d\bar{z}\Bigr\|_{L^2(\Omega)}\\
&=\Bigl\|\int_\Omega \bigl\|\frac{1}{4\pi t^{\frac43}}
\exp\bigl(-\frac{1}{5t}\bigr)\|_{L_T^2}
\cdot\bigl(|\zeta|^{-2}\bigr)^{\frac56}
\cdot\bar{r}^{\frac23}\ot_0(\bar{r},\bar{z})\,d\bar{r}\,d\bar{z}\Bigr\|_{L^2(\Omega)}\\
&\lesssim\bigl\|{r}^{\frac23}\ot_0\bigr\|_{L^{\frac32}(\Omega)}
\bigl\|\bigl(\frac{1}{r^2+z^2}\bigr)^{\frac56}\bigr\|_{L^{\frac65,\infty}(\Omega)}
\lesssim\|\ot_0\|_{L^{\frac32}},
\end{split}\end{equation}
here and in all that follows, we always denote
$|\zeta|^2\eqdef (r-\bar{r})^2+(z-\bar{z})^2,$
and we have used the fact that $\bigl\|\frac{1}{4\pi t^{\frac43}}
\exp\bigl(-\frac{1}{5t}\bigr)\|_{L_T^2}$ is uniformly bounded
by some constant independent of $T$.

Exactly along the same line, using \eqref{semi6} with $\alpha=0,~\beta=1$ gives
\begin{equation}\begin{split}
\bigl\| S(t)\ut_0\bigr\|_{L_T^4 L^4(\Omega)}
&\lesssim\Bigl\|\int_\Omega \bigl\|\frac{1}{4\pi t}
\exp\bigl(-\frac{|\zeta|^2}{5t}\bigr)\|_{L_T^4}
\cdot\ut_0(\bar{r},\bar{z})\,d\bar{r}\,d\bar{z}\Bigr\|_{L^4(\Omega)}\\
&\lesssim \|\ut_0\|_{L^2(\Omega)}
\bigl\|\bigl(\frac{1}{r^2+z^2}\bigr)^{\frac34}\bigr\|_{L^{\frac43,\infty}(\Omega)}
\lesssim\|\ut_0\|_{L^2(\Omega)}.
\end{split}\end{equation}
Using \eqref{semi6} with $\alpha=\frac12,~\beta=\frac13$ gives
\begin{equation}\begin{split}
\bigl\| S(t)\ot_0\bigr\|_{L_T^4 L^2}&=\Bigl\|r^{\frac 12}\frac{1}{4\pi t}
\int_\Omega \frac{1}{r^{\frac 12}\bar{r}^{\frac 12-\frac13}}H\bigl(\frac{t}{r\bar{r}}\bigr)
\exp\Bigl(-\frac{|\zeta|^2}{4t}\Bigr)
\cdot \bar{r}^{\frac23}\ot_0(\bar{r},\bar{z})\,d\bar{r}\,d\bar{z}\Bigr\|_{L_T^4 L^2(\Omega)}\\
&\lesssim\Bigl\|\int_\Omega \bigl\|\frac{1}{4\pi t}\cdot\frac{1}{t^{\frac1{12}}}
\exp\bigl(-\frac{|\zeta|^2}{5t}\bigr)\bigr\|_{L_T^4}
\cdot\bar{r}^{\frac23}\ot_0(\bar{r},\bar{z})\,d\bar{r}\,d\bar{z}\Bigr\|_{L^2(\Omega)}\\
&\lesssim\bigl\|{r}^{\frac23}\ot_0\bigr\|_{L^{\frac32}(\Omega)}
\bigl\|\bigl(\frac{1}{r^2+z^2}\bigr)^{\frac56}\bigr\|_{L^{\frac65,\infty}(\Omega)}
\lesssim\|\ot_0\|_{L^{\frac32}}.
\end{split}\end{equation}
Using \eqref{semi6} with $\alpha=-\frac35,~\beta=1$ gives
\begin{equation}\begin{split}
\bigl\|r^{-\frac35} S(t)\ut_0\bigr\|_{L_T^{\frac52}L^{\frac52}(\Omega)}
&\lesssim\Bigl\|\int_\Omega \bigl\|\frac{1}{4\pi t}\cdot\frac{1}{t^{\frac3{10}}}
\exp\bigl(-\frac{|\zeta|^2}{5t}\bigr)\bigr\|_{L_T^{\frac52}}
\cdot\ut_0(\bar{r},\bar{z})\,d\bar{r}\,d\bar{z}\Bigr\|_{L^{\frac52}(\Omega)}\\
&\lesssim \|\ut_0\|_{L^2(\Omega)}
\bigl\|\bigl(\frac{1}{r^2+z^2}\bigr)^{\frac9{10}}\bigr\|_{L^{\frac{10}9,\infty}(\Omega)}
\lesssim\|\ut_0\|_{L^2(\Omega)}.
\end{split}\end{equation}
Using \eqref{semi6} with $\alpha=0,~\beta=\frac13$ and
 $\alpha=\f23,~\beta=\frac13$ gives
\begin{equation}\begin{split}\label{3.23}
\bigl\|t^{\frac13}S(t)\ot_0&\bigr\|_{L_T^3 L^3(\Omega)}
+\bigl\|r^{\frac23}S(t)\ot_0\bigr\|_{L_T^3 L^3(\Omega)}\\
& \lesssim\Bigl\|t^{-1}
\int_\Omega \frac{t^{\f13}+r^{\f23}}{r^{\frac 12}\bar{r}^{\frac 16}}
\bigl|H\bigl(\frac{t}{r\bar{r}}\bigr)\bigr|
\exp\Bigl(-\frac{|\zeta|^2}{4t}\Bigr)
\cdot \bar{r}^{\frac23}|\ot_0(\bar{r},\bar{z})|\,d\bar{r}\,d\bar{z}\Bigr\|_{L_T^3 L^3(\Omega)}\\
&\lesssim\Bigl\|\int_\Omega \bigl\|\frac{1}{t}
\exp\bigl(-\frac{|\zeta|^2}{5t}\bigr)\bigr\|_{L_T^3}
\cdot\bar{r}^{\frac23}|\ot_0(\bar{r},\bar{z})|\,d\bar{r}\,d\bar{z}\Bigr\|_{L^3(\Omega)}\\
&\lesssim\bigl\|{r}^{\frac23}\ot_0\bigr\|_{L^{\frac32}(\Omega)}
\bigl\|\bigl(\frac{1}{r^2+z^2}\bigr)^{\frac23}\bigr\|_{L^{\frac32,\infty}(\Omega)}
\lesssim\|\ot_0\|_{L^{\frac32}}.
\end{split}\end{equation}
And using \eqref{semi6} with $\alpha=-1,~\beta=\frac13$ gives
\begin{equation}\begin{split}
\bigl\|t^{\frac12} r^{-1}S(t)\ot_0\bigr\|_{L_T^2 L^2(\Omega)}
&\lesssim\Bigl\|\int_\Omega \bigl\|\frac{1}{4\pi t}\cdot\f{1}{t^{\f13}}
\exp\bigl(-\frac{|\zeta|^2}{5t}\bigr)\bigr\|_{L_T^2}
\cdot\bar{r}^{\frac23}\ot_0(\bar{r},\bar{z})\,d\bar{r}\,d\bar{z}\Bigr\|_{L^2(\Omega)}\\
&\lesssim\bigl\|{r}^{\frac23}\ot_0\bigr\|_{L^{\frac32}(\Omega)}
\bigl\|\bigl(\frac{1}{r^2+z^2}\bigr)^{\frac56}\bigr\|_{L^{\frac65,\infty}(\Omega)}
\lesssim\|\ot_0\|_{L^{\frac32}},
\end{split}\end{equation}

To estimate the terms having derivatives, we need to use point-wise
estimates \eqref{semi9} and \eqref{semi10}. Indeed, by using \eqref{semi9}
and \eqref{semi10} with $\gamma=\frac23,~\epsilon=-\frac23$, we deduce
\begin{equation}\begin{split}
\bigl\|t^{\frac16}r^{\frac16}\widetilde{\nabla}& S(t)\ot_0\bigr\|_{L_T^2 L^2}
=\Bigl\|t^{\frac16}r^{\frac23}\frac{1}{4\pi t}
\int_\Omega \f{\bar{r}^{\frac 12-\frac23}}{r^{\f12}}
\exp\Bigl(-\frac{(r-\bar{r})^2+(z-\bar{z})^2}{4t}\Bigr)
\cdot \bar{r}^{\frac23}\ot_0(\bar{r},\bar{z})\\
&\qquad\cdot\Bigl(-\frac{t}{r^2\bar{r}}{H}'\bigl(\frac{t}{r\bar{r}}\bigr)
-\bigl(\frac{1}{2r}+\frac{r-\bar{r}}{2t}\bigr) H\bigl(\frac{t}{r\bar{r}}\bigr),
~-\frac{z-\bar{z}}{2t}{H}\bigl(\frac{t}{r\bar{r}}\bigr)\Bigr)
\,d\bar{r}\,d\bar{z}\Bigr\|_{L_T^2 L^2(\Omega)}\\
&\lesssim\Bigl\|\int_\Omega \bigl\|t^{\frac16}\cdot\frac{1}{4\pi t}\cdot\frac{1}{t^{\frac12}}
\exp\bigl(-\frac{|\zeta|^2}{5t}\bigr)\bigr\|_{L_T^2}
\cdot\bar{r}^{\frac23}\ot_0(\bar{r},\bar{z})\,d\bar{r}\,d\bar{z}\Bigr\|_{L^2(\Omega)}\\
&\lesssim\bigl\|{r}^{\frac23}\ot_0\bigr\|_{L^{\frac32}(\Omega)}
\bigl\|\bigl(\frac{1}{r^2+z^2}\bigr)^{\frac56}\bigr\|_{L^{\frac65,\infty}(\Omega)}
\lesssim\|\ot_0\|_{L^{\frac32}}.
\end{split}\end{equation}
Similarly, by using \eqref{semi9} and \eqref{semi10} with $\gamma=\epsilon=0$,
we obtain
\begin{equation}\begin{split}\label{linear2}
\bigl\|t^{\frac16}\widetilde{\nabla}S(t)\ut_0\bigr\|_{L_T^{\frac{12}5}L^{\frac{12}5}(\Omega)}
&\lesssim\Bigl\|\int_\Omega \bigl\|t^{\frac16}\cdot\frac{1}{4\pi t}\cdot\frac{1}{t^{\frac12}}
\exp\bigl(-\frac{|\zeta|^2}{5t}\bigr)\bigr\|_{L_T^{\frac{12}5}}
\cdot\ut_0(\bar{r},\bar{z})\,d\bar{r}\,d\bar{z}\Bigr\|_{L^{\frac{12}5}(\Omega)}\\
&\lesssim\|\ut_0\|_{L^2(\Omega)}
\bigl\|\bigl(\frac{1}{r^2+z^2}\bigr)^{\frac{11}{12}}\bigr\|_{L^{\frac{12}{11},\infty}(\Omega)}
\lesssim\|\ut_0\|_{L^2(\Omega)}.
\end{split}\end{equation}

The estimates \eqref{linear1}-\eqref{linear2} guarantees the
existence of some universal constant $C_1$
such that for any $T>0$, there holds
\begin{equation}\label{estimatelinear}
\bigl\|\bigl(S(t)\ot_0,S(t)\ut_0\bigr)
\bigr\|_{X_T}\leq C_1\bigl(\|\ot_0\|_{L^{\frac32}}+\|\ot_0\|_{L^1(\Omega)}
+\|\ut_0\|_{L^2(\Omega)}\bigr).
\end{equation}

\noindent$\bullet$ \textbf{The estimate for the bilinear part in \eqref{inteeqt}.}
For any given $(\ot_1,\ut_1),~(\ot_2,\ut_2)\in X_T$, and any $t\in[0,T]$,
let us consider the bilinear map
\begin{align*}
&(\ot_1,\ut_1)\times(\ot_2,\ut_2)\mapsto
(\cF^\omega,\cF^u),\quad\text{with}\\
\mathcal{F}^\omega\bigl((\ot_1,\ut_1),(\ot_2,\ut_2)\bigr)&(t)\eqdef\int_0^t
S(t-s)\Bigl(\dive_*(\widetilde{u}_1(s)\ot_2(s))
-\frac{\pa_z(\ut_1(s)\ut_2(s))}{r}\Bigr)ds,\\
\mathcal{F}^u\bigl((\ot_1,\ut_1),(\ot_2,\ut_2)\bigr)&(t)\eqdef\int_0^tS(t-s)\Bigl(\dive_*\bigl(\widetilde{u}_1(s)\cdot\ut_2(s)\bigr)
+\frac{2\ur_1(s)\cdot\ut_2(s)}{r}\Bigr)\,ds,
\end{align*}
where $\wt{u}_i$ is the velocity field determined by
$\ot_i$ via the Biot-Savart law.
In the following, we may
abuse the notation $\|\ot\|_{X_T}=\|(\ot,0)\|_{X_T}$
and $\|\ut\|_{X_T}=\|(0,\ut)\|_{X_T}$ for convenience.

First, we deduce from \eqref{semi2} that for any $t\in]0,T]$, there holds
\begin{align*}
\|\mathcal{F}^\omega(t)\|_{L^1(\Omega)}
&\lesssim\int_0^t\frac{\|\widetilde{u}_1(s)\cdot\ot_2(s)\|_{L^1(\Omega)}}
{(t-s)^{\frac 12}}
+\frac{\|r^{-1}\ut_1(s)\ut_2(s)\|_{L^1(\Omega)} }{(t-s)^{\frac 12}}\,ds\\
&\lesssim \int_0^t\frac{s^{\f14}\|\widetilde{u}_1(s)\|_{L^4(\Omega)}
s^{\f14}\|\ot_2(s)\|_{L^{\frac 43}(\Omega)}}{(t-s)^{\frac 12}\cdot s^{\f12}}
+\frac{\|\ut_1(s)\|_{L^2(\Omega)}s^{\f12}\|r^{-1}\ut_2(s)\|_{L^2(\Omega)}}
{(t-s)^{\frac 12}\cdot s^{\f12}}\,ds\\
&\lesssim \|\ot_1\|_{X_T}\|\ot_2\|_{X_T}+\|\ut_1\|_{X_T}\|\ut_2\|_{X_T},
\end{align*}
where in the last step we have used Lemma \ref{lemuruz} to get
\begin{equation}\label{estiwtu}
s^{\f12-\f1p}\|\widetilde{u}_1(s)\|_{L^p(\Omega)}
\lesssim s^{\f12-\f1p}\|\ot_1(s)\|_{L^{\f{2p}{p+2}}(\Omega)}\lesssim\|\ot_1\|_{X_T},
\quad\forall\ p\in[2,\infty[,
\end{equation}
which will be used frequently in the following. By using
\eqref{semi2} again and \eqref{estiwtu}, we get
\begin{align*}
t\|\mathcal{F}^\omega&(t)\|_{L^\infty}
\lesssim t\int_0^{\frac t2}\frac{\|\widetilde{u}_1(s)\cdot\ot_2(s)\|_{L^1(\Omega)} }
{(t-s)^{\frac 32}}
+\frac{\|r^{-1}\ut_1(s)\ut_2(s)\|_{L^1(\Omega)} }{(t-s)^{\frac 32}}\,ds\\
&\quad+t\int_{\frac t2}^t\frac{\|\widetilde{u}_1(s)\cdot\ot_2(s)\|_{L^4(\Omega)} }
{(t-s)^{\frac 34}}
+\frac{\|r^{-1}\ut_1(s)\ut_2(s)\|_{L^4(\Omega)} }{(t-s)^{\frac 34}}\,ds\\
&\lesssim t\int_0^{\frac t2}\frac{s^{\f14}\|\widetilde{u}_1(s)\|_{L^4(\Omega)}
s^{\f14}\|\ot_2(s)\|_{L^{\frac 43}(\Omega)}}{(t-s)^{\frac 32}\cdot s^{\f12}}
+\frac{\|\ut_1(s)\|_{L^2(\Omega)}s^{\f12}\|r^{-1}\ut_2(s)\|_{L^2(\Omega)}}
{(t-s)^{\frac 32}\cdot s^{\f12}}\,ds\\
&\quad+t\int_{\frac t2}^t\frac{s^{\f14}\|\widetilde{u}_1(s)\|_{L^4(\Omega)}
s\|\ot_2(s)\|_{L^{\infty}(\Omega)}}{(t-s)^{\frac 34}\cdot s^{\f54}}
+\frac{s^{\f12}\|\ut_1(s)\|_{L^\infty(\Omega)}
s^{\f34}\|r^{-1}\ut_2(s)\|_{L^4(\Omega)}}{(t-s)^{\frac 34}\cdot s^{\f54}}\,ds\\
&\lesssim \|\ot_1\|_{X_T}\|\ot_2\|_{X_T}+\|\ut_1\|_{X_T}\|\ut_2\|_{X_T}.
\end{align*}
Similarly, by using \eqref{semi1} with $\alpha=\frac23,~\beta=-\frac23$
 and \eqref{estiwtu}, we obtain
\begin{align*}
\|\mathcal{F}^\omega&(t)\|_{L^{\frac32}}
=\bigl\|r^{\frac23}\mathcal{F}^\omega(t)\bigr\|_{L^{\frac32}(\Omega)}
\lesssim
\int_0^t\frac{\bigl\|r^{\frac23}\widetilde{u}_1(s)\cdot\ot_2(s)\bigr\|
_{L^{\frac{12}{11}}(\Omega)} }{(t-s)^{\frac 34}}
+\frac{\|r^{-\frac13}\ut_1(s)\ut_2(s)\|_{L^{\frac43}(\Omega)}}{(t-s)^{\frac 7{12}}}\,ds\\
&\lesssim \int_0^t\frac{s^{\f14}\|\widetilde{u}_1(s)\|_{L^4(\Omega)}
\|\ot_2(s)\|_{L^{\frac32}}}{(t-s)^{\frac 34}\cdot s^{\f14}}
+\frac{s^{\f14}\|\ut_1(s)\|_{L^4(\Omega)}
\bigl(s^{\f12}\|r^{-1}\ut_2(s)\|_{L^2(\Omega)}\bigr)^{\frac13}
\|\ut_2(s)\|_{L^2(\Omega)}^{\frac23}}{(t-s)^{\frac 7{12}}\cdot s^{\f5{12}}}\,ds\\
&\lesssim\|\ot_1\|_{X_T}\|\ot_2\|_{X_T}+\|\ut_1\|_{X_T}\|\ut_2\|_{X_T}.
\end{align*}
We deduce from the above three estimates that
\begin{equation}\begin{split}\label{bilinear1}
\sup_{0<t\leq T}\Bigl(\|\mathcal{F}^\omega(t)\|_{L^1(\Omega)\bigcap L^{\frac32}}
+ t\|\mathcal{F}^\omega(t)\|_{L^\infty}\Bigr)
\lesssim\|\ot_1\|_{X_T}\|\ot_2\|_{X_T}+\|\ut_1\|_{X_T}\|\ut_2\|_{X_T}.
\end{split}\end{equation}

The $\mathcal{F}^u$ can be estimated similarly. Indeed,
by using \eqref{semi2},~\eqref{semi4} and \eqref{estiwtu}, we have
$$\|\mathcal{F}^u(t)\|_{L^2(\Omega)}
\lesssim\int_0^t\frac{s^{\f14}\|\widetilde{u}_1(s)\|_{L^4(\Omega)}
s^{\f14}\|\ut_2(s)\|_{L^4(\Omega)} }{(t-s)^{\frac 12}\cdot s^{\f12}}\,ds
\lesssim\|\ot_1\|_{X_T}\|\ut_2\|_{X_T},$$
$$t^{\frac12}\|\mathcal{F}^u(t)\|_{L^{\infty}(\Omega)}\lesssim
t^{\frac12}\int_0^t\frac{s^{\f14}\|\widetilde{u}_1(s)\|_{L^4(\Omega)}
s^{\f5{12}}\|\ut_2(s)\|_{L^{12}(\Omega)} }{(t-s)^{\frac56}\cdot s^{\f23}}\,ds
\lesssim\|\ot_1\|_{X_T}\|\ut_2\|_{X_T}.$$
While thanks to \eqref{semi1} and \eqref{semi3}
with $\alpha=-1,~\beta=1$, there holds
$$t^{\frac12}\|r^{-1}\mathcal{F}^u(t)\|_{L^2(\Omega)}
\lesssim t^{\frac12}\int_0^t
\frac{s^{\f14}\|\widetilde{u}_1(s)\|_{L^4(\Omega)}
s^{\f12}\|r^{-1}\ut_2(s)\|_{L^2(\Omega)} }{(t-s)^{\frac34}\cdot s^{\f34}}\,ds
\lesssim\|\ot_1\|_{X_T}\|\ut_2\|_{X_T},$$
$$t^{\frac34}\|r^{-1}\mathcal{F}^u(t)\|_{L^4(\Omega)}
\lesssim t^{\frac34}\int_0^t
\frac{s^{\f16}\|\widetilde{u}_1(s)\|_{L^3(\Omega)}
s^{\f34}\|r^{-1}\ut_2(s)\|_{L^4(\Omega)} }{(t-s)^{\frac56}\cdot s^{\f{11}{12}}}\,ds
\lesssim\|\ot_1\|_{X_T}\|\ut_2\|_{X_T}.$$
The estimate for $t^{\frac12}\|\widetilde{\nabla}\mathcal{F}^u(t)\|_{L^2(\Omega)}$
follows from \eqref{semi5} with $\gamma=\epsilon=0$ and \eqref{estiwtu} that
\begin{align*}
t^{\frac12}\|\widetilde{\nabla}\mathcal{F}^u(t)\|_{L^2(\Omega)}
\lesssim
t^{\frac12}\int_0^t \frac{s^{\frac14}\|\widetilde{u}_1(s)\|_{L^4(\Omega)}
s^{\frac12}\bigl\|\bigl(\widetilde{\nabla} \ut_2,
r^{-1}\ut_2\bigr)(s)\bigr\|_{L^2(\Omega)}}
{(t-s)^{\frac34}\cdot s^{\frac34}}\,ds
\lesssim\|\ot_1\|_{X_T}\|\ut_2\|_{X_T}.
\end{align*}
Then we can deduce from the above estimates that for any $t\leq T$, there holds
\begin{equation}\begin{split}\label{bilinear2}
\|\mathcal{F}^u(t)\|_{L^2(\Omega)}
+t^{\frac12}&\|\mathcal{F}^u(t)\|_{L^\infty(\Omega)}
+t^{\frac12}\|r^{-1}\mathcal{F}^u(t)\|_{L^2(\Omega)}\\
&+t^{\frac12}\|\widetilde{\nabla}\mathcal{F}^u(t)\|_{L^2(\Omega)}
+t^{\frac34}\|r^{-1}\mathcal{F}^u(t)\|_{L^4(\Omega)}
\lesssim\|\ot_1\|_{X_T}\|\ut_2\|_{X_T}.
\end{split}\end{equation}

Now, let us turn to handle the space-time integral terms,
by using Hardy-Littlewood-Sobolev inequality.
First, by using \eqref{semi2}, \eqref{semi3} with
$\alpha=-\frac35,~\beta=\f35$ and \eqref{estiwtu}, we obtain
\begin{equation}\begin{split}
&\|\mathcal{F}^u\|_{L_T^4 L^4(\Omega)}
+\bigl\|r^{-\frac35}\mathcal{F}^u\bigr\|_{L_T^{\frac52}L^{\frac52}(\Omega)}\\
&\lesssim
\Bigl\|\int_0^t\frac{\|\widetilde{u}_1(s)
\ut_2(s)\|_{L^{2}(\Omega)} }{(t-s)^{\frac 34}}\,ds\Bigr\|_{L_T^4}
+\Bigl\|\int_0^t\frac{\bigl\|r^{-\frac35}\widetilde{u}_1(s)\ut_2(s)\bigr\|_{L^{\frac{20}{13}}(\Omega)}}
{(t-s)^{\frac34}}\,ds\Bigr\|_{L_T^{\frac52}}\\
&\lesssim
\|\wt u_1\|_{L_T^4L^4(\Omega)}\|\ut_2\|_{L^4_T L^{4}(\Omega)}\Bigl\|\frac{1}{|\cdot|^{\frac 34}}\Bigr\|_{L_T^{\frac43,\infty}}
+\|\wt u_1\|_{L_T^4L^4(\Omega)}
\|r^{-\frac35}\ut_2\|_{L^{\frac52}_T L^{\frac52}(\Omega)}\Bigl\|\frac{1}{|\cdot|^{\frac 34}}\Bigr\|_{L_T^{\frac43,\infty}}\\
&\lesssim\|\ot_1\|_{X_T}\|\ut_2\|_{X_T}.
\end{split}\end{equation}
While by using \eqref{semi1} with $\alpha=\beta=0$
and $\alpha=0,~\beta=-\frac25$, we get
\begin{align*}
\|\mathcal{F}^\omega\|&_{L_T^2 L^2(\Omega)}
\lesssim\Bigl\|\int_0^t\frac{\|\widetilde{u}_1(s)\|_{L^4(\Omega)}
\|\ot_2(s)\|_{L^{2}(\Omega)} }{(t-s)^{\frac 34}}\,ds
+\int_0^t \frac{\|\ut_1(s)\|_{L^4(\Omega)}
\|r^{-\frac35} \ut_2(s)\|_{L^{\frac52}(\Omega)}}
{(t-s)^{\frac{17}{20}}}\,ds\Bigr\|_{L_T^2}\\
&\lesssim\|\wt u_1\|_{L_T^4L^4(\Omega)}\|\ot_2\|_{L^2_T L^{2}(\Omega)}
\Bigl\|\frac{1}{|\cdot|^{\frac 34}}\Bigr\|_{L_T^{\frac43,\infty}}
+\|\ut_1\|_{L^4_T L^4(\Omega)}\|r^{-\frac35} \ut_2\|_{L^{\frac52}_T L^{\frac52}(\Omega)}
\Bigl\|\frac{1}{|\cdot|^{\frac{17}{20}}}\Bigr\|_{L_T^{\frac{20}{17},\infty}}\\
&\lesssim\|\ot_1\|_{X_T}\|\ot_2\|_{X_T}+\|\ut_1\|_{X_T}\|\ut_2\|_{X_T}.
\end{align*}
Thanks to \eqref{semi1} with $\alpha=\frac12,~\beta=-\frac12$
and $\alpha=\frac12,~\beta=-1$, there holds
\begin{align*}
\|\mathcal{F}^\omega(t)\|_{L_T^4 L^2}
& =\bigl\|r^{\frac12}\mathcal{F}^\omega(t)\bigr\|_{L_T^4 L^2(\Omega)}
\lesssim\Bigl\|
\int_0^t\frac{\|r^{\frac12}\widetilde{u}_1(s)\cdot\ot_2(s)\|
_{L^{\frac43}(\Omega)} }{(t-s)^{\frac 34}}
+\frac{\|\ut_1(s)\ut_2(s)\|_{L^2(\Omega)}}{(t-s)^{\frac 34}}\,ds\Bigr\|_{L_T^4}\\
&\lesssim\|\wt u_1\|_{L_T^4L^4(\Omega)}\|\ot_2\|_{L^4_T L^{2}}
\Bigl\|\frac{1}{|\cdot|^{\frac 34}}\Bigr\|_{L_T^{\frac43,\infty}}
+\|\ut_1\|_{L^4_T L^4(\Omega)}\|\ut_2\|_{L^4_T L^4(\Omega)}
\Bigl\|\frac{1}{|\cdot|^{\frac 34}}\Bigr\|_{L_T^{\frac43,\infty}}\\
&\lesssim\|\ot_1\|_{X_T}\|\ot_2\|_{X_T}+\|\ut_1\|_{X_T}\|\ut_2\|_{X_T}.
\end{align*}
And similarly
\begin{align*}
\bigl\|r^{\f23}&\mathcal{F}^\omega\|_{L_T^3 L^3(\Omega)}
\lesssim\Bigl\|\int_0^t\frac{\|\widetilde{u}_1(s)\|_{L^{\f{12}5}(\Omega)}
\bigl\|r^{\f23}\ot_2(s)\bigr\|_{L^3(\Omega)}}{(t-s)^{\f{11}{12}}}
+\f{\|\ut_1(s)\|_{L^4(\Omega)}\|\ut_2(s)\|_{L^4(\Omega)}}
{(t-s)^{\f56}}\,ds\Bigr\|_{L_T^3}\\
&\lesssim\|\ot_1\|_{L_T^{12}L^{\f{12}{11}}(\Omega)}
\bigl\|r^{\f23}\ot_2\bigr\|_{L^3_T L^3(\Omega)}
\Bigl\|\frac{1}{|\cdot|^{\f{11}{12}}}\Bigr\|_{L_T^{\f{12}{11},\infty}}
+\|\ut_1\|_{L^4_T L^4(\Omega)}\|\ut_2\|_{L^4_T L^4(\Omega)}
\Bigl\|\frac{1}{|\cdot|^{\f56}}\Bigr\|_{L_T^{\f65,\infty}}\\
&\lesssim\|\ot_1\|_{X_T}\|\ot_2\|_{X_T}+\|\ut_1\|_{X_T}\|\ut_2\|_{X_T}.
\end{align*}
Combining the above three estimates, we deduce
\begin{equation}\begin{split}\label{bilinear3}
\|\mathcal{F}^\omega\|_{L_T^2 L^2(\Omega)\bigcap L_T^4 L^2}
+\bigl\|r^{\f23}\mathcal{F}^\omega\|_{L_T^3 L^3(\Omega)}
\lesssim\|\ot_1\|_{X_T}\|\ot_2\|_{X_T}+\|\ut_1\|_{X_T}\|\ut_2\|_{X_T}.
\end{split}\end{equation}

At last, let us handle the terms having time as weight.
For $\bigl\|t^{\frac13}\mathcal{F}^\omega\bigr\|_{L^3_T L^3(\Omega)}$,
we write
\begin{equation}\begin{split}\label{omegaL33Omega}
&\qquad\qquad\qquad\bigl\|t^{\frac13}\mathcal{F}^\omega
\bigr\|_{L^3_T L^3(\Omega)}\leq
\Rmnum{1}_1+\Rmnum{1}_2,\quad\text{where}\\
\Rmnum{1}_1&=\Bigl\|t^{\frac13}\int_0^{\frac t2}
\bigl\|S(t-s)\bigl(\dive_*(\widetilde{u}_1(s)\ot_2(s))
-\frac{\pa_z(\ut_1(s)\ut_2(s))}{r}\bigr) \bigr\|_{L^3(\Omega)}\, ds\Bigr\|_{L_T^3},\\
\Rmnum{1}_2&=\Bigl\|t^{\frac13}\int_{\frac t2}^t
\bigl\|S(t-s)\bigl(\dive_*(\widetilde{u}_1(s)\ot_2(s))
-\frac{\pa_z(\ut_1(s)\ut_2(s))}{r}\bigr) \bigr\|_{L^3(\Omega)}\, ds\Bigr\|_{L_T^3}.
\end{split}\end{equation}
And one should keep in mind that,
for $s\in\bigl[0,\frac t2\bigr]$ there holds $t\thicksim t-s$,
while for $s\in\bigl[\frac t2,t\bigr]$ there holds $t\thicksim s$.
Then the first term in \eqref{omegaL33Omega} can be handled by using \eqref{semi1} that
\begin{align*}
\Rmnum{1}_1&\lesssim\Bigl\|\int_0^{\frac t2}(t-s)^{\frac13}
\Bigl(\frac{\|\widetilde{u}_1(s) \ot_2(s)\|_{L^{\frac43}(\Omega)}}{(t-s)^{\frac{11}{12}}}
+\frac{\|\ut_1(s)\ut_2(s)\|_{L^2(\Omega)}}{(t-s)^{\frac76}}\Bigr)\, ds\Bigr\|_{L_T^3}\\
&\lesssim\Bigl\|\int_0^{\frac t2}
\frac{\|\widetilde{u}_1(s)\|_{L^4(\Omega)} \|\ot_2(s)\|_{L^2(\Omega)}}{(t-s)^{\frac7{12}}}
+\frac{\|\ut_1(s)\|_{L^4(\Omega)}\|\ut_2(s)\|_{L^4(\Omega)}}{(t-s)^{\frac56}}\, ds\Bigr\|_{L_T^3}\\
&\lesssim \|\ot_1\|_{L^4_T L^{\frac43}(\Omega)} \|\ot_2\|_{L_T^2 L^2(\Omega)}
\Bigl\|\frac{1}{|\cdot|^{\frac7{12}}}\Bigr\|_{L_T^{\frac{12}7,\infty}}
+\|\ut_1\|_{L^4_T L^4(\Omega)} \|\ut_2\|_{L_T^4 L^4(\Omega)}
\Bigl\|\frac{1}{|\cdot|^{\frac 56}}\Bigr\|_{L_T^{\frac65,\infty}}\\
&\lesssim\|\ot_1\|_{X_T}\|\ot_2\|_{X_T}+\|\ut_1\|_{X_T}\|\ut_2\|_{X_T}.
\end{align*}
As for the second term in \eqref{omegaL33Omega}, in view of \eqref{semi2}, there holds
\begin{align*}
&\Rmnum{1}_2\lesssim\Bigl\|\int_{\frac t2}^t s^{\frac13}
\Bigl(\frac{\|\widetilde{u}_1(s) \ot_2(s)\|_{L^{\frac{12}7}(\Omega)}}{(t-s)^{\frac34}}
+\frac{\|r^{-\frac35}\ut_1(s)\ut_2(s)\|_{L^{\frac{30}{17}}(\Omega)}}{(t-s)^{\frac{14}{15}}}\Bigr)\, ds\Bigr\|_{L_T^3}\\
&\lesssim\Bigl\|\int_{\frac t2}^t
\frac{\|\widetilde{u}_1(s)\|_{L^4(\Omega)}\|s^{\frac13}\ot_2(s)\|_{L^3(\Omega)}}{(t-s)^{\frac34}}
+\frac{s^{\frac13}\|\ut_1(s)\|_{L^6(\Omega)}\|r^{-\frac35}\ut_2(s)\|_{L^{\frac52}(\Omega)}}{(t-s)^{\frac{14}{15}}}\, ds\Bigr\|_{L_T^3}\\
&\lesssim\|\ot_1\|_{L^4_T L^{\frac43}(\Omega)}
\bigl\|t^{\frac13}\ot_2\bigr\|_{L_T^3 L^3(\Omega)}
\Bigl\|\frac{1}{|\cdot|^{\frac 34}}\Bigr\|_{L_T^{\frac43,\infty}}
+\bigl\|t^{\frac13}\|\ut_1\|_{L^6(\Omega)}\bigr\|_{L^\infty_T}
\bigl\|r^{-\frac35}\ut_2\bigr\|_{L_T^{\frac52} L^{\frac52}(\Omega)}
\Bigl\|\frac{1}{|\cdot|^{\frac{14}{15}}}\Bigr\|_{L_T^{\frac{15}{14},\infty}}\\
&\lesssim\|\ot_1\|_{X_T}\|\ot_2\|_{X_T}+\|\ut_1\|_{X_T}\|\ut_2\|_{X_T}.
\end{align*}
Substituting the above two estimates into \eqref{omegaL33Omega} gives
\begin{equation}\label{bilinear4}
\bigl\|t^{\frac13}\mathcal{F}^\omega
\bigr\|_{L^3_T L^3(\Omega)}\lesssim
\|\ot_1\|_{X_T}\|\ot_2\|_{X_T}+\|\ut_1\|_{X_T}\|\ut_2\|_{X_T}.
\end{equation}

Similarly, we decompose the term
$\bigl\|t^{\frac12}r^{-1}\mathcal{F}^\omega\bigr\|_{L^2_T L^2(\Omega)}$
into two parts:
\begin{equation}\begin{split}\label{r-1omegaL22}
&\qquad\qquad\qquad\bigl\|t^{\frac12}r^{-1}\mathcal{F}^\omega
\bigr\|_{L^2_T L^2(\Omega)}\leq
\Rmnum{2}_1+\Rmnum{2}_2,\quad\text{where}\\
\Rmnum{2}_1&=\Bigl\|t^{\frac12}\int_0^{\frac t2}
\bigl\|r^{-1}S(t-s)\bigl(\dive_*(\widetilde{u}_1(s)\ot_2(s))
-\frac{\pa_z(\ut_1(s)\ut_2(s))}{r}\bigr) \bigr\|_{L^2(\Omega)}\, ds\Bigr\|_{L_T^2}\\
\Rmnum{2}_2&=\Bigl\| t^{\frac12}\int_{\frac t2}^t
\bigl\|r^{-1}S(t-s)\bigl(\dive_*(\widetilde{u}_1(s)\ot_2(s))
-\frac{\pa_z(\ut_1(s)\ut_2(s))}{r}\bigr) \bigr\|_{L^2(\Omega)}\, ds\Bigr\|_{L_T^2}.
\end{split}\end{equation}
By using \eqref{semi1} with $\alpha=-1,~\beta=0$, we obtain
\begin{align*}
&\Rmnum{2}_1\lesssim\Bigl\|\int_0^{\frac t2}(t-s)^{\f12}
\frac{\|\widetilde{u}_1(s)\ot_2(s)\|_{L^{\frac43}(\Omega)}
+\|r^{-1}\ut_1(s)\ut_2(s)\|_{L^{\frac43}(\Omega)}}{(t-s)^{\frac54}}
\, ds\Bigr\|_{L_T^2}\\
&\lesssim\Bigl\|\int_0^{\frac t2}
\frac{\|\widetilde{u}_1(s)\|_{L^4(\Omega)}
\| \ot_2(s)\|_{L^2(\Omega)}+\bigl\|r^{-\frac35}\ut_1(s)\bigr\|_{L^{\frac52}(\Omega)}
\bigl\|r^{-\frac35}\ut_2(s)\bigr\|_{L^{\frac52}(\Omega)}^{\frac23}
\|\ut_2(s)\|^{\frac13}_{L^4(\Omega)}}{(t-s)^{\frac34}}
\, ds\Bigr\|_{L_T^2}\\
&\lesssim\Bigl(\|\ot_1\|_{L_T^4L^{\frac43}(\Omega)}
\|\ot_2\|_{L^2_T L^2(\Omega)}+\bigl\|r^{-\frac35}\ut_1\bigr\|_{L_T^{\frac52}L^{\frac52}(\Omega)}
\bigl\|r^{-\frac35}\ut_2\bigr\|_{L_T^{\frac52}L^{\frac52}(\Omega)}^{\frac23}
\|\ut_2\|^{\frac13}_{L_T^4L^4(\Omega)}\Bigr)
\Bigl\|\frac{1}{|\cdot|^{\frac34}}\Bigr\|_{L_T^{\frac43,\infty}}\\
&\lesssim\|\ot_1\|_{X_T}\|\ot_2\|_{X_T}+\|\ut_1\|_{X_T}\|\ut_2\|_{X_T}.
\end{align*}
While by using \eqref{semi1} with $\alpha=-1,~\beta=1$ and
$\alpha=-1,~\beta=\f7{15}$,
we get
\begin{align*}
\Rmnum{2}_2
&\lesssim\Bigl\|\int_{\frac t2}^t s^{\frac12}
\frac{\|\widetilde{u}_1(s)\|_{L^4(\Omega)}
\|r^{-1} \ot_2(s)\|_{L^2(\Omega)}}{(t-s)^{\frac34}}
+s^{\frac12}\frac{\bigl\|r^{-\frac35}\ut_1(s)\bigr\|_{L^{\frac52}(\Omega)}
\bigl\|r^{-\f{13}{15}}\ut_2(s)\bigr\|_{L^{\f{10}3}(\Omega)}}
{(t-s)^{\frac{29}{30}}}\, ds\Bigr\|_{L_T^2}\\
&\lesssim\|\ot_1\|_{L_T^4L^{\frac43}(\Omega)}
\bigl\|t^{\frac12}r^{-1}\ot_2\|_{L^2_T L^2(\Omega)}\Bigl\|\frac{1}{|\cdot|^{\frac34}}\Bigr\|_{L_T^{\frac43,\infty}}\\
&\qquad\qquad+\bigl\|r^{-\frac35}\ut_1\bigr\|_{L_T^{\frac52}L^{\frac52}(\Omega)}
\bigl\|t^{\f34}\|r^{-1}\ut_2\|_{L^4(\Omega)}\bigr\|_{L_T^\infty}^{\f23}
\bigl\|r^{-\frac35}\ut_2\bigr\|_{L_T^{\frac52}L^{\frac52}(\Omega)}^{\f13}
\Bigl\|\frac{1}{|\cdot|^{\frac{29}{30}}}\Bigr\|_{L_T^{\frac{30}{29},\infty}}\\
&\lesssim\|\ot_1\|_{X_T}\|\ot_2\|_{X_T}+\|\ut_1\|_{X_T}\|\ut_2\|_{X_T}.
\end{align*}
Substituting the above two estimates into \eqref{r-1omegaL22} gives
\begin{equation}\label{bilinear5.5}
\bigl\|t^{\frac12}r^{-1}\mathcal{F}^\omega
\bigr\|_{L^2_T L^2(\Omega)}\lesssim
\|\ot_1\|_{X_T}\|\ot_2\|_{X_T}+\|\ut_1\|_{X_T}\|\ut_2\|_{X_T}.
\end{equation}

The estimates for the remaining two terms require more tricks.
We first write
\begin{equation}\begin{split}\label{nablaomegaL42}
&\qquad\qquad\qquad
\bigl\|t^{\frac16}r^{\frac16}\widetilde{\nabla}\mathcal{F}^\omega\bigr\|_{L^2_T L^2}
\leq\Rmnum{3}_1+\Rmnum{3}_2,\quad\text{where}\\
\Rmnum{3}_1&=\Bigl\|t^{\frac16}\int_0^{\frac t2}
\bigl\|r^{\frac23}\widetilde{\nabla}S(t-s)\bigl(\dive_*(\widetilde{u}_1(s)\ot_2(s))
-\frac{\pa_z(\ut_1(s)\ut_2(s))}{r}\bigr) \bigr\|_{L^2(\Omega)}\, ds\Bigr\|_{L_T^2}\\
\Rmnum{3}_2&=\Bigl\|t^{\frac16}\int_{\frac t2}^t
\bigl\|r^{\frac23}\widetilde{\nabla}S(t-s)\bigl(\dive_*(\widetilde{u}_1(s)\ot_2(s))
-\frac{\pa_z(\ut_1(s)\ut_2(s))}{r}\bigr) \bigr\|_{L^2(\Omega)}\, ds\Bigr\|_{L_T^2}.
\end{split}\end{equation}
Thanks to \eqref{semi5} with $\gamma=-\epsilon=\frac 23$
and Biot-Savart law, there holds
\begin{align*}
\Rmnum{3}_1
&\lesssim\Bigl\|\int_0^{\frac t2}
\Bigl(\frac{\bigl\|r^{\frac12}\widetilde{\nabla}\widetilde{u}_1(s)\bigr\|_{L^2(\Omega)}
\bigl\|r^{\frac16} \ot_2(s)\bigr\|_{L^2(\Omega)}}{(t-s)^{1-\frac16}}
+\frac{\bigl\|r^{\frac23}\widetilde{\nabla}\ot_2 (s)\bigr\|_{L^2(\Omega)}
\|\widetilde{u}_1(s)\|_{L^4(\Omega)}}{(t-s)^{\frac34-\frac16}}\\
&\qquad+\sum\limits_{1\leq i,j\leq 2,i\neq j}
\frac{\|\pa_z\ut_i(s)\|_{L^{\frac{12}{5}}(\Omega)}
\bigl\|r^{-\frac13} \ut_j(s)\bigr\|_{L^{3}(\Omega)}}
{(t-s)^{\frac34-\frac16}}\Bigr)\, ds\Bigr\|_{L_T^2}\\
&\lesssim \Bigl\|\int_0^{\frac t2}
\Bigl(\frac{\|\ot_1(s)\bigr\|_{L^2}\|\ot_2(s)\|_{L^2}^{\frac13}
\|\ot_2(s)\|_{L^2(\Omega)}^{\frac23}}{(t-s)^{\frac56}}
+\frac{\bigl\|s^{\frac16}r^{\frac16}\widetilde{\nabla}\ot_2(s)\bigr\|_{L^2}
\|\ot_1(s)\|_{L^{\frac43}(\Omega)}}{(t-s)^{\frac7{12}}\cdot s^{\frac16}}\\
&\qquad +\sum\limits_{1\leq i,j\leq 2,i\neq j}
\frac{\bigl\|s^{\frac16}\pa_z\ut_i(s)\bigr\|_{L^{\frac{12}{5}}(\Omega)}
\bigl\|r^{-\frac13}\ut_j(s)\bigr\|_{L^3(\Omega)}}
{(t-s)^{\frac7{12}}\cdot s^{\frac16}}\Bigr)\, ds\Bigr\|_{L_T^2}
\eqdef\Rmnum{3}_{1,1}+\Rmnum{3}_{1,2}+\Rmnum{3}_{1,3},
\end{align*}
where
$$\Rmnum{3}_{1,1}\lesssim \|\ot_1\|_{L_T^4 L^2}\|\ot_2\|_{L^4_T L^2}^{\frac13}
\|\ot_2\|_{L^2_T L^2(\Omega)}^{\frac23}
\Bigl\|\frac{1}{|\cdot|^{\frac56}}\Bigr\|_{L_T^{\frac65,\infty}}
\lesssim\|\ot_1\|_{X_T}\|\ot_2\|_{X_T},$$
and by using $t\thicksim t-s$ for $s\in[0,t/2]$ that
\begin{align*}
&\Rmnum{3}_{1,2}+\Rmnum{3}_{1,3}
\lesssim\Bigl\|\bigl(\int_0^{\frac t2}\bigl(s^{-\frac16}\bigr)^5\,ds\bigr)^{\frac15}
\Bigl(\int_0^{\frac t2}\frac{\bigl\|s^{\frac16}r^{\frac16}\widetilde{\nabla}\ot_2 (s)\bigr\|_{L^2}^{\frac54}
\|\ot_1(s)\|_{L^{\frac43}(\Omega)}^{\frac54}}{(t-s)^{\frac{35}{48}}}\\
&\qquad\qquad\qquad\qquad\qquad\qquad\qquad\qquad+\sum\limits_{i\neq j}
\frac{\bigl\|s^{\frac16}\pa_z\ut_i(s)\bigr\|_{L^{\frac{12}5}(\Omega)}^{\frac54}
\bigl\|r^{-\frac13}\ut_j(s)\bigr\|_{L^3(\Omega)}^{\frac54}}
{(t-s)^{\frac{35}{48}}}\,ds\Bigr)^{\frac45}\Bigr\|_{L_T^2}\\
&\lesssim \Bigl\|\int_0^{\frac t2}
\Bigl(\frac{\bigl\|s^{\frac16}r^{\frac16}\widetilde{\nabla}\ot_2 (s)\bigr\|_{L^2}^{\frac54}
\|\ot_1(s)\|_{L^{\frac43}(\Omega)}^{\frac54}}{(t-s)^{\frac{11}{16}}}
+\sum\limits_{i\neq j}
\frac{\bigl\|s^{\frac16}\pa_z\ut_i(s)\bigr\|_{L^{\frac{12}5}(\Omega)}^{\frac54}
\bigl\|r^{-\frac13}\ut_j(s)\bigr\|_{L^3(\Omega)}^{\frac54}}
{(t-s)^{\frac{11}{16}}}\Bigr)\,ds\Bigr\|_{L_T^{\frac85}}^{\frac45}\\
&\lesssim\Bigl( \bigl\|t^{\frac16}r^{\frac16}\widetilde{\nabla}\ot_2\bigr\|_{L_T^2 L^2}
\|\ot_1\|_{L_T^4 L^{\frac43}(\Omega)}
+\sum\limits_{i\neq j}
\bigl\|t^{\frac16}\pa_z\ut_i\bigr\|_{L_T^{\frac{12}5} L^{\frac{12}5}(\Omega)}
\bigl\|r^{-\frac13}\ut_j\bigr\|_{L_T^3 L^3(\Omega)}\Bigr)
\Bigl\|\frac{1}{|\cdot|^{\frac{11}{16}}}\Bigr\|_{L_T^{\frac{16}{11},\infty}}^{\frac45}\\
&\lesssim\|\ot_1\|_{X_T}\|\ot_2\|_{X_T}+\|\ut_1\|_{X_T}\|\ut_2\|_{X_T}.
\end{align*}
While by using \eqref{semi5} with $\gamma=-\epsilon=\frac 23$,
and the fact that $s\thicksim t$ for $s\in[t/2,t]$, we get
\begin{align*}
\Rmnum{3}_2
\lesssim\Bigl\|&\int_{\frac t2}^t
\frac{\bigl\|r^{\frac12}\widetilde{\nabla}\widetilde{u}_1(s)\bigr\|_{L^2(\Omega)}
s^{\frac14}\bigl\|r^{\frac16} \ot_2(s)\bigr\|_{L^{\frac83}(\Omega)}}{(t-s)^{\frac78}s^{\frac{1}{12}}}
+\frac{\bigl\|s^{\frac16}r^{\frac23}\widetilde{\nabla}\ot_2 (s)\bigr\|_{L^2(\Omega)}
\|\widetilde{u}_1(s)\|_{L^4(\Omega)}}{(t-s)^{\frac34}}\\
&+\sum\limits_{1\leq i,j\leq 2,i\neq j}
\frac{\bigl\|s^{\frac16}\pa_z\ut_i(s)\bigr\|_{L^{\frac{12}5}(\Omega)}
\bigl\|r^{-\frac13} \ut_j(s)\bigr\|_{L^3(\Omega)}
}{(t-s)^{\frac34}}\, ds\Bigr\|_{L_T^2}\eqdef\Rmnum{3}_{2,1}
+\Rmnum{3}_{2,2}+\Rmnum{3}_{2,3}.
\end{align*}
In which, we can derive from the Biot-Savart law and
H\"{o}lder's inequality that
$$\bigl\|r^{\frac12}\widetilde{\nabla}\widetilde{u}_1\bigr\|_{L^2(\Omega)}
=\|\widetilde{\nabla}\widetilde{u}_1\bigr\|_{L^2}
\lesssim\|\ot_1\|_{L^2},\quad
s^{\frac14}\bigl\|r^{\frac16}\ot_2\bigr\|_{L^{\frac83}(\Omega)}
\leq \bigl\|s^{\frac13}\ot_2\|_{L^3(\Omega)}^{\frac58}\|\ot_2\|_{L^2}^{\frac13}
(s\|\ot_2\|_{L^\infty})^{\frac1{24}}.$$
As a result, $\Rmnum{3}_{2,1}$ can be estimated as follows:
\begin{align*}
\Rmnum{3}_{2,1}&\lesssim \Bigl\|\bigl(\int_{\frac t2}^t s^{-1}ds\bigr)^{\f{1}{12}}
\bigl(\int_{\frac t2}^t\frac{\|\ot_1(s)\|_{L^2}^{\f{12}{11}}
\bigl\|s^{\frac13}\ot_2(s)\bigr\|_{L^3(\Omega)}^{\frac{15}{22}}
\|\ot_2(s)\|_{L^2}^{\frac4{11}}\bigl(s\|\ot_2(s)\|_{L^\infty}\bigr)^{\frac1{22}}}
{(t-s)^{\frac{21}{22}}}\, ds\bigr)^{\f{11}{12}}\Bigr\|_{L_T^2}\\
&\lesssim \Bigl\|\int_{\frac t2}^t\frac{\|\ot_1(s)\|_{L^2}^{\f{12}{11}}
\bigl\|s^{\frac13}\ot_2(s)\bigr\|_{L^3(\Omega)}^{\frac{15}{22}}
\|\ot_2(s)\|_{L^2}^{\frac4{11}}\bigl(s\|\ot_2(s)\|_{L^\infty}\bigr)^{\frac1{22}}}
{(t-s)^{\frac{21}{22}}}\, ds\Bigr\|_{L_T^{\f{11}6}}^{\f{11}{12}}\\
&\lesssim \|\ot_1\|_{L_T^4L^2}
\bigl\|t^{\frac13}\ot_2\bigr\|_{L_T^3L^3(\Omega)}^{\frac58}
\|\ot_2\|_{L_T^4L^2}^{\frac13}\|t\ot_2\|_{{L_T^\infty}L^\infty}^{\frac1{24}}
\Bigl\|\frac{1}{|\cdot|^{\frac{21}{22}}}\Bigr\|_{L_T^{\frac{22}{21},\infty}}^{\f{11}{12}}\\
&\lesssim\|\ot_1\|_{X_T}\|\ot_2\|_{X_T}.
\end{align*}
And we have
\begin{align*}
\Rmnum{3}_{2,2}+\Rmnum{3}_{2,3}&\lesssim\Bigl( \bigl\|t^{\frac16}r^{\frac16}\widetilde{\nabla}\ot_2\bigr\|_{L^2_TL^2}
\|\ot_1\|_{L^4_TL^{\frac43}(\Omega)}\\
&\qquad+\sum\limits_{1\leq i,j\leq 2,i\neq j}
\bigl\|t^{\frac16}\pa_z\ut_i\bigr\|_{L^{\frac{12}5}_TL^{\frac{12}5}(\Omega)}
\bigl\|r^{-\frac13} \ut_j\bigr\|_{L_T^3L^3(\Omega)}\Bigr)
\Bigl\|\frac{1}{|\cdot|^{\frac34}}\Bigr\|_{L_T^{\frac43,\infty}}\\
&\lesssim\|\ot_1\|_{X_T}\|\ot_2\|_{X_T}+\|\ut_1\|_{X_T}\|\ut_2\|_{X_T}.
\end{align*}
Substituting all the above estimates into \eqref{nablaomegaL42} gives
\begin{equation}\label{bilinear6}
\bigl\|t^{\frac16}r^{\frac16}\widetilde{\nabla}\mathcal{F}^\omega\bigr\|_{L^2_T L^2}
\lesssim\|\ot_1\|_{X_T}\|\ot_2\|_{X_T}+\|\ut_1\|_{X_T}\|\ut_2\|_{X_T}.
\end{equation}

The last term $\bigl\|t^{\frac16}\widetilde{\nabla}
\mathcal{F}^u\bigr\|_{L^{\frac{12}5}_T L^{\frac{12}5}(\Omega)}$
can be handled similarly.
First, we write
\begin{equation}\begin{split}\label{nablau}
&\qquad\qquad\qquad\bigl\|t^{\frac16}\widetilde{\nabla}\mathcal{F}^u\bigr\|
_{L^{\frac{12}5}_T L^{\frac{12}5}(\Omega)}
\leq\Rmnum{4}_1+\Rmnum{4}_2,\quad\text{where}\\
\Rmnum{4}_1&=\Bigl\|t^{\frac16}\int_0^{\frac t2}
\bigl\|\widetilde{\nabla}S(t-s)\bigl(\widetilde{u}_1(s)\cdot\widetilde{\nabla}\ut_2(s)
+\frac{\ur_1(s)\cdot\ut_2(s)}{r}\bigr) \bigr\|_{L^{\frac{12}5}(\Omega)}\, ds\Bigr\|_{L_T^{\frac{12}5}}\\
\Rmnum{4}_2&=\Bigl\|t^{\frac16}\int_{\frac t2}^t
\bigl\|\widetilde{\nabla}S(t-s)\bigl(\widetilde{u}_1(s)\cdot\widetilde{\nabla}\ut_2(s)
+\frac{\ur_1(s)\cdot\ut_2(s)}{r}\bigr) \bigr\|_{L^{\frac{12}5}(\Omega)}\, ds\Bigr\|_{L_T^{\frac{12}5}}.
\end{split}\end{equation}
Then by using \eqref{semi5} with $\gamma=\epsilon=0$
and $\gamma=0,~\epsilon=-\frac25$, we have
$$
\Rmnum{4}_1\lesssim\Bigl\|\int_0^{\frac t2}\frac{\|\widetilde{u}_1\|_{L^4(\Omega)}
\bigl\|s^{\frac16}\widetilde{\nabla} \ut_2\bigr\|_{L^{\frac{12}5}(\Omega)}}
{(t-s)^{\frac7{12}}\cdot s^{\frac16}}
+\frac{\|\ur_1\|_{L^4(\Omega)} \bigl\|r^{-\frac35}\ut_2(s)\bigr\|_{L^{\frac52}(\Omega)}}
{(t-s)^{\frac{23}{30}}}\,ds\Bigr\|_{L_T^{\frac{12}5}}
\eqdef\Rmnum{4}_{1,1}+\Rmnum{4}_{1,2}.$$
In which, the first part can be estimated as follows
\begin{align*}
\Rmnum{4}_{1,1}&\lesssim\Bigl\|\bigl(\int_0^{\frac t2}\bigl(s^{-\frac16}\bigr)^5\,ds\bigr)^{\frac15}
\Bigl(\int_0^{\frac t2}\frac{\|\ot_1(s)\|_{L^{\frac43}(\Omega)}^{\frac54}
 \bigl\|s^{\frac16}\widetilde{\nabla}\ut_2 (s)\bigr\|_{L^{\frac{12}5}(\Omega)}^{\frac54}}
{(t-s)^{\frac{35}{48}}}\,ds\Bigr)^{\frac45}\Bigr\|_{L_T^{\frac{12}5}}\\
&\lesssim\Bigl\|\int_0^{\frac t2}\frac{\|\ot_1(s)\|_{L^{\frac43}(\Omega)}^{\frac54}
 \bigl\|s^{\frac16}\widetilde{\nabla}\ut_2 (s)\bigr\|_{L^{\frac{12}5}(\Omega)}^{\frac54}}
{(t-s)^{\frac{11}{16}}}\,ds\Bigr\|_{L_T^{\frac{48}{25}}}^{\frac45}\\
&\lesssim \|\ot_1\|_{L_T^4L^{\frac43}(\Omega)}
 \bigl\|t^{\frac16}\widetilde{\nabla}\ut_2\bigr\|_{L_T^{\frac{12}5} L^{\frac{12}5}(\Omega)}
\Bigl\|\frac{1}{|\cdot|^{\frac{11}{16}}}\Bigr\|_{L_T^{\frac{16}{11},\infty}}^{\frac45}\\
&\lesssim\|\ot_1\|_{X_T}\|\ut_2\|_{X_T}.
\end{align*}
and the second part can be estimated as
$$\Rmnum{4}_{1,2}\lesssim\|\ur_1\|_{L_T^4L^{4}(\Omega)}
\|r^{-\frac35}\ut_2\|_{L_T^{\frac52}L^{\frac52}(\Omega)}
\Bigl\|\frac{1}{|\cdot|^{\frac {23}{30}}}\Bigr\|_{L_T^{\frac{30}{23},\infty}}
\lesssim\|\ot_1\|_{X_T}\|\ut_2\|_{X_T}.$$
While by using \eqref{semi5} with $\gamma=0,~\epsilon=-\frac4{15}$, we obtain
\begin{align*}
\Rmnum{4}_2&\lesssim
\Bigl\|\int_{\frac t2}^t\frac{\|\widetilde{u}_1\|_{L^4(\Omega)}
\bigl\|s^{\frac16}\widetilde{\nabla} \ut_2\bigr\|_{L^{\frac{12}5}(\Omega)}}
{(t-s)^{\frac34}}
+\frac{\|\ur_1\|_{L^4(\Omega)}s^{\frac16}\|r^{-1}\ut_2(s)\|_{L^2(\Omega)}^{\frac13}
\bigl\|r^{-\frac35}\ut_2(s)\bigr\|_{L^{\frac52}(\Omega)}^{\frac23}}
{(t-s)^{\frac9{10}}}\,ds\Bigr\|_{L_T^{\frac{12}5}}\\
&\lesssim\|\ot_1\|_{L_T^4L^{\frac43}(\Omega)}
 \bigl\|t^{\frac16}\widetilde{\nabla}\ut_2\bigr\|_{L_T^{\frac{12}5} L^{\frac{12}5}(\Omega)}
\Bigl\|\frac{1}{|\cdot|^{\frac 34}}\Bigr\|_{L_T^{\frac43,\infty}}\\
&\qquad+\|\ot_1\|_{L_T^4L^{\frac43}(\Omega)}
\bigl\|t^{\frac12}\|r^{-1}\ut_2\|_{L^{2}(\Omega)}\bigr\|_{L_T^\infty}^{\frac13}
\bigl\|r^{-\frac35}\ut_2(s)\bigr\|_{L^{\frac52}_TL^{\frac52}(\Omega)}^{\frac23}
\Bigl\|\frac{1}{|\cdot|^{\frac9{10}}}\Bigr\|_{L_T^{\frac{10}9,\infty}}\\
&\lesssim\|\ot_1\|_{X_T}\|\ut_2\|_{X_T}.
\end{align*}
Substituting the above three estimates into \eqref{nablau}, gives
\begin{equation}\label{bilinear7}
\bigl\|t^{\frac16}\widetilde{\nabla}\mathcal{F}^u\bigr\|
_{L^{\frac{12}5}_T L^{\frac{12}5}(\Omega)}\lesssim
\|\ot_1\|_{X_T}\|\ut_2\|_{X_T}.
\end{equation}

Combining the estimates \eqref{bilinear1}-\eqref{bilinear3},
~\eqref{bilinear4},~\eqref{bilinear5.5},~\eqref{bilinear6},~\eqref{bilinear7},
finally we close the estimates
that for some universal constant $C_2$, there holds
\begin{equation}\label{estimatebilinear}
\bigl\|\bigl(\mathcal{F}^\omega,\mathcal{F}^u\bigr)
\bigr\|_{X_T}\leq C_2 \|(\ot_1,\ut_1)\|_{X_T}
\|(\ot_2,\ut_2)\|_{X_T}.
\end{equation}

\noindent$\bullet$ \textbf{Fixed-point argument}
By virtue of the estimates \eqref{estimatebilinear},~\eqref{estimatelinear},
Lemma \ref{fixpointtheorem} guarantees that
if $\|\ot_0\|_{L^{\frac32}}$, $\|\ot_0\|_{L^1(\Omega)}$,
$\|\ut_0\|_{L^2(\Omega)}$
are small enough such that
\begin{equation}\label{fixsmallcondi}
C_1\bigl(\|\ot_0\|_{L^{\frac32}}+\|\ot_0\|_{L^1(\Omega)}
+\|\ut_0\|_{L^2(\Omega)}\bigr)\leq \f1{4C_2},
\end{equation}
then \eqref{inteeqt} has a unique global solution in $X_\infty$.
Moreover, this solution remains small so that
\begin{equation*}
\|(\ot,\ut)\|_{X_\infty}\leq 2C_1\bigl(\|\ot_0\|_{L^{\frac32}}
+\|\ot_0\|_{L^1(\Omega)}+\|\ut_0\|_{L^2(\Omega)}\bigr).
\end{equation*}
This completes the proof of this proposition.
\end{proof}

\medskip

\noindent {\bf Acknowledgments.}
The author would like to thank Prof. Thierry Gallay
for his careful reading and some valuable comments.
Liu is supported by NSF of China under grant 12101053,
and the Fundamental Research Funds
for the Central Universities under Grant 310421118.

\medskip

\end{document}